\documentclass[12pt]{amsart}
\usepackage[english]{babel}
\usepackage{a4wide}
\usepackage[T1]{fontenc}
\usepackage{amssymb,amsmath,amsthm,latexsym}
\usepackage{mathrsfs}
\usepackage[usenames,dvipsnames]{color}
\usepackage{graphicx}
\usepackage{mdwlist}
\usepackage{enumerate}
\usepackage{mathtools,dsfont,wasysym}
\usepackage{stmaryrd}
\usepackage{hyperref}
\hypersetup{colorlinks=true,  linkcolor=blue, citecolor=red, urlcolor=cyan}


\newtheorem{theorem}{Theorem}[section]
\newtheorem{lemma}[theorem]{Lemma}

\newtheorem{proposition}[theorem]{Proposition}

\newtheorem{corollary}[theorem]{Corollary}

\theoremstyle{definition}
\newtheorem{definition}[theorem]{Definition}
\newtheorem{example}[theorem]{Example}

\theoremstyle{remark}
\newtheorem{remark}[theorem]{Remark}
\numberwithin{equation}{section}

\newcommand{\R}{\mathbb{R}}
\newcommand{\C}{\mathbb{C}}
\newcommand{\N}{\mathbb{N}}
\newcommand{\K}{\mathbb{K}}

\DeclareMathOperator{\dist}{dist\,}

\DeclareMathOperator{\co}{co}
\DeclareMathOperator{\aco}{aco}

\DeclareMathOperator{\re}{Re}

\DeclareMathOperator{\id}{Id}

\DeclareMathOperator{\Orb}{Orb}

\newcommand{\nn}[1]{{\left\vert\kern-0.25ex\left\vert\kern-0.25ex\left\vert #1 
		\right\vert\kern-0.25ex\right\vert\kern-0.25ex\right\vert}}
\renewcommand{\geq}{\geqslant}
\renewcommand{\leq}{\leqslant}

\newcommand{\norm}[1]{\left\Vert#1\right\Vert}

\newcommand{\NA}{\operatorname{NA}}
\newcommand{\spann}{\operatorname{span}}

\newcommand{\ext}[1]{\operatorname{ext}\left(#1\right)}

\newcommand{\eps}{\varepsilon}

\newcommand{\cconv}{\overline{\co}}

\newcounter{smallromans}

	{\end{list}}

\makeatletter
\setcounter{tocdepth}{3}

\renewcommand{\tocsection}[3]{%
	\indentlabel{\@ifnotempty{#2}{\bfseries\ignorespaces#1 #2\quad}}\bfseries#3}
\renewcommand{\tocsubsection}[3]{%
	\indentlabel{\@ifnotempty{#2}{\ignorespaces#1 #2\quad}}#3}

\newcommand\@dotsep{4.5}
\def\@tocline#1#2#3#4#5#6#7{\relax
	\ifnum #1>\c@tocdepth 
	\else
	\par \addpenalty\@secpenalty\addvspace{#2}%
	\begingroup \hyphenpenalty\@M
	\@ifempty{#4}{%
		\@tempdima\csname r@tocindent\number#1\endcsname\relax
	}{%
		\@tempdima#4\relax
	}%
	\parindent\z@ \leftskip#3\relax \advance\leftskip\@tempdima\relax
	\rightskip\@pnumwidth plus1em \parfillskip-\@pnumwidth
	#5\leavevmode\hskip-\@tempdima{#6}\nobreak
	\leaders\hbox{$\m@th\mkern \@dotsep mu\hbox{.}\mkern \@dotsep mu$}\hfill
	\nobreak
	\hbox to\@pnumwidth{\@tocpagenum{\ifnum#1=1\bfseries\fi#7}}\par
	\nobreak
	\endgroup
	\fi}
\AtBeginDocument{%
	\expandafter\renewcommand\csname r@tocindent0\endcsname{0pt}
}
\def\l@subsection{\@tocline{2}{0pt}{2.5pc}{5pc}{}}
\makeatother


\begin{document}
	
\title[Group invariant operators and norm-attaining theory]{Group invariant operators and some applications to norm-attaining theory}
	
\author[Dantas]{Sheldon Dantas}
\address[Dantas]{Departamento de Análisis Matemático, Facultad de Ciencias Matemáticas, Universidad de Valencia, Doctor Moliner 50, 46100 Burjasot (Valencia), Spain\newline
	\href{http://orcid.org/0000-0001-8117-3760}{ORCID: \texttt{0000-0001-8117-3760} } }
\email{\texttt{sheldon.dantas@uv.es}}

\author[Falcó]{Javier Falcó}
\address[Falcó]{Departamento de Análisis Matemático, Facultad de Ciencias Matemáticas, Universidad de Valencia, Doctor Moliner 50, 46100 Burjasot (Valencia), Spain
	\newline
	\href{http://orcid.org/0000-0003-2240-2855}{ORCID: \texttt{0000-0003-2240-2855} }}
\email{\texttt{francisco.j.falco@uv.es}}

\author[Jung]{Mingu Jung$^1$}
\address[Jung]{School of Mathematics, Korea Institute for Advanced Study, 02455 Seoul, Republic of Korea \newline
\href{http://orcid.org/0000-0003-2240-2855}{ORCID: \texttt{0000-0003-2240-2855} }}
\email{\texttt{jmingoo@kias.re.kr}}

\begin{abstract} 
In this paper, we study geometric properties of the set of group invariant continuous linear operators between Banach spaces. In particular, we present group invariant versions of the Hahn-Banach separation theorems and elementary properties of the invariant operators. This allows us to contextualize our main applications in the theory of norm-attaining operators; we establish group invariant versions of the properties $\alpha$ of Schachermayer and $\beta$ of Lindenstrauss, and present relevant results from this theory in this (much wider) setting. In particular, we generalize Bourgain's result, which says that if $X$ has the Radon-Nikodým property, then $X$ has the $G$-Bishop-Phelps property for $G$-invariant operators whenever $G \subseteq \mathcal{L}(X)$ is a compact group of isometries on $X$.
\end{abstract}

\subjclass[2020]{Primary 46B04; Secondary 47B01, 46B22, 46B25}
\keywords{Bishop-Phelps theorem; Group invariant; Norm-attaining operators; Radon-Nikod\'ym property; separation theorem \\
1: Mingu Jung (jmingoo@kias.re.kr) is the corresponding author. }

\maketitle
	
\tableofcontents

\thispagestyle{plain}

\section{Introduction}

The concept of group invariant mappings has been studied in various forms for over a century, and much longer if we consider the specific case of symmetric mappings, such as symmetric multilinear mappings between Banach spaces, \cite{Dineen, F1, Ryan_thesis}. Despite this, group invariant mappings for a general case in the context of infinite dimensional analysis has not been thoroughly studied until recently (for some results in this direction, we invite the reader to look at the references \cite{AFGM, AFM, AGPZ,F,FGJM}). Our interest here is to study the set of mappings that are invariant under the action of a topological group $G$ of invertible bounded linear operators on a Banach space $X$. The group will be naturally endowed with the relative topology of the Banach space of bounded linear operators.

To be more specific, our focus is on geometric properties of group invariant linear continuous functionals and operators (see Definition \ref{def:G-inv} below). The Hahn-Banach separation theorem and the Bishop-Phelps theorem are, beyond any doubt, two of the most relevant results in the Geometry of Banach Spaces. The first result, which serves as the basis of many geometric and analytic results, provides minimal sufficient conditions for a point to be separated, by a linear continuous functional, from a closed convex set. The second result shows that the set of norm-attaining continuous linear functionals is always norm dense in the dual space, which has been an important tool in optimization theory. 
For deep discussions on these topics we recommend \cite{BP, FHHMZ}.


The purpose of this paper is twofold: on the one hand, we consider group invariant separation theorems motivated by the recent paper \cite{F}, where a version of the Hahn-Banach extension theorem for group invariant functionals was provided (see also \cite{AFM}); on the other hand, we consider the validity of relevant results in the theory of norm-attaining operators with the additional assumption that the involved operators are also group invariant. For background on the topic of norm-attaining operators, we refer the reader to the survey \cite{acosta_survey} and the references there in.

We continue this introduction by presenting the notation and main definitions we use in this work. For us, $X$ and $Y$ will always denote Banach spaces over the set $\K$ of real or complex numbers unless otherwise stated. We denote by $X^*$, $B_X$, and $S_X$ the dual space, the unit ball, and the unit sphere of the Banach space $X$. The set $\mathcal{L}(X, Y)$ denotes the set of all bounded linear operators from $X$ into $Y$. In the particular case that $X=Y$, we simply write $\mathcal{L}(X)$. The letter $G$ is reserved to denote a topological group of invertible bounded linear mappings on $X$ endowed with the relative topology of $\mathcal{L}(X)$.

\begin{definition} \label{def:G-inv} Let $X, Y$ be Banach spaces and let $G\subset \mathcal L(X)$ be a topological group. We say that 

\begin{itemize}
		\setlength\itemsep{0.3em}
	\item[(a)] $T \in \mathcal{L}(X, Y)$ is \emph{$G$-invariant} whenever $T(g(x)) = T(x)$ for every $x \in X$ and $g \in G$,
	   	
	\item[(b)] $K \subseteq X$ is \emph{$G$-invariant} whenever $g(K) \subseteq K$ for every $g \in G$, and 	   	
	\item[(c)] $x \in X$ is \emph{$G$-invariant} whenever the singleton $\{x\}$ is $G$-invariant. 
\end{itemize}
\end{definition} 
By $X_G$ we denote the set of all points $x \in X$ such that $x$ is $G$-invariant. Let us denote by $\mathcal{L}_G (X, Y)$ the set of all $G$-invariant operators from $X$ into $Y$, that is, 
\begin{equation*} 
\mathcal{L}_G(X, Y) := \{T \in \mathcal{L}(X, Y): T \ \mbox{is} \ G \mbox{-invariant} \}.
\end{equation*} 
We define the {\it orbit} of a point $x \in X$ as the set $\Orb(x) = \{g(x): g \in G\}$. It is worth mentioning that the action of a group of permutations $G$ on $\mathbb N$ divides the natural numbers in pairwise disjoint sets (the orbits of the points). We denote this partition of the natural numbers by $\Orb(G,\mathbb N)$.

A classical and elementary example of a space $\mathcal{L}_G (X, Y)$ is the set of symmetric operators from $X=\mathbb R^n$ to $Y=\mathbb R$. Indeed, in this case, 
\begin{equation*}
G = \left \{ \tilde\sigma\in \mathcal L(\mathbb R^n): \sigma\in\Sigma_n \ \ \mbox{and} \ \ \tilde\sigma(x_1,\ldots,x_n)=(x_{\sigma(1)},\ldots,x_{\sigma(n)}) \right \}, 
\end{equation*} 
where $\Sigma_n$ is the group of all permutations on the set of natural numbers $\N = \{1,\ldots,n\}$. Thus, the elements in $\mathcal{L}_G (\mathbb R^n, \mathbb R)$ are operators in $\mathcal{L} (\mathbb R^n, \mathbb R)$ that satisfy $T(x_1,\ldots,x_n)=T(x_{\sigma(1)},\ldots,x_{\sigma(n)})$ for every $(x_1,\ldots,x_n)\in \mathbb R^n$ and the elements in $X_G$ are the points of the form $(x,x,\ldots,x)$ for some $x\in\mathbb R$.

Before we continue, let us set the notation of the norm attaining operators. Recall that $T \in \mathcal{L}(X, Y)$ {\it attains its norm} or it is {\it norm-attaining} if there exists $x_0 \in B_X$ such that $\|T(x_0)\| = \|T\| = \sup_{x \in B_X} \|T(x)\|$. We denote by $\NA(X, Y)$ the set of all operators which are norm-attaining. When $X=Y$, we simply write $\NA(X, X) = \NA(X)$. Moreover, the set of all $G$-invariant norm-attaining operators from $X$ into $Y$ will be denoted by $\NA_G(X, Y)$.

Throughout this paper, all groups are assumed to be compatible with the norm of a given Banach space, that is, given a Banach space $(X, \norm{\cdot})$ and a group $G$ in $\mathcal{L} (X)$, the norm $\norm{\cdot}$ of $X$ is $G$-invariant. In other words, $G$ consists of isometric isomorphisms on $X$. Although it might seem restrictive at first sight, this is a natural and necessary condition when considering norm attaining group invariant operators.

The contents of the paper are as follows. Section \ref{sec:prop} is devoted to present some properties of $G$-invariant mappings. In particular, we consider the behavior of $G$-invariant mappings and $G$-invariant sets with respect to the operation of taking limits and symmetrising. These properties will be used in Sections \ref{sec:sep} and \ref{sec:NA}, where we present the main results of the manuscript. More specifically, in Section \ref{sec:sep}, we consider separation results related to the Hahn-Banach separation theorems; in Section \ref{sec:NA}, we extend some results from the theory of norm-attaining operators to the context of $G$-invariant mappings: we start by considering some of the classical counterexamples in the theory in order to see whether these counterexamples  in the norm attaining theory remain valid or not depending on the specific choice of the group $G$. After that, we provide the analogues to properties $\alpha$ of Schachermayer \cite{S} and $\beta$ of Lindenstrauss \cite{L}, as well as their generalizations; properties quasi-$\alpha$ and quasi-$\beta$ (see \cite{CS}). To conclude, we define and study the $G$-Bishop-Phelps property, which is a $G$-invariant version of the Bishop-Phelps property introduced by Bourgain \cite{B}, and show that the Radon-Nikod\'ym property is sufficient to ensure the $G$-Bishop-Phelps property, provided that the group $G$ is compact.

\section{Some properties of $G$-invariant mappings}\label{sec:prop}

In this section, we provide straightforward but essential basic facts on group invariant operators. They will be the key to prove the results in the following sections. We start with the following property whose proof can be obtained directly from the definition of the $G$-invariance of an operator (see Definition \ref{def:G-inv} above).

\begin{proposition} \label{lemma-convergence} Let $X, Y$ be Banach spaces. Let $G \subseteq \mathcal{L}(X)$ be a group of isometries. Suppose that $(T_n)_{n=1}^{\infty} \subseteq \mathcal{L}(X, Y)$ is a sequence which converges (in the strong operator topology) to $T_{\infty}$. If $T_n$ is $G$-invariant for every $n \in \N$, then so is $T_{\infty}$. 
\end{proposition}

Let $G \subseteq \mathcal{L}(X)$ be a compact group of isometries. Given $x \in X$, the {\it $G$-symmetrization} of $x$ is the point $\overline{x} \in X$ given by 
\begin{equation*} 
\overline{x} := \int_G g(x) \, d\mu(g), 
\end{equation*} 
where $\mu$ is the normalized Haar measure on the group $G$, for more details see \cite{AGPZ} and \cite[Chapter 7]{ CLO}. Notice that if $x \in X$ is already $G$-invariant, then $\overline{x} = x$. This, in particular, implies that the mapping $x \mapsto \overline{x}$ is a norm-one projection on $X$ with range $X_G$, that is, $X_G$ is a one-complemented subspace of $X$. When it comes to closed convex sets, we have the following result. We denote by $\co(A)$ and $\overline{\co}(A)$ the convex hull and the closed convex hull for a set $A$, respectively.

\begin{proposition} \label{lemma1} Let $X$ be a Banach space and $C$ be a closed convex subset of $X$. Let $G$ be a compact group of $\mathcal{L}(X)$ such that $C$ is $G$-invariant. Then, for $z \in C$, its symmetrization $\overline{z}$ belongs to $C$.
\end{proposition}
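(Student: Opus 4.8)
The plan is to realize the symmetrization $\overline{z} = \int_G g(z)\, d\mu(g)$ as a limit of convex combinations of the points $g(z)$, each of which lies in $C$ because $C$ is $G$-invariant (so $g(z) \in g(C) = C$), and then invoke the fact that $C$ is closed and convex. Concretely, I would use the standard fact that the Bochner integral of a continuous function $g \mapsto g(z)$ on the compact group $G$ (with respect to the normalized Haar measure $\mu$) can be approximated in norm by Riemann-type sums $\sum_{i} \mu(A_i)\, g_i(z)$, where $\{A_i\}$ is a finite measurable partition of $G$, $g_i \in A_i$, and the partition is chosen fine enough that $g \mapsto g(z)$ varies little on each $A_i$. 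Since $\mu$ is a probability measure, the coefficients $\mu(A_i)$ are nonnegative and sum to $1$, so each such sum is a convex combination of the points $g_i(z) \in C$, hence lies in $C$.

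The key steps, in order, are: (1) observe that $g \mapsto g(z)$ is a norm-continuous map from the compact group $G$ into $X$, so its range is norm-compact and the Bochner integral defining $\overline{z}$ exists; (2) fix $\eps > 0$ and use (norm-)uniform continuity together with compactness of $G$ to produce a finite Borel partition $G = \bigcup_i A_i$ on each piece of which $\|g(z) - g_i(z)\| < \eps$ for a chosen $g_i \in A_i$; (3) estimate $\bigl\| \overline{z} - \sum_i \mu(A_i) g_i(z) \bigr\| \leq \sum_i \int_{A_i} \|g(z) - g_i(z)\|\, d\mu(g) < \eps$; (4) note $\sum_i \mu(A_i) g_i(z) \in C$ by convexity and $G$-invariance of $C$; (5) let $\eps \to 0$ and use that $C$ is closed to conclude $\overline{z} \in C$.

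An alternative, slightly slicker route avoids partitions entirely: recall that for a probability measure $\mu$ and a $\mu$-integrable function $f$ with values in a Banach space, the Bochner integral $\int f\, d\mu$ always lies in the closed convex hull $\overline{\co}\,(\{f(g) : g \in G\})$ of the (essential) range of $f$. Applying this with $f(g) = g(z)$ gives $\overline{z} \in \overline{\co}\,\{g(z) : g \in G\} \subseteq \overline{\co}\, (C) = C$, the last equality because $C$ is closed and convex. If this containment is taken as known, the proof is essentially one line; otherwise one proves it exactly via steps (1)–(5) above.

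The main obstacle is purely a matter of how much measure-theoretic machinery one wishes to treat as given: one needs that $g \mapsto g(z)$ is Bochner integrable (immediate here, since it is norm-continuous on a compact space, hence has separable norm-compact range and is strongly measurable) and that its integral can be approximated by convex combinations of its values — the "barycenter in the closed convex hull" fact. Both are classical; the only mild subtlety is checking that the relevant approximating sums can genuinely be taken as \emph{convex} combinations, which is exactly where normalization of the Haar measure ($\mu(G) = 1$) is used. No delicate topology on $G$ beyond compactness and the norm-continuity of the action is required.
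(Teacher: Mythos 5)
Your proposal is correct and follows essentially the same route as the paper: the paper also approximates the Bochner integral $\int_G g(z)\,d\mu(g)$ by a finite sum $\sum_i \mu(E_i)\,g_i(z)$ built from a $\delta$-net of $G$ (a disjointified partition on which $g\mapsto g(z)$ varies little), observes that this sum is a convex combination of points of $C$, and concludes by closedness of $C$. Your alternative one-line remark via the barycenter-in-closed-convex-hull fact is a valid shortcut, but the partition argument is exactly what the paper writes out.
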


\begin{proof} Let $z \in C$ be given. Notice that 
	\begin{equation*}
		\overline{z} = \int_G f_z(g) d \mu(g)
	\end{equation*}
	for $f_z: G \rightarrow X$ is given by $f_z(g) = g(z)$, where we are considering the Bochner integral and $\mu$ is the normalized Haar measure on $G$. For $\delta > 0$, pick  $g_1, \ldots, g_N$ of $G$ and set $U_i:= \{ r \in G : \| r - g_i\| < \delta\}$ for each $i =1,\ldots, N$, that is a $\delta$-covering of $G$. Define $E_1 := U_1$ and $E_i := U_i \setminus \bigcup_{j=1}^{i-1} U_j$ for $i = 2, \ldots, N$. Notice that if we define $h: G \rightarrow X$ by 
	\begin{equation*}
		h(g) := \sum_{i=1}^N g_i(z) \chi_{E_i}(g),
	\end{equation*} 	
	then $\int_G h(g) d \mu(g) \rightarrow \int_G f_z(g) d \mu(g)$ as $\delta \rightarrow 0$. As
	\begin{equation*}
		\int_G h(g) d \mu(g) = \sum_{i=1}^N g_i(z) \mu(E_i) \in \co(C) = C, 
	\end{equation*} 	
	we have that $\overline{z} \in \overline{C} = C$.
\end{proof}

Given a compact group $G \subseteq \mathcal{L}(X)$ of isometries and $T \in \mathcal{L}(X,Y)$, the {\it $G$-symmetrization} of $T \in \mathcal{L} (X, Y)$ is given by 
\begin{equation} \label{symmetrization} 
\overline{T} (x):= \int_G T(g(x)) \, d\mu(g),
\end{equation} 
where $\mu$ is the normalized Haar measure on $G$. It is clear that $\overline{T}$ is $G$-invariant and the inequality $\|\overline{T}\| \leq \|T\|$ holds true in general. This implies that, as for the case of $G$-invariant points, the mapping $T \mapsto \overline{T}$ is a norm-one linear projection with range $\mathcal{L}_G (X, Y)$ and, therefore, $\mathcal{L}_G (X, Y)$ is a one-complemented subspace of $\mathcal{L} (X, Y)$. Furthermore, it turns out that $B_{X_G}$ is norming for the space $\mathcal{L}_G(X, Y)$ according to the next result.


\begin{proposition} \label{norming} Let $X, Y$ be Banach spaces. Let $G \subseteq \mathcal{L}(X)$ be a compact group of isometries. Then, $B_{X_G}$ is norming for $\{ T \in \mathcal{L} (X,Y): \|T \| = \| \overline{T} \|\}$. In particular, $B_{X_G}$ is norming for $\mathcal{L}_G (X, Y)$. 
\end{proposition}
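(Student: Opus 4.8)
The plan is to show that for any $T$ with $\|T\| = \|\overline{T}\|$ and any $\varepsilon > 0$, there is a point $x$ in $B_{X_G}$ with $\|T(x)\| > \|T\| - \varepsilon$. Since $\|\overline{T}\| = \|T\|$, I can pick $x_0 \in B_X$ with $\|\overline{T}(x_0)\| > \|T\| - \varepsilon$. The natural candidate for the norming point in $B_{X_G}$ is the symmetrization $\overline{x_0} = \int_G g(x_0)\, d\mu(g)$, which lies in $B_{X_G}$ because the symmetrization map is a norm-one projection onto $X_G$ (as recalled in the excerpt). So the crux is to relate $T(\overline{x_0})$ with $\overline{T}(x_0)$.

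The key identity is that $T(\overline{x_0}) = \overline{T}(x_0)$ for every $x_0 \in X$. To see this, note $T(\overline{x_0}) = T\!\left(\int_G g(x_0)\, d\mu(g)\right) = \int_G T(g(x_0))\, d\mu(g) = \overline{T}(x_0)$, where the middle equality is the standard fact that a bounded linear operator commutes with the Bochner integral. Hence $\|T(\overline{x_0})\| = \|\overline{T}(x_0)\| > \|T\| - \varepsilon$, and since $\overline{x_0} \in B_{X_G}$, this shows $B_{X_G}$ is norming for $\{T \in \mathcal{L}(X,Y) : \|T\| = \|\overline{T}\|\}$. The ``in particular'' statement follows because every $G$-invariant operator $T$ satisfies $\overline{T} = T$ (the symmetrization of an already invariant operator is itself), so $\mathcal{L}_G(X,Y)$ is contained in the set $\{T : \|T\| = \|\overline{T}\|\}$.

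I do not anticipate a serious obstacle here; the only point requiring a little care is the interchange of $T$ with the Bochner integral, which is justified because $T$ is bounded and linear and $g \mapsto g(x_0)$ is Bochner integrable (the integrand is continuous on the compact group $G$ with values in a separable subspace, or one argues via the approximation by simple functions exactly as in the proof of Proposition~\ref{lemma1}). One should also verify $\overline{x_0} \in B_{X_G}$: it lies in $X_G$ since it is a fixed point of the projection $x \mapsto \overline{x}$, and $\|\overline{x_0}\| \le \int_G \|g(x_0)\|\, d\mu(g) = \|x_0\| \le 1$ using that the norm is $G$-invariant. Assembling these observations gives the result.
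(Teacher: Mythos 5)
Your proposal is correct and follows essentially the same route as the paper: both take a near-maximizer $x_0 \in B_X$ for $\overline{T}$ and pass to its symmetrization $\overline{x}_0 \in B_{X_G}$, the whole point being the identity $T(\overline{x}_0) = \overline{T}(x_0)$. The paper reaches that identity by computing $\overline{T}(\overline{x}_0)$ in two ways (using the $G$-invariance of $\overline{T}$ and of $\overline{x}_0$), whereas you commute $T$ directly with the Bochner integral; this is only a cosmetic difference.
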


\begin{proof} Let $T \in \mathcal{L} (X, Y)$ be such that $\|T \| = \|\overline{T} \|$ and $\eps >0$ be given. Pick $x_0 \in B_X$ to be such that $\|\overline{T} (x_0)\| > \|\overline{T}\| - \eps$. By Proposition \ref{lemma1} (or simply by definition), the symmetrization $\overline{x}_0$ of $x_0$ satisfies that $\|\overline{x}_0\| \leq \|x_0\|$. Observe that (we send the reader to equation (\ref{symmetrization}) and the comments around it for more information about $\overline{T}(\overline{x_0})$ that we use in what follows)
	\begin{align*}
		\overline{T}(\overline{x}_0) = \int_G (\overline{T} \circ g)(x_0) \, d \mu(g) = \overline{T} (x_0) \,\text{ and }\, \overline{T}(\overline{x}_0) = \int_G T (g(\overline{x}_0)) \, d\mu = T(\overline{x}_0). 
	\end{align*}
	Therefore,
	\[
		\|T(\overline{x}_0)\| = \left\| \overline{T} (\overline{x}_0) \right\| = \|\overline{T} (x_0) \| > \|T\| -\eps. 
	\]
\end{proof}

We conclude this section by discussing some basic facts on the adjoint of linear operators. These facts will be used in Subsection \ref{sectionadj}. 
Notice that whenever $G \subseteq \mathcal{L}(X)$ is a group, so is $G^{**} := \{ g^{**} \in \mathcal{L} (X^{**}) : g \in G\}$. In fact, we have the following characterization that shows, in particular, that an operator is $G$-invariant if and only if its second adjoint is $G^{**}$-invariant.

\begin{proposition} \label{lemma-caract-dual-bidual} Let $X, Y$ be Banach spaces. Let $G \subseteq \mathcal{L}(X)$ be a group of isometries and $T \in \mathcal{L}(X, Y)$. The following statements are equivalent.
	\begin{itemize}
			\setlength\itemsep{0.3em}
		\item[(a)] $T$ is $G$-invariant,
		\item[(b)] $T^*y^*$ is $G$-invariant for every $y^* \in Y^*$, and
		\item[(c)] $T^{**}$ is $G^{**}$-invariant. 
	\end{itemize}
\end{proposition}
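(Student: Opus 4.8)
The plan is to prove the chain of equivalences $(a) \Rightarrow (b) \Rightarrow (c) \Rightarrow (a)$, each step being a direct unwinding of definitions together with the standard identity relating adjoints to the duality pairing.

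First I would show $(a) \Rightarrow (b)$. Assume $T$ is $G$-invariant. Fix $y^* \in Y^*$ and $g \in G$. For every $x \in X$ we compute, using the definition of the adjoint,
\begin{equation*}
\langle T^*y^*, g(x) \rangle = \langle y^*, T(g(x)) \rangle = \langle y^*, T(x) \rangle = \langle T^*y^*, x \rangle,
\end{equation*}
where the middle equality is exactly the $G$-invariance of $T$. Since this holds for all $x$, the functional $T^*y^*$ satisfies $(T^*y^*)\circ g = T^*y^*$, i.e. $T^*y^*$ is $G$-invariant (here I am using that a functional $x^*$ is $G$-invariant precisely when $x^* \circ g = x^*$ for all $g \in G$, which follows from the definition applied with $Y = \K$).

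Next, $(b) \Rightarrow (c)$. Recall $G^{**} = \{g^{**} : g \in G\}$ and that $(T \circ g)^{**} = T^{**} \circ g^{**}$ for any bounded operators, which one checks on the pairing: for $x^{**} \in X^{**}$ and $y^* \in Y^*$,
\begin{equation*}
\langle T^{**}(g^{**}(x^{**})), y^* \rangle = \langle g^{**}(x^{**}), T^*y^* \rangle = \langle x^{**}, g^*(T^*y^*) \rangle = \langle x^{**}, (T g)^* y^* \rangle.
\end{equation*}
Now hypothesis $(b)$ says $T^*y^* = g^*(T^*y^*)$ for every $y^*$ and every $g$ — indeed $G$-invariance of the functional $T^*y^*$ means $T^*y^* \circ g = T^*y^*$, and $T^*y^* \circ g = g^*(T^*y^*)$ by definition of $g^*$. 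Substituting, the right-hand side equals $\langle x^{**}, T^*y^* \rangle = \langle T^{**}x^{**}, y^* \rangle$. Since $y^* \in Y^*$ is arbitrary, $T^{**}(g^{**}(x^{**})) = T^{**}(x^{**})$ for all $x^{**}$ and all $g \in G$, which is $G^{**}$-invariance of $T^{**}$.

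Finally, $(c) \Rightarrow (a)$: if $T^{**}$ is $G^{**}$-invariant, restrict to the canonical image of $X$ in $X^{**}$. Using $T^{**}|_X = T$ and $g^{**}|_X = g$ (under the canonical embedding), for $x \in X$ we get $T(g(x)) = T^{**}(g^{**}(x)) = T^{**}(x) = T(x)$, so $T$ is $G$-invariant. I do not expect any real obstacle here; the only point requiring a moment's care is bookkeeping the two reformulations used freely above — that a scalar functional $x^*$ is $G$-invariant in the sense of the Definition iff $x^* \circ g = x^*$ for all $g$, and that $g^*(x^*) = x^* \circ g$ — both of which are immediate from the definitions of $G$-invariance and of the adjoint. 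One might remark that the equivalence $(a)\Leftrightarrow(c)$ in particular needs no compactness of $G$, only that it is a group of bounded operators.
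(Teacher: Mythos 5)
Your proof is correct, and at the level of content it is the same routine unwinding of the adjoint identities that the paper uses; the difference is organizational but worth a remark. The paper proves the four implications (a)$\Rightarrow$(b), (a)$\Rightarrow$(c), (b)$\Rightarrow$(a), (c)$\Rightarrow$(a), and in its proof of (a)$\Rightarrow$(c) it first establishes $T^{**}\circ g^{**}=T^{**}$ only on the canonical copy of $X$ and then extends to all of $X^{**}$ by invoking $w^*$-continuity of $T^{**}$ (together, implicitly, with $w^*$-density of $B_X$ in $B_{X^{**}}$). You instead close the cycle (a)$\Rightarrow$(b)$\Rightarrow$(c)$\Rightarrow$(a), and your step (b)$\Rightarrow$(c) works directly with an arbitrary $x^{**}\in X^{**}$ through the identity $(T\circ g)^{**}=T^{**}\circ g^{**}$ and the relation $g^*(T^*y^*)=(T^*y^*)\circ g$; this makes the passage to the bidual purely algebraic and avoids the continuity/density argument altogether, which is a small but genuine simplification. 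Your (c)$\Rightarrow$(a) by restriction to $X$ is also slightly more direct than the paper's, which re-evaluates against $y^*$ and appeals to Hahn--Banach in the companion implication (b)$\Rightarrow$(a) (a step you do not need, since you do not prove that implication separately). Both routes are elementary and neither uses any compactness of $G$, as you correctly observe.
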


\begin{proof} {\it (a) $\Rightarrow$ (b)}. Suppose that $T$ is $G$-invariant. Let $y^* \in Y^*$ be given. For every $x \in X$ and $g \in G$, we have that $(T^*y^*)(g(x)) = y^*(T(g(x))) = y^*(T(x)) = (T^* y^*)(x)$.

	\noindent 
	{\it (a) $\Rightarrow$ (c)}. Suppose that $T$ is $G$-invariant. Let $x \in X$ and $g^{**} \in G^{**}$. Then, for every $y^* \in Y^*$, we have $[(T^{**} \circ g^{**} )(x)](y^*) = g(x)(T^*(y^*)) = T^*(y^*)(g(x)) = [(T \circ g)(x)](y^*) = (T(x))(y^*)$. This shows that $(T^{**} \circ g^{**} ) (x) = T(x)$ for every $x \in X$ and, consequently, that $T^{**} \circ g^{**} = T^{**}$ on $X$. By the $w^*$-continuity of $T^{**}$, we have the $T^{**} \circ g^{**}  = T^{**}$ on $X^{**}$ for every $g^{**} \in G^{**}$.

	\noindent
	{\it (b) $\Rightarrow$ (a)}. Indeed, for every $x \in X, y^* \in Y^*$, and $g \in G$, we have that $y^*(T(g(x))) = (T^*y^*)(g(x)) = (T^*y^*)(x) = y^*(T(x))$. By the Hahn-Banach theorem, $T(g(x)) = T(x)$ for every $g \in G$ and every $x \in X$.

	\noindent
	{\it (c) $\Rightarrow$ (a)}. Let $x \in X, y^* \in Y^*$, and $g \in G$. We have that $y^*(T(g(x))) = (T^* y^*)(g(x)) = T^{**}(g(x))(y^*) = T^{**}(x)(y^*) = y^*(T(x))$.	This implies, again by the Hahn-Banach theorem, that $T(g(x)) = T(x)$ for every $g \in G$ and every $x \in X$.	
\end{proof}

\section{Group invariant separation theorems}\label{sec:sep}

In \cite{F}, a version of the Hahn-Banach extension theorem for group invariant mappings was considered and it played an important role in the proof of a group invariant Bishop-Phelps theorem (see \cite[Theorem 3]{F}). With the aid of this extension theorem, we can obtain the following versions of the Hahn-Banach separation theorems for group invariant mappings under the assumption, as in \cite[Proposition 1]{F}, that the given group is {\it compact}. 

\begin{theorem}[Hahn-Banach separation theorem] \label{HB-separation} Let $X$ be a Banach space and $G \subseteq \mathcal{L}(X)$ be a compact group of isometries. If $C$ is a closed convex $G$-invariant subset of $X$ and $x_0$ is a $G$-invariant point such that $x_0 \not\in C$, then there is $x^* \in X^*$ which is $G$-invariant such that
	\begin{equation*} 
		\re x^* (x_0) > \sup \{ \re x^* (x): x \in C\}.
	\end{equation*} 
\end{theorem}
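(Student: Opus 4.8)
The plan is to reduce the $G$-invariant separation theorem to the classical Hahn–Banach separation theorem applied on the one-complemented subspace $X_G$, and then to lift the resulting functional back to $X$ while preserving $G$-invariance via symmetrization. First I would recall that, since $G$ is compact, the symmetrization map $P\colon x\mapsto \overline{x}=\int_G g(x)\,d\mu(g)$ is a norm-one linear projection of $X$ onto $X_G$ (as noted right after Property~\ref{lemma-convergence}), so $X_G$ is a closed subspace of $X$. The set $C\cap X_G$ is then a closed convex subset of the Banach space $X_G$, and by hypothesis $x_0\in X_G\setminus C$, hence $x_0\notin C\cap X_G$. Applying the ordinary Hahn–Banach separation theorem (the point/closed-convex-set version) inside $X_G$ yields a functional $y^*\in (X_G)^*$ with $\re y^*(x_0)>\sup\{\re y^*(y): y\in C\cap X_G\}$.

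The next step is to promote this inequality from $C\cap X_G$ to all of $C$. The key observation is that by Proposition~\ref{lemma1}, since $C$ is $G$-invariant, closed and convex, the symmetrization $\overline{z}=P(z)$ lies in $C$ for every $z\in C$; and of course $\overline{z}\in X_G$, so in fact $\overline{z}\in C\cap X_G$. Therefore, setting $x^*:=y^*\circ P\in X^*$, we have for every $z\in C$ that $\re x^*(z)=\re y^*(\overline{z})\le \sup\{\re y^*(y): y\in C\cap X_G\}<\re y^*(x_0)=\re y^*(P x_0)=\re x^*(x_0)$, where the last equalities use $Px_0=x_0$ because $x_0$ is $G$-invariant. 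This gives exactly the desired strict separation.

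It remains to check that $x^*=y^*\circ P$ is itself $G$-invariant, i.e. that $x^*(g(x))=x^*(x)$ for all $g\in G$, $x\in X$. This follows from the identity $P\circ g = P$ on $X$, which in turn comes from the translation-invariance of the Haar measure: $\int_G h(g(x))\,d\mu(h)=\int_G h(x)\,d\mu(h)$. Hence $x^*(g(x))=y^*(P(g(x)))=y^*(P(x))=x^*(x)$, so $x^*$ is $G$-invariant as required. (Alternatively, one could invoke the group-invariant Hahn–Banach extension theorem from \cite{F} directly, extending $y^*$ from $X_G$ to a $G$-invariant functional on $X$, but the explicit formula $y^*\circ P$ is cleaner and self-contained here.)

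I do not expect a serious obstacle: the whole argument is a matter of assembling the pieces already established in Section~\ref{sec:prop} (the projection $P$ onto $X_G$, Proposition~\ref{lemma1} that $C$ absorbs symmetrizations) together with the classical separation theorem. The one point that deserves a word of care is ensuring $C\cap X_G\ne\emptyset$ so that the supremum is not over the empty set — but Proposition~\ref{lemma1} guarantees this as soon as $C\ne\emptyset$ (and if $C=\emptyset$ the statement is vacuous or trivial), since $\overline{z}\in C\cap X_G$ for any $z\in C$. A secondary subtlety is the real-versus-complex scalar issue in the separation theorem, but using the standard real-linear-functional form and then the usual complexification trick $x^*\mapsto x^*(\cdot)-i x^*(i\,\cdot)$ handles this, and symmetrization commutes with this operation since each $g\in G$ is complex-linear.
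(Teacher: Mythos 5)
Your proof is correct. It is close in spirit to the paper's second (``alternative'') proof, which also rests on Haar-measure averaging, but you take a slightly different route: you first restrict the whole problem to the fixed-point subspace $X_G$, apply the classical separation theorem there to separate $x_0$ from $C\cap X_G$, and then lift the functional back as $y^*\circ P$, whereas the paper separates $x_0$ from all of $C$ in the ambient space $X$ and then symmetrizes the resulting functional $x_0^*$. The two mechanisms are essentially dual to one another, since the $G$-symmetrization of a functional $x_0^*\in X^*$ is exactly $x_0^*\circ P$; the difference is where the classical theorem is invoked. Your version makes explicit that the statement reduces entirely to the one-complemented subspace $X_G$, at the price of needing Proposition~\ref{lemma1} (that $P(C)\subseteq C$, hence $C\cap X_G\neq\emptyset$) to control $\sup_C \re x^*$; the paper's alternative proof gets the corresponding inequality $\sup_C\re x^*\leq\sup_C\re x_0^*$ directly from the pointwise $G$-invariance of $C$ under the integral, without invoking Proposition~\ref{lemma1} or the subspace $X_G$ at all. (The paper's first proof is different again: a from-scratch Minkowski-functional argument using the group-invariant Hahn--Banach \emph{extension} theorem of \cite{F}.) Your handling of the edge case $C\cap X_G=\emptyset$, of the $G$-invariance of $y^*\circ P$ via bi-invariance of the Haar measure, and of the complexification step are all sound.
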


\begin{proof} Let us first assume that $X$ is a real space. Given $x \in C$, by Proposition \ref{lemma1}, its symmetrization $\overline{x}$ belongs to $C$. By considering the set $C - \overline{x}$ (which is again a $G$-invariant set) and the point $x_0 - \overline{x}$, we may assume that $0 \in C$. Let
	\begin{equation*}
		\delta:= \dist(x_0, C) > 0 \ \ \ \ \ \mbox{and} \ \ \ \ \ D:= \left\{ x \in X: \dist(x, C) \leq \frac{\delta}{2} \right\}.
	\end{equation*}
	We claim that $D$ is $G$-invariant. 
	Assume to the contrary that it is not. In this case, there are $x \in D$ and $g \in G$ such that $\dist(g(x), C) > r > \frac{\delta}{2}$. Since $\norm{\cdot}$ is $G$-invariant, we then have that $\|x - g^{-1}(y)\| = \|g(x) - y\| > r$ for every $y \in C$.
	As $C$ is $G$-invariant, we get that $\dist(x, C) \geq r > \frac{\delta}{2}$, which yields a contradiction.
	
	\vspace{0.2cm}
		
	Now, let $\mu_D$ be the Minkowski functional of $D$, that is, $\mu_D(x) = \inf \{ \lambda > 0: x \in \lambda D\}$. Notice that if $x \in \lambda D$, then $g(x) \in \lambda D$ since $D$ is $G$-invariant. This implies that $\mu_D(g(x)) \leq \mu_D(x)$. Symmetrically, we have that $\mu_D(x) \leq \mu_D(g(x))$. Hence, $\mu_D$ is a $G$-invariant functional. Moreover, notice that since $D$ is closed and $x_0 \not\in D$, we have that $\mu_D(x_0) > 1$. Define a linear functional $y^*$ on $\spann \{x_0\}$ by $y^* (\lambda x_0) := \lambda \mu_D(x_0)$. Notice that $\spann \{x_0\}$ is $G$-invariant and $y^* $ is also $G$-invariant. Furthermore, $y^* (\lambda x_0) \leq \mu_D(\lambda x_0)$. By \cite[Proposition 1]{F}, there exists a $G$-invariant continuous linear functional $x^*$ on $X$ such that $x^*=y^*$ on $\spann\{x_0\}$ and $x^* (x) \leq \mu_D(x)$ for every $x \in X$. Finally, notice that that 
	\begin{equation*}
		x^*(x_0) = y^* (x_0) = \mu_D(x_0) > 1 \geq \mu_D(x) \geq x^* (x)
	\end{equation*}
	for every $x \in C$. 
	
	If $X$ is a complex space, then we construct $g \in X_{\mathbb{R}}^*$ as in the real case (where ${X_{\mathbb{R}}}$ denotes the real part of $X$) and define $f \in X^*$ by $f(x)=g(x) - i g(ix)$ for every $x \in X$. Then $f$ satisfies the desired result.
\end{proof} 

It is worth remarking that the above proof is based on the standard proof of the classical Hahn-Banach separation theorem (see, for instance, \cite[Theorem 2.12]{FHHMZ}). We present a shorter alternative proof for this result by making use of symmetrizations.

\begin{proof}[Alternative proof for Theorem \ref{HB-separation}]
Due to the classical Hahn-Banach separation theorem, there exists $x_0^* \in X^*$ such that 
\[
\re x_0^* (x_0) > \sup \{ \re x_0^* (x): x \in C\}.
\]
If we denote by $x^*$ the $G$-symmetrization of $x_0^*$, then 
\[
\re x^* (x) = \int_G \re x_0^* (g(x)) \, d\mu (g) \leq \sup \{ \re x_0^* (x) : x \in C \}
\]
for every $x \in C$ (here we are using that $C$ is $G$-invariant). In other words, we obtain that $\sup \{ \re x^* (x) : x \in C \} \leq \sup \{ \re x_0^* (x) : x \in C \}$. Therefore, it follows from the fact that $x_0$ is $G$-invariant that
\begin{align*}
\re x^* (x_0) = \int_G \re x_0^* (g(x_0)) \, d\mu(g) = \re x_0^* (x_0) > \sup \{ \re x^* (x) : x \in C \} 
\end{align*} 
as desired.
\end{proof} 


\begin{remark} Let us notice that we {\it do need} to assume that $x_0$ is $G$-invariant in Theorem \ref{HB-separation} above. Indeed, let $X = (\R^2,\Vert\cdot\Vert_\infty)$ and $G := \{ \id_X, \sigma_{1,2}\}$, where $\sigma_{1,2}(x, y) = (y, x)$. Then, $G$ is a compact group and $B_{X}$ is a closed convex $G$-invariant subset of $X$. Notice that, for every functional $x^*: X \rightarrow \R$ that is $G$-invariant, we have that
	\begin{equation*}
		x^*(x, -x) = x^* (\sigma_{1,2}(x, -x)) = x^*(-x, x)
	\end{equation*} 
	for every $x \in \R$. This implies that $x^*(x, -x) = 0$ for every $x \in \R$. Therefore, the point $(x, -x) \notin B_{X}$ cannot be separated from $B_{X}$ by a $G$-invariant functional on $X$. 
\end{remark}

For more examples of compact groups $G$, $G$-invariant closed convex sets, $G$-invariant operators and $G$-invariant points we recommend \cite[section 4]{AFM} and \cite{AGPZ}.

We also have the following result which will be used to prove Lemma \ref{lemma-bourgain}.

\begin{proposition} \label{functional} Let $X$ be a Banach space and $G \subseteq \mathcal{L}(X)$ be a compact group of isometries. Suppose that $C$ is a convex $G$-invariant subset of $X$ and $x_0$ is a $G$-invariant point such that $\dist(x_0, C) > \delta > 0$. Then, there exists a functional $x^* \in S_{X^*}$ which is $G$-invariant such that
	\begin{equation*}
		\re x^* (x_0) > \sup \{ \re x^* (x): x \in C \} + \delta.
	\end{equation*}
\end{proposition}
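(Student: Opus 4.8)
The plan is to reduce Proposition \ref{functional} to the already-established group-invariant separation theorem (Theorem \ref{HB-separation}), being careful about the gap $\delta$ that is not present there. First I would pass to a genuinely closed convex set: replace $C$ by $\overline{C}$ if necessary (the closure of a $G$-invariant convex set is again $G$-invariant and convex, since each $g\in G$ is a homeomorphism), noting that $\dist(x_0,\overline{C}) \geq \dist(x_0,C) - 0$ may shrink, so instead I would work with the closed convex set $C_\delta := \{ x \in X : \dist(x, C) \leq \delta \}$. Arguing exactly as in the first proof of Theorem \ref{HB-separation} (using that $\norm{\cdot}$ is $G$-invariant and $C$ is $G$-invariant), $C_\delta$ is $G$-invariant, closed, and convex, and $x_0 \notin C_\delta$ because $\dist(x_0,C) > \delta$.

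Next I would apply Theorem \ref{HB-separation} to $C_\delta$ and the $G$-invariant point $x_0$ to obtain a $G$-invariant $x^* \in X^*$ with $\re x^*(x_0) > \sup\{ \re x^*(x) : x \in C_\delta\}$; normalizing, I may assume $x^* \in S_{X^*}$, which does not disturb $G$-invariance. The key step is then to convert the separation over the enlarged set $C_\delta$ into the quantitative gap $+\delta$ over $C$: for any $x \in C$ and any $u \in B_X$ we have $x + \delta u \in C_\delta$, hence $\re x^*(x) + \delta \re x^*(u) \leq \re x^*(x_0)$; taking the supremum over $u \in B_X$ and using $\norm{x^*} = 1$ gives $\re x^*(x) + \delta \leq \re x^*(x_0)$, i.e. $\sup\{\re x^*(x) : x \in C\} + \delta \leq \re x^*(x_0)$. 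To get the strict inequality as stated, I would start instead from $\dist(x_0, C) > \delta' > \delta$ for some intermediate $\delta'$ (possible since $\dist(x_0,C) > \delta$), run the argument with $C_{\delta'}$, and obtain $\sup\{\re x^*(x):x\in C\} + \delta' \leq \re x^*(x_0)$, which yields the desired strict inequality $\re x^*(x_0) > \sup\{\re x^*(x):x\in C\} + \delta$.

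The main obstacle is essentially bookkeeping rather than a genuine difficulty: one must make sure the enlarged set $C_\delta$ is honestly $G$-invariant (this is the same computation as the "$D$ is $G$-invariant" claim in the proof of Theorem \ref{HB-separation}, relying on $\norm{g(\cdot)} = \norm{\cdot}$ and $g(C) = C$), and one must be slightly careful that normalizing $x^*$ to the sphere is harmless — which it is, since $C$ is bounded away from $x_0$ so $x^* \neq 0$, and scaling a $G$-invariant functional keeps it $G$-invariant. The complex case needs no separate treatment here, as Theorem \ref{HB-separation} already delivers a $G$-invariant $x^* \in X^*$ in that setting and all the estimates above are phrased in terms of $\re x^*$.
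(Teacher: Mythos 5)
Your proposal is correct and follows essentially the same route as the paper: both enlarge $C$ to the $G$-invariant closed convex set $\overline{C+\delta B_X}$ (your $C_\delta$ is the same set), apply Theorem \ref{HB-separation} to it and the $G$-invariant point $x_0$, and then normalize. The only cosmetic difference is in extracting strictness: the paper keeps the term $\delta\|y^*\|$ inside the supremum over the enlarged set before dividing by $\|y^*\|$, while you normalize first and pass to an intermediate $\delta'>\delta$ — in fact your detour is unnecessary, since $\re x^*(x)+\delta\leq\sup\{\re x^*(y):y\in C_\delta\}<\re x^*(x_0)$ already gives the strict inequality.
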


\begin{proof} Since $\dist(x_0, C) > \delta > 0$, we have that $\{x_0\} \cap \overline{(C + \delta B_X)} = \emptyset$. Notice now that $\overline{C + \delta B_X}$ is $G$-invariant, closed, and convex. By Theorem \ref{HB-separation}, there exists $y^* \in X^*$ which is $G$-invariant such that
	\begin{eqnarray*}
		\re y^* (x_0) &>& \sup \left\{ \re y^* (x) + \delta y^* (z): x \in C, z \in B_X \right\} \\
		&=& \sup \{ \re y^* (x): x \in C\} + \delta \|y^* \|. 
	\end{eqnarray*}	
In particular $y^* \not=0$. Setting $x^*:= \frac{y^*}{\|y^*\|} \in S_{X^*}$, we get that 
	\begin{equation*}
		\re x^*(x_0) > \sup \{ \re x^*(x): x \in C\} + \delta
	\end{equation*}	
	as we wanted.
\end{proof}

From \cite[Proposition 1]{F}, we have that if $G \subseteq \mathcal{L} (X)$ is a compact group and $x \in S_X$ is a $G$-invariant point, then there exists a $G$-invariant functional $x^* \in S_{X^*}$ such that $x^* (x) = 1$. Our next result, which is of independent interest, shows that the existence of a $G$-invariant supporting functional of a (not necessarily $G$-invariant) point $x \in S_X$ is closely related to the convex hull of the orbit of $x$.

\begin{proposition} \label{chain}
	Let $X$ be a Banach space. Let $G \subseteq \mathcal{L} (X)$ be a group of isometries and $x \in S_X$. 
	\begin{enumerate}
		\item[(i)] If there exists a $G$-invariant supporting functional $x^*$ of $B_X$ at $x$, then $\co \{ g(x) : g \in G \} \subseteq S_X$.
		\item[(ii)] If there is an ascending chain $\mathcal{C}$ of compact subgroups of $G$ such that $\cup_{S \in \mathcal{C}} \,S$ is dense in $G$ (in the strong operator topology) and $\co \{ g(x) : g \in G\} \subseteq S_X$, then there exists a $G$-invariant supporting functional $x^*$ of $B_X$ at $x$. 
	\end{enumerate} 
\end{proposition}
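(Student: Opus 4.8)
For part (i), the plan is straightforward. If $x^*$ is a $G$-invariant supporting functional of $B_X$ at $x$, then $\re x^*(x) = \|x\| = 1$ (in the real case, $x^*(x)=1$; I will phrase everything with $\re$ to cover the complex case). For each $g \in G$, the $G$-invariance of $x^*$ gives $\re x^*(g(x)) = \re x^*(x) = 1$, so $\|g(x)\| \geq 1$; since $G$ acts by norm-isometries (the group is compatible with the norm), in fact $\|g(x)\| = 1$. Now any point $y \in \co\{g(x) : g \in G\}$ is a finite convex combination $y = \sum_i \lambda_i g_i(x)$, and $\re x^*(y) = \sum_i \lambda_i \re x^*(g_i(x)) = 1$, whence $\|y\| \geq \re x^*(y) = 1$; combined with $\|y\| \leq \sum_i \lambda_i \|g_i(x)\| = 1$ we get $y \in S_X$. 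This is all routine.

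For part (ii), here is the approach. Write $\mathcal{C} = \{S_i\}$ for the ascending chain of compact subgroups with $\bigcup_i S_i$ dense in $G$. For each $i$, form the $S_i$-symmetrization $\overline{x}^{\,i} := \int_{S_i} g(x)\, d\mu_i(g)$ using the normalized Haar measure $\mu_i$ on $S_i$. By Proposition \ref{lemma1} applied to the compact group $S_i$ and the closed convex set $\overline{\co}\{g(x): g \in G\}$ (which is $S_i$-invariant and, by hypothesis, contained in $S_X$ — hence is a closed convex subset of $S_X$), the point $\overline{x}^{\,i}$ lies in $S_X$; so $\|\overline{x}^{\,i}\| = 1$. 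Moreover $\overline{x}^{\,i}$ is $S_i$-invariant, so by the $G$-invariant Hahn-Banach result \cite[Proposition 1]{F} (applied to the compact group $S_i$) there is an $S_i$-invariant functional $x_i^* \in S_{X^*}$ with $x_i^*(\overline{x}^{\,i}) = 1$. I expect the key step to be passing to the limit along the chain. The natural move is to take a weak$^*$ cluster point $x^*$ of the net $(x_i^*)$ in $B_{X^*}$ (which is weak$^*$-compact). One then needs two things: first, that $x^*$ is $G$-invariant, and second, that $x^*$ still supports $B_X$ at $x$, i.e. $\re x^*(x) = 1$.

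For $G$-invariance of the cluster point: fix $g \in G$ and $x \in X$; I want $x^*(g(x)) = x^*(x)$. Choose $h_k \in S_{j(k)}$ with $h_k \to g$ strongly; since the chain is ascending, for $i \geq j(k)$ the functional $x_i^*$ is $S_i$-invariant hence $S_{j(k)}$-invariant, so $x_i^*(h_k(x)) = x_i^*(x)$. Passing first to the cluster point in $i$ (using that evaluation at a fixed vector is weak$^*$-continuous) gives $x^*(h_k(x)) = x^*(x)$ for each $k$; then letting $k \to \infty$ and using $\|h_k(x) - g(x)\| \to 0$ together with $\|x^*\| \leq 1$ yields $x^*(g(x)) = x^*(x)$. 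Some care is needed because a cluster point rather than a limit is involved — one should phrase the argument in terms of a subnet, or first fix $g$ and $x$ and extract the relevant estimates before taking the cluster point; I would organize it so that the density of $\bigcup_i S_i$ and the weak$^*$ cluster point are handled in the right order. For the supporting property: since $x_i^*(\overline{x}^{\,i}) = 1$ and $\overline{x}^{\,i} \to x$ is \emph{not} generally true, one cannot argue directly this way; instead I would use that $x_i^*$ is $S_i$-invariant to write $x_i^*(x) = x_i^*(\overline{x}^{\,i}) = 1$ for \emph{every} $i$ (because symmetrizing $x$ over $S_i$ does not change the value of an $S_i$-invariant functional — precisely the identity $x_i^*(\overline{x}^{\,i}) = \int_{S_i} x_i^*(g(x))\,d\mu_i(g) = \int_{S_i} x_i^*(x)\,d\mu_i(g) = x_i^*(x)$). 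Hence $x_i^*(x) = 1$ for all $i$, and passing to the cluster point gives $\re x^*(x) = 1$, so $x^* \in S_{X^*}$ is the desired $G$-invariant supporting functional. The main obstacle, as indicated, is the bookkeeping around the weak$^*$ cluster point versus the strong-operator density of $\bigcup_i S_i$; once one commits to a subnet and checks that evaluations are weak$^*$-continuous, everything falls into place.
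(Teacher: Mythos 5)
Your proposal is correct and follows essentially the same route as the paper's proof: part (i) is the same one-line estimate, and for part (ii) you symmetrize $x$ over each compact subgroup $S$ in the chain, invoke \cite[Proposition 1]{F} to get an $S$-invariant norm-one functional, use the integral identity to see that it already norms $x$ itself, and then pass to a limit that is $G$-invariant by the density hypothesis. The only (immaterial) difference is that the paper takes the limit of the bounded family $(x_S^*)$ along a free ultrafilter on the index set of the chain, whereas you extract a weak$^*$ cluster point via a subnet — equivalent devices requiring the same cofinality bookkeeping you describe.
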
 

\begin{proof} For (i), notice that
\begin{equation*} 	
1 \geq \Big\| \sum_{i=1}^n \lambda_i g_i (x)\Big\| \geq x^* \Big( \sum_{i=1}^n \lambda_i g_i (x) \Big) =   \sum_{i=1}^n \lambda_i x^*\big(g_i (x) \big)=   \sum_{i=1}^n \lambda_i x^* (x) =   \sum_{i=1}^n \lambda_i= 1 
\end{equation*} 
for $\lambda_i \geq 0$ and $g_i \in G$, $i=1,\ldots, n$, with $\sum_{i=1}^n \lambda_i = 1$. Now let us prove (ii). In order to do so, we use arguments similar to \cite[Theorem 2.5]{AGPZ}. Let $S \in \mathcal{C}$ be fixed. Let us denote by $\sigma_S (x)$ the $S$-symmetrization of $x$. From the argument from the proof of Proposition \ref{lemma1}, we have that $\sigma_S (x) \in \cconv \{ g(x) : g \in G \} \subseteq S_X$. By the Hahn-Banach theorem \cite[Proposition 1]{F}, we can find a $S$-invariant functional $x_S^* \in S_{X^*}$ such that $x_S^* (\sigma_S (x)) = 1$. On the other hand, 
	\[
		x_S^* (\sigma_S (x)) = \int_S x_S^* (g(x)) \, d\mu_S (g) = \int_S x_S^* (x) \, d\mu_S (g) = x_S^* (x),
	\]
	where $\mu_S$ denotes the normalized Haar measure on $S$. Thus, $x_S^* (x) = 1$. Now, take a free ultrafilter $\mathcal{U}$ on the index set of compact subgroups of $G$. Let us define the mapping $x^*$ on $X$ by 
	\[
		x^* (u) = \lim_{\mathcal{U}} x_S^* (u)
	\] 
	for each $u \in X$. The limit is well-defined since $\{ x_S^* (u): S \in \mathcal{C} \}$ is a bounded set. Note that $x^*$ is a continuous linear functional that satisfies $\|x^*\| = x^* (x) = 1$. Moreover, $x^* (h(x)) = x^* (x)$ for every $h \in \cup_{S \in \mathcal{C}} S$ and every $x \in X$. Thus, the denseness assumption assures that $x^*$ is $G$-invariant and this completes the proof. 
\end{proof} 

We have the following consequence of Proposition \ref{chain}. 

\begin{corollary} \label{cor:chain}
	Let $X$ be a Banach space and $G \subseteq \mathcal{L} (X)$ be a compact group of isometries. Then, every point $x \in S_X$ is a supporting point of a $G$-invariant supporting functional $x^*$ of $B_X$ at $x$ if and only if $\co \{ g(x) : g \in G \} \subseteq S_X$ for every $x \in S_X$. 
\end{corollary}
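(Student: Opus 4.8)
The plan is to derive Corollary \ref{cor:chain} directly from Proposition \ref{chain}, since for a compact group $G$ the trivial chain $\mathcal{C} = \{G\}$ already satisfies the hypotheses of part (ii) (namely, $G$ is itself a compact subgroup of $G$, and the union over this one-element chain is $G$, which is trivially dense in $G$). So the content of the corollary is just the conjunction of the two implications in Proposition \ref{chain}, quantified over all $x \in S_X$.

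For the forward direction, suppose every $x \in S_X$ admits a $G$-invariant supporting functional $x^*$ of $B_X$ at $x$. Fix $x \in S_X$; by part (i) of Proposition \ref{chain} applied to this $x$ and its supporting functional, $\co\{g(x) : g \in G\} \subseteq S_X$. Since $x$ was arbitrary, this holds for every $x \in S_X$. For the converse, assume $\co\{g(x) : g \in G\} \subseteq S_X$ for every $x \in S_X$. Fix $x \in S_X$; since $G$ is compact, the single-element chain $\mathcal{C} = \{G\}$ is an ascending chain of compact subgroups of $G$ whose union is dense in $G$, and by hypothesis $\co\{g(x) : g \in G\} \subseteq S_X$. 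Part (ii) of Proposition \ref{chain} then yields a $G$-invariant supporting functional $x^*$ of $B_X$ at $x$. Again $x$ was arbitrary, so this completes the equivalence.

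There is essentially no obstacle here: the corollary is a formal consequence of the proposition once one observes that compactness of $G$ makes the chain hypothesis in (ii) automatic. The only point worth a sentence in the write-up is precisely that observation — that a compact group is its own compact subgroup and the trivial chain is dense — so that (ii) can be invoked without further work. I would therefore keep the proof to three or four lines, citing Proposition \ref{chain}(i) for the ``only if'' part and Proposition \ref{chain}(ii) with $\mathcal{C} = \{G\}$ for the ``if'' part.

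\begin{proof}
Suppose first that every $x \in S_X$ is a supporting point of some $G$-invariant supporting functional of $B_X$ at $x$. Fix $x \in S_X$ and let $x^*$ be such a functional. By Proposition \ref{chain}.(i), $\co \{ g(x) : g \in G \} \subseteq S_X$. Since $x \in S_X$ was arbitrary, this holds for every $x \in S_X$.

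Conversely, assume that $\co \{ g(x) : g \in G \} \subseteq S_X$ for every $x \in S_X$, and fix $x \in S_X$. Since $G$ is compact, $G$ is itself a compact subgroup of $G$, so $\mathcal{C} := \{ G \}$ is an ascending chain of compact subgroups of $G$ with $\bigcup_{S \in \mathcal{C}} S = G$ dense in $G$. Applying Proposition \ref{chain}.(ii) to this chain and to the point $x$, we obtain a $G$-invariant supporting functional $x^*$ of $B_X$ at $x$. As $x \in S_X$ was arbitrary, the proof is complete.
\end{proof}
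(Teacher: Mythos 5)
Your proof is correct and follows exactly the derivation the paper intends: the paper states the corollary as an immediate consequence of Proposition \ref{chain}, with part (i) giving the ``only if'' direction and part (ii) applied to the trivial chain $\mathcal{C} = \{G\}$ (admissible precisely because $G$ is compact) giving the ``if'' direction. Your explicit observation that compactness of $G$ makes the chain hypothesis automatic is the only non-trivial point, and you handle it correctly.
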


As a particular case of Corollary \ref{cor:chain}, we have that if $G$ is the trivial group, then we recover the well-known result that every point of the unit sphere $S_X$ of a Banach space $X$ is a supporting point of the closed unit ball $B_X$ of $X$.

\section{Norm-attaining $G$-invariant operators}\label{sec:NA}

Our aim in this section is to extend some relevant results from the theory of norm-attaining operators under the hypothesis that the operators that we are considering are invariant under a group action. This approach does not depend only on the underlying Banach spaces but also on the group acting on $X$. Indeed, we start by studying some counterexamples to the Bishop-Phelps theorem in the scenario of $G$-invariant operators. Many of these counterexamples (see \cite{acosta1, G, JW, L, S}) consider $X$ to be a Banach sequence space, that is, a Banach space whose elements are infinite sequences of real or complex numbers. In this scenario, the most natural group to consider is a group of permutations of the natural numbers. To simplify the notation, we will denote in the same way the subgroup $G$ of permutations of the natural numbers and the natural group associated to the sequence space $X$ whose action is naturally defined by 
\begin{equation*} 
g(x_1,x_2,x_3,\ldots)=(x_{g(1)},x_{g(2)},x_{g(3)},\ldots) 
\end{equation*} 
for every $g\in G$ and every $(x_1,x_2,x_3,\ldots)\in X$.

The first counterexample that we consider is based on Lindenstrauss' work \cite{L}, where he shows that if $Y$ is a strictly convex renorming of $c_0$, then the set $\NA(c_0, Y)$ is {\it not} dense in $\mathcal{L}(c_0, Y)$. Let us show that, for invariant operators, the validity of this result depends on the properties of the group itself. In fact, the key point here is the structure of the orbits of the action of the subgroup of permutations $G$ on the natural numbers. As we have mentioned above, the action of a group of permutations $G$ on $\mathbb N$ divides the natural numbers in pairwise disjoint sets, which we denote by $\Orb(G,\mathbb N)$. Our interest is the set of orbits that are finite. For Propositions \ref{example1}, \ref{d-star}, and \ref{example3}, we will be using the following notation for this set:
\begin{equation*} 
\mathcal F_{G,\mathbb N}:=\{H\in \Orb(G,\mathbb N): |H|<+\infty\},
\end{equation*} 
where $|H|$ denotes the cardinality of $H$. For simplicity, we use the notation $\Orb(n)$ for the set $\{ g(n) : g \in G\}$.

\begin{proposition} \label{example1} 
	Let $G$ be a group of permutations of the natural numbers and $Y$ a Banach space. 
	\begin{enumerate}
			\setlength\itemsep{0.3em}
	\item
	If $\mathcal F_{G,\mathbb N}$ is finite, then every $G$-invariant operator attains its norm, that is, 
\begin{equation*} 
\NA_G (c_0, Y) = \mathcal{L}_G(c_0, Y).
\end{equation*} 
\item If $\mathcal{F}_{G, \mathbb{N}}$ is infinite and $Y$ is a strictly convex renorming of $c_0$, then 
\begin{equation*} 
\overline{\NA_G (c_0, Y)} \neq \mathcal{L}_G(c_0, Y). 
\end{equation*} 
\end{enumerate} 
\end{proposition}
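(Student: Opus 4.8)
The plan is to treat the two parts separately, since they have opposite flavours (positive result versus counterexample). For part (1), the key observation is that a $G$-invariant operator $T \in \mathcal{L}(c_0, Y)$ is completely determined by its values on the finitely many finite orbits together with its behaviour on the infinite orbits, and on each infinite orbit $G$-invariance forces $T$ to be constant along that orbit. Since the coordinates in an infinite orbit converge to $0$ weakly (more precisely, the corresponding unit vectors $e_n$ of $c_0$ tend weakly to $0$ along that orbit), and $T$ is bounded, $T$ must annihilate the directions coming from each infinite orbit: if $n, m$ lie in the same infinite orbit then $T e_n = T e_m$, but also $\|T e_n + \cdots + T e_m\|$ stays bounded as we sum over more and more indices in that orbit (because the corresponding $c_0$-vector has norm $1$), which forces $T e_n = 0$. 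Hence $T$ factors through the finite-dimensional space spanned by the finite orbits (the hypothesis that $\mathcal F_{G,\mathbb N}$ is finite), and an operator from $c_0$ whose restriction to a finite-codimensional $c_0$-summand is zero attains its norm on that finite-dimensional piece; one should double-check the norm-attainment using that the "finite part" is a finite-dimensional subspace of $c_0$ (compact unit ball) and the remaining part contributes nothing to $\|Tx\|$. I would phrase this cleanly: write $c_0 = F \oplus_\infty c_0(\mathbb{N} \setminus F)$ where $F = \bigcup \mathcal F_{G,\mathbb N}$ is finite, show $T|_{c_0(\mathbb{N}\setminus F)} = 0$, and conclude $\|T\| = \|T|_F\|$ which is attained.

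For part (2), assume $\mathcal F_{G,\mathbb N}$ is infinite and $Y$ is a strictly convex renorming of $c_0$; I want to produce a $G$-invariant operator $T_0$ that cannot be approximated by $G$-invariant norm-attaining ones. The natural strategy is to adapt Lindenstrauss' original construction so that it respects $G$-invariance. Enumerate infinitely many distinct finite orbits $H_1, H_2, \ldots \in \mathcal F_{G,\mathbb N}$, pick a representative $n_k \in H_k$, and note that the "averaging over a finite orbit" functional $u_k^* := \sum_{n \in H_k} e_n^*$ is a $G$-invariant element of $c_0^* = \ell_1$, and these are disjointly supported. Then build $T_0 \in \mathcal{L}_G(c_0,Y)$ essentially by the formula $T_0 x = \sum_k a_k\, u_k^*(x)\, f_k$ for a suitable bounded sequence $(a_k)$ increasing to $\|T_0\|$ and a suitable sequence $(f_k)$ in $Y$ — this is $G$-invariant by construction since it is built from the $G$-invariant functionals $u_k^*$. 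The point of Lindenstrauss' argument is that any operator close to $T_0$ still has the property that its "diagonal coefficients" increase to its norm without reaching it, and strict convexity of $Y$ is what forbids norm attainment; the extra work here is to check that this obstruction survives when we restrict to $G$-invariant perturbations, i.e. that $\overline{\NA_G(c_0,Y)}$ inside $\mathcal{L}_G(c_0,Y)$ still misses $T_0$. Since $\mathcal{L}_G(c_0,Y)$ is $1$-complemented in $\mathcal{L}(c_0,Y)$ via symmetrization (as recalled before Proposition \ref{norming}) and symmetrization does not increase norm, one can transfer Lindenstrauss-type estimates from the ambient space; alternatively, one works intrinsically with the functionals $u_k^*$.

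I expect the main obstacle to be part (2): one must either carefully reproduce Lindenstrauss' non-density argument in the $G$-invariant setting — verifying that the specific operator $T_0$ built from the orbit-averaging functionals genuinely fails to be approximable by $G$-invariant norm-attaining operators, not merely by norm-attaining ones — or find the right abstract principle (perhaps: $\mathcal{L}_G(c_0,Y)$ is isometric to $\mathcal{L}(c_0(\mathcal F_{G,\mathbb N}), Y)$ modulo the infinite-orbit part, reducing to the classical case on a $c_0$-space indexed by the infinite family $\mathcal F_{G,\mathbb N}$). The cleanest route is likely the reduction: since $G$-invariance kills everything on infinite orbits (by the argument of part (1)) and is free on the collection of finite orbits, $\mathcal{L}_G(c_0, Y)$ should be isometrically identified with $\mathcal{L}(Z, Y)$ where $Z$ is a copy of $c_0$ spanned by the normalized indicators of the finite orbits; then infiniteness of $\mathcal F_{G,\mathbb N}$ makes $Z$ infinite-dimensional and isomorphic to $c_0$, so Lindenstrauss' theorem applies verbatim to give $\overline{\NA(Z,Y)} \neq \mathcal{L}(Z,Y)$, and the identification is compatible with norm attainment. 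Verifying that this identification preserves norm-attainment (a norm-attaining operator in $\mathcal{L}_G(c_0,Y)$ corresponds to a norm-attaining operator in $\mathcal{L}(Z,Y)$, using that the extreme points / norming set of $B_{c_0}$ relevant to $G$-invariant operators sit inside $Z$) is the delicate bookkeeping step, and is where I would spend the most care.
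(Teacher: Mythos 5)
Your part (1) is exactly the paper's argument: for $n$ in an infinite orbit and $k_1,\dots,k_m\in\Orb(n)$, $G$-invariance gives $T\bigl(\sum_{i=1}^m e_{k_i}\bigr)=mT(e_n)$ while $\bigl\|\sum_{i=1}^m e_{k_i}\bigr\|_{c_0}=1$, so boundedness forces $T(e_n)=0$; hence $T$ factors through the span of the finitely many finite orbits and attains its norm there by compactness. (The remark about weak null-ness of $(e_n)$ is not needed.)

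For part (2) the paper takes your route (a) in its barest form: it defines $T(e_n):=e_j$ for $n\in H_j$ and $T(e_n):=0$ otherwise, i.e.\ $Tx=\sum_j\bigl(\sum_{n\in H_j}x_n\bigr)e_j$, and quotes \cite[Lemma 2]{Miguel}: every norm-attaining operator $R$ from $c_0$ into a strictly convex space satisfies $R(e_n)=0$ for all $n$ larger than some $n_R$. Since $\|T(e_n)\|_Y$ is bounded below, $T$ is uniformly far from \emph{all} norm-attaining operators, hence in particular from $\NA_G(c_0,Y)$. This settles, in the trivial direction, the worry you raise about whether the obstruction ``survives when we restrict to $G$-invariant perturbations'': because $\NA_G(c_0,Y)\subseteq\NA(c_0,Y)$, non-approximability by arbitrary norm-attaining operators is \emph{stronger} than what is required, so no extra verification is needed there --- your concern points in the wrong direction. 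Your preferred route (b), the isometric identification of $\mathcal{L}_G(c_0,Y)$ with $\mathcal{L}(c_0(J),Y)$ (with $J$ indexing the finite orbits) via the normalized orbit-averaging map $x\mapsto\bigl(|H_j|^{-1}\sum_{n\in H_j}x_n\bigr)_{j\in J}$, is genuinely different and, once the bookkeeping you flag is done, perfectly sound; it has the concrete advantage of being insensitive to the orbit sizes. Indeed, when $\sup_j|H_j|=\infty$ both the paper's operator and your unnormalized $T_0x=\sum_k a_k u_k^*(x)f_k$ with $u_k^*=\sum_{n\in H_k}e_n^*$ and $(a_k)$ merely bounded fail to be bounded (test against $x=\sum_{n\in H_k}e_n$, which has $c_0$-norm $1$ but is sent to a vector of size about $a_k|H_k|$), so on the direct route one must either insert the factors $|H_k|^{-1}$ --- and then obtain the lower bound $\|T-R\|\geq\|e_j\|_Y$ by testing against $\sum_{n\in H_j}e_n$ rather than against single basis vectors --- or impose a uniform bound on $|H_j|$ as the paper does in Proposition \ref{d-star}. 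Your reduction absorbs this normalization automatically, which is the main thing it buys over the paper's argument.
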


\begin{proof} Set $(e_i)_{i=1}^{\infty}$ to be the canonical basis of $c_0$. 
First, suppose that $\mathcal F_{G,\mathbb N}$ is finite and $Y$ is a Banach space. Without loss of generality and to simplify our notation, we can assume that there exists $N \in \N$ such that 
	\[
		\mathcal F_{G,\mathbb N}= \Big\{ H_1=\{1,\ldots,{n_1}\},H_2=\{{n_1+1},\ldots,{n_2}\},\ldots,H_N=\{{n_{N-1}+1},\ldots,{n_N}\} \Big\}.
	\]
Fix an arbitrary element $T \in \mathcal{L}_G(c_0, Y)$ and let $n > n_N$ (thus, $\Orb(n)$ is not finite). Then, for $k_1,\ldots,k_m\in \Orb(n)$ with $m \in \mathbb{N}$, we have that $\Vert \sum_{i=1}^m e_{k_i}\Vert=1$ and since $T$ is $G$-invariant and linear we have that $T\left(\sum_{i=1}^m e_{k_i}\right)=mT(e_n)$. Since $m$ is arbitrary and the operator $T$ is bounded, we have that $T(e_n)=0$. Thus, the operator $T$ only depends on the first $n_N$ coordinates, that is, 
	\begin{equation*}
		T(x) = T((x_1, \ldots, x_{n_N}, 0, 0, 0, \ldots ))
	\end{equation*}
	for every $x= (x_n)_{n=1}^{\infty} \in c_0$. It follows that $T \in \NA_G (c_0, Y)$.
	
	Next, we assume that the set $\mathcal F_{G,\mathbb N}$ is infinite and $Y$ is a strictly convex renorming of $c_0$. Consider an enumeration $H_1, H_2, \ldots$ of the sets in $\mathcal F_{G,\mathbb N}$. We define the linear operator $T$ from $c_0$ into $Y$ given by 
	\[
		T(e_n) :=\begin{cases}
		e_j, &	\text{ if }n\in H_j,\\
		0, &\text {otherwise}.
		\end{cases}
	\]
The operator $T$ is clearly linear, bounded, and $G$-invariant. However, any norm-attaining operator $R$ from $c_0$ into a strictly convex Banach space satisfies that $R(e_n)=0$ for every $n$ bigger than some $n_R \in \mathbb{N}$ (see, for instance, \cite[Lemma 2]{Miguel}). Therefore, we have that the operator $T$ cannot be approximated by norm-attaining operators.
\end{proof}

In the context of bounded linear operators, more examples where the Bishop-Phelps theorem fails have been considered after Lindenstrauss' work. For instance, W. Gowers \cite{G} proved that $\NA(d_{*}(w, 1), \ell_p)$ is {\it not} dense in $\mathcal{L}(d_{*}(w, 1), \ell_p)$ for $1 < p < \infty$, where $d_{*}(w, 1)$ is the predual of the Lorentz sequence space provided that $w = (1/n)_{n=1}^{\infty}$. In Proposition \ref{d-star}, we give a full characterization of when the equality $\overline{\NA_G (d_{*}(w, 1), \ell_p)} = \mathcal{L}_G(d_{*}(w, 1), \ell_p)$ holds true in the case that $G$ is a group of permutations of $\N$ such that the size of orbits is bounded. Before doing so, let us recall the definitions of  the Lorentz sequence space $d(w, 1)$ and its predual $d_{*}(w, 1)$: 
\[
	d(w,1) = \Big\{ x= (x_i)_{i=1}^{\infty} \in c_0 : \sup \Big\{ \sum_{n=1}^\infty |x(\sigma(n))|w_n : \sigma : \mathbb{N} \rightarrow \mathbb{N} \text{ injective }\Big\} < \infty \Big\} 
\]
endowed with the norm 
\[
	\| x \| = \sup \Big\{ \sum_{n=1}^\infty |x(\sigma(n))|w_n : \sigma : \mathbb{N} \rightarrow \mathbb{N} \text{ injective }\Big\}
\]
and 
\[
	d_{*}(w,1)=\left\{x=(x_i)_{i=1}^{\infty} \in c_0: \lim_{k\to\infty}\frac{\sum_{i=1}^k[x_i]}{\sum_{i=1}^k w_i} \right\}
\] 
where $([x_i])_{i=1}^{\infty}$ is the non-increasing rearrangement of $(|x|_i)_{i=1}^{\infty}$ endowed with the norm 
\[
	\Vert x\Vert=\sup_k \frac{\sum_{i=1}^k[x_i]}{\sum_{i=1}^k w_i}.
\]

We recall the well-known fact that the usual vector basis $(e_n)$ is a monotone Schauder basis of $d(w,1)$ and $d_* (w,1)$ has a Schauder basis whose sequence of biorthogonal functionals is the basis of $d(w,1)$ (see \cite{Gar, Wer}). 

\begin{proposition} \label{d-star} 
	Let $G$ be a group of permutations of the natural numbers. Consider also $w = (1/n)_{n=1}^\infty \in c_0$ and assume that $1 < p < \infty$.
\begin{enumerate}
		\setlength\itemsep{0.3em}
		\item If $\mathcal F_{G,\mathbb N}$ is finite, then, for any Banach space $Y$, every $G$-invariant operator from $d_{*}(w,1)$ into $Y$ attains its norm. In other words, 
\begin{equation*} 
\NA_G (d_{*}(w, 1), Y) = \mathcal{L}_G(d_{*}(w, 1), Y).
\end{equation*} 	
\item If $\mathcal F_{G,\mathbb N}$ is infinite and there exists $C >0$ such that $ |H| < C$ for every $H \in \mathcal F_{G,\mathbb N}$, then 
$\overline{\NA_G (d_{*}(w, 1), \ell_p)} \neq \mathcal{L}_G(d_{*}(w, 1), \ell_p)$.
	\end{enumerate} 
\end{proposition}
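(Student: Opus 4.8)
The plan is to treat the two items separately, reusing the structural ideas from the proof of Proposition~\ref{example1} and from Gowers' original counterexample.

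For item (1), I would argue exactly as in the second half of the proof of Proposition~\ref{example1}. Suppose $\mathcal F_{G,\mathbb N}$ is finite, say consisting of the orbits contained in $\{1,\dots,n_N\}$ (after relabelling the basis so that the finite orbits occupy an initial segment). Fix $T \in \mathcal L_G(d_*(w,1),\ell_p)$. If $n > n_N$, then $\Orb(n)$ is infinite, so for any distinct $k_1,\dots,k_m \in \Orb(n)$ the $G$-invariance and linearity of $T$ give $T\big(\sum_{i=1}^m e_{k_i}\big) = m\,T(e_n)$. The key point is that $\big\|\sum_{i=1}^m e_{k_i}\big\|_{d_*(w,1)}$ grows sublinearly in $m$: indeed, since $w\notin\ell_1$, $\big\|\sum_{i=1}^m e_{k_i}\big\| = \frac{m}{\sum_{j=1}^m w_j} \to 0$ as $m\to\infty$ (here one uses $w$ non-increasing so the $[x_i]$ rearrangement is trivial, and $w_1<1$ so this quantity is at most $1$). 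Boundedness of $T$ then forces $T(e_n)=0$, so $T$ factors through the finite-dimensional space $\spann\{e_1,\dots,e_{n_N}\}$ and hence attains its norm by compactness. This gives $\NA_G(d_*(w,1),\ell_p) = \mathcal L_G(d_*(w,1),\ell_p)$.

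For item (2), the plan is to adapt Gowers' operator to the $G$-invariant setting. Assume $\mathcal F_{G,\mathbb N}$ is infinite with $|H| < C$ for all $H \in \mathcal F_{G,\mathbb N}$; enumerate these finite orbits as $H_1, H_2, \dots$. The idea is to build a $G$-invariant operator $T \colon d_*(w,1)\to\ell_p$ by prescribing $T$ on each $e_n$ with $n\in H_j$ to be a fixed vector $v_j\in\ell_p$ depending only on $j$ (and $T(e_n)=0$ on the infinite orbits, which is forced anyway by the argument of item (1)), choosing the $v_j$ with carefully tuned norms — decreasing to $0$ but not too fast, mimicking Gowers' construction with the weights adjusted to absorb the bounded orbit sizes $|H_j|$ — so that $\|T\|$ is not attained and moreover $T$ lies at positive distance from $\NA_G(d_*(w,1),\ell_p)$. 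One then shows that any $G$-invariant operator close to $T$ would have to have a similar "spreading" profile on the finite orbits, contradicting the known fact (the analogue of Gowers' argument, or of \cite[Lemma~2]{Miguel}-type reasoning) that a norm-attaining operator into $\ell_p$ must "localize". The uniform bound $|H|<C$ is what makes the invariance harmless: replacing a single basis vector by the average over an orbit of size at most $C$ changes norms by a bounded factor, so Gowers' estimates survive up to constants.

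The main obstacle I anticipate is item (2): one must verify that Gowers' delicate norm estimates for $d_*(w,1)$ — which control $\|T\|$ and simultaneously the distance from $T$ to the set of norm-attaining operators — still go through after the orbit-averaging, and in particular that the $G$-invariant operators approximating $T$ cannot exploit the group structure to attain the norm. Handling the non-increasing rearrangement in the $d_*(w,1)$ norm when several coordinates in the same orbit are activated simultaneously is the technically fiddly part; the hypothesis that orbit sizes are uniformly bounded by $C$ is precisely what keeps this under control.
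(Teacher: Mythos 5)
Your item (1) is in substance the paper's argument (the paper phrases it through the coordinate functionals $T^*e_i^*\in d(w,1)$, which are $G$-invariant and hence constant on each infinite orbit, but the estimate is the same one you use). There is a computational slip, though: $\bigl\Vert\sum_{i=1}^m e_{k_i}\bigr\Vert = m\big/\sum_{j=1}^m w_j$ tends to infinity (it is at least $1/w_1>1$), not to $0$. What your argument actually needs is only that this quantity is $o(m)$, equivalently that $\sum_{j=1}^m w_j\to\infty$, which is exactly $w\notin\ell_1$; so the conclusion $T(e_n)=0$ on infinite orbits, and hence item (1), survives the slip.

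Item (2) is where the genuine gap lies. You correctly isolate the two key facts --- a $G$-invariant operator is determined by $H_n$-invariant data on each finite orbit, and every norm-attaining operator from $d_*(w,1)$ into $\ell_p$ has finite support --- but you never construct the operator, and the tuning you describe (``$\Vert v_j\Vert$ decreasing to $0$ but not too fast'') is the wrong shape: any decay of the orbit-block norms to $0$ would destroy the uniform lower bound on the distance to finite-support (hence to norm-attaining) operators, while keeping them bounded below leaves the boundedness of $T$ entirely unaddressed --- and a ``constant on orbits'' operator with non-decaying blocks is certainly not automatically bounded on $d_*(w,1)$. The paper's construction resolves this by taking $T(e_j)=f_n(e_j)\,e_n$ for $j\in H_n$, where $f_n$ is a norm-one $H_n$-invariant functional on $\spann\{e_k:k\in H_n\}$ and $e_n$ is the $n$-th unit vector of $\ell_p$: routing distinct orbits to distinct coordinates of $\ell_p$ is precisely what makes boundedness provable, since Jensen's inequality together with $|H_n|<C$ reduces $\|T\|$ to a constant times the norm of the formal identity $d_*(w,1)\to\ell_p$ (bounded, by Gowers), while $\sup\{\|(T-S)x\|:x\in B_{X_n}\}=\|f_n\|=1$ for every finite-support $S$ and every orbit beyond the support of $S$. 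Note also that your concern about $G$-invariant approximants ``exploiting the group structure'' is a non-issue: the distance bound is to \emph{all} norm-attaining operators. So the skeleton is right, but the construction and its boundedness --- the technical heart of item (2), and the place where the hypothesis $|H|<C$ actually enters --- are missing.
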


Before proving Proposition \ref{d-star}, let us point out that if there is an infinite partition $\{ I_n \}$ of $\mathbb{N}$ such that $\sup_n |I_n| < C$ for some $C>0$, then any group $G$ naturally induced by permutation groups on the sets $I_n$ will satisfy the hypothesis in condition (2) above.  

\begin{proof}[Proof of Proposition \ref{d-star}] Assume first that $\mathcal F_{G,\mathbb N}$ is finite. As in the proof of Proposition \ref{example1}, we may assume that there exists $N \in \N$ such that 
\[
		\mathcal F_{G,\mathbb N}= \Big\{ H_1=\{1,\ldots,{n_1}\},H_2=\{{n_1+1},\ldots,{n_2}\},\ldots,H_N=\{{n_{N-1}+1},\ldots,{n_N}\} \Big\}.
	\]
	 Let $Y$ be an arbitrary Banach space and $T \in \mathcal{L}_G(d_{*}(w, 1), Y)$ be given. Fix $n_0 > n_N$ (so, $\Orb(n_0)$ is not finite). Denote by $(e_n)$ the standard vector basis of $d_* (w,1)$. We claim that $\|T (e_{n_0})\| = 0$. To this end, write $\Orb (n_0) = \{m_1, m_2, \ldots \}$ and observe that for $M \in \N$
	 \[
	y_M:= \sum_{j=1}^{M} c_M e_{m_j} \in S_{d_* (w,1)}, \,\, \text{ where} \,\, c_M = \frac{\sum_{i=1}^M \frac{1}{i}}{M}. 
	 \]
Since $T$ is $G$-invariant, we have 
\[
\|T\| \geq \|T(y_M)\| = \Big\| \sum_{j=1}^{M} c_M T(e_{m_j}) \Big\| = M c_M \|T(e_{n_0})\| = \Big(\sum_{i=1}^M \frac{1}{i} \Big) \|T(e_{n_0})\|. 
\] 
Note that the right-hand side tends to infinity as $M \rightarrow \infty$ if $T(e_{n_0}) \not= 0$. Thus, $\|T (e_{n_0})\| = 0$ and this shows that $T$ depends only on finitely many coordinates. Therefore, $T \in \NA_G(d_{*}(w, 1), Y)$ as we wanted to prove.

	Assume now that $\mathcal F_{G,\mathbb N}$ is not finite and consider an enumeration $H_1, H_2,\ldots$ of the sets in $\mathcal F_{G,\mathbb N}$. By assumption, there is $C >0$ such that $|H_n| < C$ for all $n \in \N$. For each $n \in \N$, consider the finite dimensional subspace of $d_* (w,1)$ given by $X_n :=\spann\{e_k:k\in H_n \}$. Let $f_n$ be a norm-one functional on $X_n$ that it is invariant under the action of the group $H_n$ (considered as a group in $\mathcal{L} (X_n)$). 
	Consider the operator $T \in \mathcal{L} (d_{*} (w,1), \ell_p)$ given by 
	\[
		T(e_j)= (0,\ldots, 0, \underbrace{f_n (e_j)}_{n ^{\text{th}}-\text{term}}, 0, 0, \ldots )
	\]
 if $j\in H_n$ for some $n \in \N$ and $T(e_j) = 0$, otherwise. 
	Then, by using Jensen's inequality, 
	\begin{align*}
		\Vert T(x)\Vert^p = \sum_{n =1}^\infty \left| \sum_{j \in H_n } f_n (e_j) x_j \right|^p \leq \sum_{n=1}^\infty \sum_{j \in H_n} |H_n|^{p-1} |f_n (e_j)|^p |x_j|^p \leq 
		C^{p-1} \sum_{n=1}^\infty |x_n|^p 
	\end{align*} 
	for each $x=(x_1,x_2,\ldots) \in d_{*}(w, 1)$. Recall that the formal identity mapping from $d_{*}(w, 1)$ into $\ell_p$ is bounded and every operator from $d_{*}(w, 1)$ into $\ell_p$ that attains its norm has finite support (see the proof of \cite[Theorem, p. 149]{G}). This implies that $T$ is a $G$-invariant bounded linear operator from $d_{*}(w, 1)$ into $\ell_p$ which cannot be approximated by norm-attaining operators. 
	As a matter of fact, if $S$ is a norm-attaining operator from $d_{*}(w, 1)$ into $\ell_p$, then there exists $N \in \N$ so that $S(e_k)=0$ for $k>N$. For any $n \in \N$ with $\min H_n > N$, we have that 
\begin{equation*} 
	\| T - S \| \geq \sup \{ \| (T-S) (x) \| : x \in B_{X_n} \} = \sup \{ | f_n (x) | : x \in B_{X_n} \} = \Vert f_n\Vert_{X_n}=1 
\end{equation*} 
and this completes the proof.
\end{proof}

Let $w:=(w_n)_{n=1}^{\infty}$ be a non-increasing sequence of positive numbers such that $w \not\in \ell_1$ and $w_1 < 1$. Consider the Banach space $Z$ of the sequences $z$ of scalars endowed with the norm given by 
\begin{equation*} 
\|z\| := \|(1 - w)z\|_{\infty} + \|wz\|_1,
\end{equation*} 
where $\|\cdot\|_{\infty}$ and $\|\cdot\|_1$ are the usual $\ell_{\infty}$- and $\ell_1$-norms, respectively, and $zw$ denotes the point-wise multiplication of the sequences. Denote by $(e_n)_{n=1}^{\infty} \subseteq Z^*$ the sequence of functionals given by $e_n(z) := z_n$ for every $z \in Z$.

\vspace{0.2cm}

M.D. Acosta observed that $(e_n)_{n=1}^{\infty}$ is a 1-unconditional normalized basic sequence of $Z^*$ and if $X(w)$ is the closed linear span of $(e_n)_{n=1}^{\infty}$, then $X(w)^*$ is isometric to $Z$ (see \cite[Lemma 2.1]{acosta1}). Moreover, it is obtained by using the Dvoretzky-Rogers theorem and defining a suitable operator on $c_{00} \subseteq X(w)$ that if $w \in \ell_2 \setminus \ell_1$, then the set $\NA(X(w), Y)$ is {\it not} dense in $\mathcal{L}(X(w), Y)$ for any infinite dimensional strictly convex Banach space $Y$ (see \cite[Theorem 2.3]{acosta1}).

In contrast with Propositions \ref{example1} and \ref{d-star}, under the assumptions mentioned in the previous paragraph, the following result shows that the norm of $X(w)$ may not be always compatible with the action of a group of permutations of the natural numbers.

\begin{lemma} \label{auxlemma} Let $G$ be a group of permutations of the natural numbers. Let $w = (w_n)_{n=1}^{\infty}$ be a non-increasing sequence of positive numbers such that $w \not\in \ell_1$ and $w_1 < 1$. Suppose that $w$ is a non-increasing sequence of positive numbers and that the norm of $X(w)$ is $G$-invariant. Then, for every $n\in\mathbb N$, we have that $\Orb(n)$ is finite.
\end{lemma}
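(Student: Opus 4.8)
The plan is to prove the contrapositive: assuming that $\Orb(n_0)$ is infinite for some $n_0$, I will produce an element of $G$ that is not an isometry of $X(w)$, contradicting the $G$-invariance of its norm. The first move is to pass to the dual space. Since the norm of $X(w)$ is $G$-invariant, each $g \in G$ is a surjective isometry of $X(w)$ (it merely permutes the unit vector basis $(e_n)$), so its adjoint is a surjective isometry of $X(w)^*$. Under the identification $X(w)^* \cong Z$ from \cite[Lemma 2.1]{acosta1} — where the duality is the coordinatewise pairing — a one-line computation shows that the adjoint of $g$ is again a coordinate-permutation operator (by the inverse permutation), and as $g$ runs over the group $G$ these adjoints run over $G$ as well. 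Hence the norm $\|z\|_Z = \|(1-w)z\|_\infty + \|wz\|_1$ of $Z$ is invariant under the coordinate-permutation action of $G$.

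Next I would test this invariance on two-term vectors. For distinct $i \ne j$,
\[
\|e_i + e_j\|_Z = \max(1 - w_i, 1 - w_j) + (w_i + w_j) = 1 + \max(w_i, w_j),
\]
so the $G$-invariance of $\|\cdot\|_Z$ yields, for every permutation $\sigma$ in $G$ and all $i \ne j$,
\[
\max(w_i, w_j) = \max(w_{\sigma(i)}, w_{\sigma(j)}).
\]

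Finally I would derive the contradiction. Set $O := \Orb(n_0)$ and $c := \sup_{n \in O} w_n$; since $w$ is positive and non-increasing, $c = w_{\min O} > 0$ and the supremum is attained. Because $w \in \ell_2 \subseteq c_0$, only finitely many coordinates of $w$ equal $c$, so the infinite set $O$ contains two distinct indices $b \ne b'$ with $w_b, w_{b'} < c$; put $\alpha := \max(w_b, w_{b'}) < c$. As $b$ and $\min O$ lie in the same $G$-orbit, there is $\sigma$ in $G$ with $\sigma(b) = \min O$; applying the displayed identity to the pair $b \ne b'$ gives
\[
\alpha = \max(w_b, w_{b'}) = \max\big(w_{\sigma(b)}, w_{\sigma(b')}\big) = \max\big(c, w_{\sigma(b')}\big) \ge c,
\]
which contradicts $\alpha < c$. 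Therefore every $\Orb(n)$ is finite. The only steps that need a modicum of care are the two elementary norm computations and the identification of the dual action; the genuine point is to work in $Z$ rather than in $X(w)$ directly, where the corresponding two-term norm $\frac{2 - w_i - w_j}{1 - w_i w_j}$ does not isolate $\max(w_i,w_j)$ as cleanly. I do not anticipate a serious obstacle, and the hypothesis $w \in \ell_2 \setminus \ell_1$ enters only through $w \in c_0$, which guarantees that the level set $\{n : w_n = c\}$ is finite.
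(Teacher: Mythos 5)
Your proof is correct and takes essentially the same route as the paper's: both test the invariance of the explicit norm $\|z\|=\|(1-w)z\|_\infty+\|wz\|_1$ on finite sums of basis vectors and derive a contradiction from the fact that, since $w\in c_0$, the weight cannot be constant on an infinite orbit, so some permutation must move an index of maximal weight onto one of strictly smaller weight. Your write-up is slightly cleaner on two points the paper leaves implicit — you justify via adjoints why the invariance of the $X(w)$-norm transfers to the $Z$-norm in which the computation is actually performed, and you get away with the two-term vectors $e_i+e_j$ instead of the paper's $(k+2)$-term test vector — but the underlying mechanism is identical.
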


\begin{proof}
Fix a natural number $n$. Assume that $\Orb(n)$ is infinite. Without loss of generality and to simplify the notation we will assume that $\Orb(n)=\{n,n+1,n+2,\ldots\}$. Then, we have that $\max\{ w_i:i\in \Orb(n)\}=w_n$. Let $k \in \mathbb{N} \cup \{0\}$ so that $w_{n+k}=w_n$ but $w_{n+k+1}<w_n$. Notice that we can fix $g\in G$ with $g(n)>n+k$ and fix $i > n+k$ so that $g(i)> n+k$. Then $w_{i}<w_n$ and $w_{g(i)}, w_{g(n)} <w_n$. 

Consider the point 
\begin{equation*} 
x:=e_i^*+ \sum_{j=n}^{n+k} e_j^* \in X(w)^* = Z,
\end{equation*} 
where $(e_j^*)$ is the sequence of coefficient functionals associated with $(e_j)$. 
By the $G$-invariance of the norm on $X(w)$, the norm on $Z$ is $G^*$-invariant as well. Thus, we have that 
\begin{align*}
	\Vert x\Vert 
	=\Big\Vert g \Big (e_i^* +\sum_{j=n}^{n+k} e_j^* \Big)\Big\Vert =\Big \Vert e_{g(i)}^*+ \sum_{j=n}^{n+k} e_{g(j)}^* \Big\Vert
\end{align*}
for $g \in G^*$. 
Since $ w_i < w_{n}$, we have that
\begin{align*}
\Vert x\Vert
&=\Big\Vert (1-w) \Big( e_i^*+ \sum_{j=n}^{n+k} e_j^*\Big)  \Big \Vert_\infty+\Big \Vert w \Big( e_i^*+ \sum_{j=n}^{n+k} e_j^*\Big) \Big\Vert_1\\
&=(1-w_i)+(w_i+(k+1) \cdot w_{n}) \\
&=1+(k+1) \cdot w_{n}.
\end{align*}

Now, let us write $\{g(i),g(n),g(n+1),\ldots,g(n+k)\}$ as $\{\alpha_1,\alpha_2,\ldots,\alpha_{k+2}\}$ so that $\alpha_1<\alpha_2<\cdots<\alpha_{k+2}$. This implies that 
\begin{equation} \label{eq1} 
w_{\alpha_{k+2}}= \min\{w_{g(i)},w_{g(n)},w_{g(n+1)},\ldots,w_{g(n+k)}\}. 
\end{equation} 
Notice that, using the new notation $\alpha_1<\alpha_2<\cdots<\alpha_{k+2}$, we have that 
\begin{equation*}
\Vert g(x)\Vert =\Big \Vert  (1-w)\Big( \sum_{j=1}^{k+2} e_{\alpha_j}^*\Big) \Big\Vert_\infty+\Big \Vert w \Big(  \sum_{j=1}^{k+2} e_{\alpha_j}^*\Big) \Big\Vert_1
\end{equation*}
and by (\ref{eq1}) this imples that 
\begin{equation*}
	\|g(x)\| = (1-w_{\alpha_{k+2}})+\sum_{r=1}^{k+2} w_{\alpha_r} = 1+\sum_{r=1}^{k+1} w_{\alpha_r}.
\end{equation*}
Moreover, note that $g(j) \geq n$ for every $j=n, \ldots, n+k$ since $\Orb (n) = \{n, n+1, \ldots\}$. Since $g(i)$ and $g(n)$ are bigger than $n+k$, we obtain that $\alpha_{k+1} \geq n+k+1$; hence $w_{\alpha_{k+1}} \leq w_{n+k+1} < w_n$. 
Consequently, we have 
\begin{equation*}
	\|g(x)\| = 1+\sum_{r=1}^{k+1} w_{\alpha_r} \leq 1 + k \cdot w_n + w_{\alpha_{k + 1}} < 1 + (k+1) \cdot w_n = \|x\|.
\end{equation*}
This contradicts the $G$-invariance of the norm of $X(w)$.
\end{proof}

It is clear by Lemma \ref{auxlemma} that $\mathcal{F}_{G, \mathbb{N}}$ must be infinite when the domain space is $X(w)$; hence we cannot expect an analogous result of Proposition \ref{example1} (1) or  Proposition \ref{d-star} (1) in this case. Nevertheless, we can show that $\NA_G (X(w), Y)$ is {\it not} dense in $\mathcal{L}_G (X(w), Y)$ under an extra condition on the weight sequence $w$.

\begin{proposition} \label{example3} Let $G$ be a group of permutations of the natural numbers. Suppose that a non-increasing sequence $w$ in $\ell_2 \setminus \ell_1$ satisfies that $w \vert_{\Orb(n)}$ is constant for each $n \in \N$ and that there exists $C >0$ such that $ |H| < C$ for every $H \in \mathcal F_{G,\mathbb N}$. 
If $Y$ is an infinite dimensional strictly convex Banach space, then 
\begin{equation*} 	
	\overline{\NA_G (X(w), Y)} \neq \mathcal{L}_G(X(w), Y).
\end{equation*} 
\end{proposition}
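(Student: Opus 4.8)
The plan is to deduce the statement from its non-$G$-invariant predecessor \cite[Theorem 2.3]{acosta1}, by showing that the $G$-invariant part of $X(w)$ is, isometrically, again a space of the same type whose weight still lies in $\ell_2\setminus\ell_1$.

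\emph{Step 1 (reduction to the fixed subspace).} By Lemma \ref{auxlemma} every orbit $\Orb(n)$ is finite, so $\Orb(G,\mathbb N)$ consists of infinitely many finite sets $H_1,H_2,\dots$ with $|H_n|<C$ for all $n$; let $c_n$ be the constant value of $w$ on $H_n$ and set $u_n:=\sum_{k\in H_n}e_k$. Since $G$ acts transitively on each of its orbits, a point of $X(w)$ is $G$-invariant exactly when it is constant on every $H_n$; hence $X(w)_G=\overline{\spann}\{u_n:n\in\mathbb N\}$. Let $\pi\colon X(w)\to X(w)_G$ be the orbit-averaging projection $\pi(x)=\sum_n\tfrac{1}{|H_n|}\bigl(\sum_{j\in H_n}x_j\bigr)u_n$, which coincides with the $G$-symmetrization and therefore has norm one. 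Every $T\in\mathcal{L}_G(X(w),Y)$ satisfies $T=T\circ\pi$: by transitivity and $G$-invariance, $T(e_j)$ is the same for all $j\in H_n$, so $T(e_j)=\tfrac{1}{|H_n|}T(u_n)$, whence $T(x)=\sum_n\bigl(\sum_{j\in H_n}x_j\bigr)\tfrac1{|H_n|}T(u_n)=T(\pi x)$ for all $x$. Consequently $T\mapsto\restr{T}{X(w)_G}$ is an onto linear isometry of $\mathcal{L}_G(X(w),Y)$ onto $\mathcal{L}(X(w)_G,Y)$. It also maps $\NA_G(X(w),Y)$ onto $\NA(X(w)_G,Y)$: if $\restr{T}{X(w)_G}$ attains its norm at some $x_0\in S_{X(w)_G}\subseteq S_{X(w)}$, then so does $T$; and if $T\in\NA_G(X(w),Y)$ attains its norm at $x_0$, then $\|T\|=\|T(\pi x_0)\|\le\|\restr{T}{X(w)_G}\|\,\|\pi x_0\|\le\|T\|$ forces $\|\pi x_0\|=1$, and $\restr{T}{X(w)_G}$ attains its norm at $\pi x_0$. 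As an isometry preserves closures, it now suffices to show $\overline{\NA(X(w)_G,Y)}\ne\mathcal{L}(X(w)_G,Y)$.

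\emph{Step 2 (identifying $X(w)_G$).} For $x=\sum_n t_nu_n$ and $z\in B_Z$, the value $\sum_kx_kz_k$ depends only on $\bigl(\sum_{k\in H_n}z_k\bigr)_n$, and, for a prescribed $s_n=\sum_{k\in H_n}|z_k|$, the $\ell_\infty$-term of $\|z\|_Z$ is minimal when $z$ is spread equally inside each $H_n$; this gives
\[
\|x\|_{X(w)}=\sup\Bigl\{\textstyle\sum_n|t_n|\,s_n:\ s_n\ge0,\ \ \sup_n\tfrac{(1-c_n)s_n}{|H_n|}+\sum_n c_ns_n\le1\Bigr\}.
\]
Put $\tilde w_n:=\dfrac{|H_n|c_n}{1-c_n+|H_n|c_n}\in(0,1)$ and $\beta_n:=\dfrac{c_n}{\tilde w_n}=\dfrac{1-c_n}{|H_n|}+c_n\in(0,1]$. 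Then the constraint above is precisely $\|(\beta_ns_n)_n\|\le1$ in the predual space $Z$ corresponding to the weight $\tilde w$, so $\|x\|_{X(w)}=\|(t_n/\beta_n)_n\|_{X(\tilde w)}$ and the map $\sum_nt_nu_n\mapsto\sum_n(t_n/\beta_n)e_n$ is an onto isometry $X(w)_G\to X(\tilde w)$ (it sends $\beta_nu_n$ to $e_n$). After relabelling the orbits so that $\tilde w$ is non-increasing, $\tilde w$ is a non-increasing positive sequence with $\tilde w_1<1$; moreover $\tilde w_n\le(1-c_1)^{-1}|H_n|c_n$, so $\sum_n\tilde w_n^2\le(1-c_1)^{-2}C\sum_n|H_n|c_n^2=(1-c_1)^{-2}C\|w\|_2^2<\infty$, while $\tilde w_n\ge\tfrac12|H_n|c_n$ for all large $n$ together with $\sum_n|H_n|c_n=\|w\|_1=\infty$ gives $\sum_n\tilde w_n=\infty$. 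Hence $\tilde w\in\ell_2\setminus\ell_1$.

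\emph{Step 3 (conclusion).} Applying \cite[Theorem 2.3]{acosta1} to $X(\tilde w)$ with $\tilde w\in\ell_2\setminus\ell_1$ and to the infinite dimensional strictly convex space $Y$, we get $\overline{\NA(X(\tilde w),Y)}\ne\mathcal{L}(X(\tilde w),Y)$, which by Steps 1 and 2 is exactly the assertion. The main obstacle is Step 2: carrying out the norm computation that exhibits $X(w)_G$ as an isometric copy of a space $X(\tilde w)$ of the same kind, and checking that it is precisely the hypothesis $|H|<C$ that keeps the new weight $\tilde w$ in $\ell_2$ — which is why this hypothesis cannot be dispensed with. The reduction of Step 1 and the appeal to \cite{acosta1} in Step 3 are then routine.
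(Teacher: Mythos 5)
Your proof is correct, but it takes a genuinely different route from the paper's. The paper argues by direct construction: it enumerates the orbits $H_1,H_2,\dots$, picks a norm-one $H_n$-invariant functional $f_n$ on $\spann\{e_k:k\in H_n\}$, uses the Dvoretzky--Rogers theorem to get a normalized sequence $(y_n)$ in $Y$ with $\sum_k w_{n_k}y_k$ unconditionally convergent, defines the $G$-invariant operator $T(e_j)=f_n(e_j)\,y_n$ for $j\in H_n$, checks boundedness on the extreme points of $B_{X(w)}$ via the Bounded Multiplier Test, and separates $T$ from $\NA$ using the fact (from the proof of \cite[Theorem 2.3]{acosta1}) that norm-attaining operators into a strictly convex space are finitely supported. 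You instead prove a structural reduction: $\mathcal{L}_G(X(w),Y)$ is isometric to $\mathcal{L}(X(w)_G,Y)$ with norm-attaining operators corresponding, and $X(w)_G$ is isometric to a space $X(\tilde w)$ of the same type, so that \cite[Theorem 2.3]{acosta1} applies as a black box. I verified the Step~2 computation (the identities $(1-\tilde w_n)\beta_n=(1-c_n)/|H_n|$ and $\tilde w_n\beta_n=c_n$ do make the two constraints match) and the $\ell_2\setminus\ell_1$ estimates, which use $|H_n|<C$ exactly where the paper does; your reduction is arguably more informative, since it transfers \emph{any} statement about $\NA(X(\tilde w),Y)$ to $\NA_G(X(w),Y)$, at the price of a more delicate isometric identification. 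Two small points to tighten. First, $G$ need not be compact (distinct permutation operators lie at distance at least $1$ by $1$-unconditionality, so $G$ is norm-discrete), hence you cannot literally invoke the Haar-measure $G$-symmetrization to get $\|\pi\|\le 1$; instead note that on vectors supported on $H_1\cup\dots\cup H_N$ the map $\pi$ is the average over the finite image of $G$ in $\Sigma_{H_1}\times\dots\times\Sigma_{H_N}$, which acts transitively on each orbit and isometrically on $X(w)$. Second, when relabelling so that $\tilde w$ is non-increasing, observe that $\tilde w_n\le C c_n/(1-c_1)\to 0$, so a non-increasing rearrangement exists, and that permuting the weight together with the coordinates is an isometry of the corresponding spaces $X(\cdot)$.
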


\begin{proof} By Lemma \ref{auxlemma}, we have that $\Orb(n)$ is finite for every natural number $n$. 
Consider an enumeration $H_1, H_2, \ldots$ of the different orbits with $|H_n| < C$ for every $n \in \N$. 
Without loss of generality, we may assume that 
\[
H_1=\{1,\ldots,{n_1}\},H_2=\{{n_1+1},\ldots,{n_2}\},\ldots,H_k=\{{n_{k-1}+1},\ldots,{n_k}\}, \ldots. 
\]
For each $n \in \N$, consider the finite dimensional subspace of $X(w)$ given by $X_n=\spann\{e_k:k\in H_n \}$ and a norm-one functional $f_n$ on $X_n$ that is invariant under the action of the group $H_n$ (considered as a group in $\mathcal{L} (X_n)$). 
Since $(w_{n_k}) \in \ell_2$, as a consequence of the Dvoretzky-Rogers theorem, there exists a normalized sequence of vectors $(y_k)_{k=1}^{\infty}$ in $Y$ so that the series $\sum_{k=1}^\infty {w}_{n_k} y_k$ converges unconditionally (see \cite[Theorem 1.2]{DJT}). Let us consider the $G$-invariant operator $T$ from $X(w)$ into $Y$ given by 
	\[
		T(e_j)=f_n (e_j) y_n
	\]
 where $j\in H_n$ for some $n \in \N$. 
	
	 Arguing as in \cite[Lemma 2.1]{acosta1} it can be shown that $B_{X(w)}=\overline{\text{co}} E$ where
	 \[
	 	E=\Big\{\theta_m(1-w_m)e_m+\sum_{i=1}^{n_k} \theta_iw_ie_i:\ m, k\in\mathbb N, \vert \theta_i\vert =1,\ \forall i\Big\}.
	 \]
Thus, to show that $T$ is bounded on $X(w)$ it is enough to show that $T$ is bounded on $E$. For this, consider the point
\begin{equation*} 
 x :=\theta_m(1-w_m)e_m+\sum_{i=1}^{n_k} \theta_iw_ie_i\in E. 
\end{equation*} 
Note that, by the $G$-invariance of the norm, setting $n_0=0$ we have that $w_{{n_j}+1+k}=w_{n_j+1}$ for $k=0,1,\ldots,n_{j+1}-n_j-1$. Therefore, 
\begin{equation*}
 \sum_{j=1}^{k} \sum_{i=n_{j-1} +1}^{n_j} \theta_i w_i f_{j} ( e_i) y_j = \sum_{j=1}^{k} w_{n_j} y_j \Big( \sum_{i=n_{j-1} +1}^{n_j} \theta_i f_j (e_i) \Big)
\end{equation*} 
and 
\begin{align*}
\Vert T(x)\Vert &\leq \| T(\theta_m (1-w_m)e_m) \| + \Big \| \sum_{j=1}^{k} \sum_{i=n_{j-1} +1}^{n_j} \theta_i w_i f_{j} ( e_i) y_j \Big\| \\
&\leq 1 + \Big \| \sum_{j=1}^{k} w_{n_j} y_j \Big( \sum_{i=n_{j-1} +1}^{n_j} \theta_i f_j (e_i) \Big) \Big\|.
\end{align*}  Note from the Bounded Multiplier Test \cite[Theorem 1.6]{DJT} that 
\[
\Big \| \sum_{j=1}^{k} w_{n_j} y_j \Big( \sum_{i=n_{j-1} +1}^{n_j} \theta_i f_j (e_i) \Big) \Big\| \leq C \|v\|, 
\]
where $v : Y^* \rightarrow \ell_1$ is the compact operator given by $v (y^*) = (y^* (w_{ } y_{n_m}))_{m\in\N} $ for every $y^* \in Y^*$ \cite[Theorem 1.9]{DJT}. It follows that
\[
\|T(x) \| \leq 1 + C \| v \|. 
\]
Thus, $T$ is a linear and continuous $G$-invariant operator.

Note from the proof of \cite[Theorem 2.3]{acosta1} that, for every norm-attaining operator $S$ from $X(w)$ into $Y$, there exists $N \in \N$ so that $S(e_k)=0$ for $k>N$. Thus, for any $n \in \N$ with $\min H_n >N$, we have that 
\[
\|T-S \| \geq \sup \{ \| (T-S)( x) \| : x \in B_{X_n} \} \geq \sup \{ | f_n (x) | : x \in B_{X_n} \} = 1. 
\]
Therefore, $T$ cannot be approximated by norm-attaining operators as desired.
\end{proof}

Notice that so far all the domains in the preceding examples were sequence spaces. Next, we would like to consider classical function spaces, such as $L_1 [0,1]$ and $C(K)$. J. Johnson and J. Wolfe \cite{JW} showed that there exists a compact metric space $K$ such that $\NA(L_1[0,1], C(K))$ is {\it not} dense in $\mathcal{L}(L_1[0,1], C(K))$, and W. Schachermayer observed that $\NA(L_1[0,1], C[0,1])$ is {\it not} dense in $\mathcal{L}(L_1[0,1], C[0,1])$ \cite{S}. Proposition \ref{L1-case} shows that an analogous result for $G$-invariant operators goes in the opposite direction.

\begin{proposition} \label{L1-case} Let $Y$ be any real or complex infinite dimensional Banach space with Schauder basis. Then, there exists a group $G \subseteq \mathcal{L}(L_1[0,1])$ such that every $G$-invariant operator from $L_1[0,1]$ into $Y$ is norm-attaining, that is, 
\begin{equation*}
\NA_{G}(L_1[0,1], Y) = \mathcal{L}_G(L_1[0,1], Y).
\end{equation*} 	
\end{proposition}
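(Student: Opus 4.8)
The idea is to force every $G$-invariant operator to factor through a finite-dimensional quotient of $L_1[0,1]$, so that norm-attainment becomes automatic (a bounded operator on a finite-dimensional space always attains its norm, and on $L_1$ of a finite measure space the unit ball is the closed convex hull of finitely many extreme points, which themselves attain).

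\medskip

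\emph{Step 1: Choose the group.} Fix a sequence of refining finite partitions of $[0,1]$ into dyadic-type intervals, say $\mathcal{P}_n = \{ I^n_1, \dots, I^n_{2^n}\}$ with $|I^n_k| = 2^{-n}$, and for each $n$ let $G_n$ be the group of measure-preserving transformations of $[0,1]$ that permute the atoms of $\mathcal{P}_n$ among themselves, acting on $L_1[0,1]$ by composition. Each such transformation induces an isometry of $L_1[0,1]$ (measure-preserving maps preserve the $L_1$-norm), so $G_n \subseteq \mathcal{L}(L_1[0,1])$ is a finite group and the norm is $G_n$-invariant. The natural candidate is $G := \bigcup_n G_n$, or rather the group generated by this union; one checks this is a (countable) group of isometries of $L_1[0,1]$, compatible with the norm as required.

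\medskip

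\emph{Step 2: Identify the invariant vectors and the action on functions.} Given the refining structure, a function $f \in L_1[0,1]$ that is $G_n$-invariant must be constant on each atom of $\mathcal{P}_n$; hence a $G$-invariant function must be constant on each atom of every $\mathcal{P}_n$, which (by Lebesgue differentiation, since the partitions refine to points) forces it to be constant a.e. So $(L_1[0,1])_G$ is one-dimensional. More importantly, for the operator statement we use Proposition~\ref{lemma-caract-dual-bidual}: $T$ is $G$-invariant iff $T^* y^*$ is $G$-invariant for every $y^* \in Y^*$, i.e. iff every functional $T^* y^* \in L_\infty[0,1]$ is fixed by the (adjoint) action of $G$ on $L_\infty$. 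The adjoint action of $G_n$ on $L_\infty$ again permutes the atoms of $\mathcal{P}_n$, so a $G$-fixed element of $L_\infty[0,1]$ must be constant on each atom of each $\mathcal{P}_n$ and hence constant a.e. Thus $T^* y^*$ is a constant multiple of $\mathbf 1_{[0,1]}$ for every $y^*$, which means $T$ factors as $T = S \circ \Lambda$ where $\Lambda \colon L_1[0,1] \to \mathbb{K}$, $\Lambda(f) = \int_0^1 f$, and $S \colon \mathbb{K} \to Y$ is a fixed vector. Any such operator trivially attains its norm on the constant function $\mathbf 1$, giving $\mathcal{L}_G(L_1[0,1], Y) = \NA_G(L_1[0,1], Y)$.

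\medskip

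\emph{Main obstacle.} The delicate point is Step 2: one must be careful that the group of measure-preserving transformations actually acts on $L_1[0,1]$ as bounded \emph{invertible} operators and that its adjoint action on $L_\infty$ is correctly computed so that the only common fixed points are the constants; the argument genuinely needs the partitions to refine down to points (equivalently, that the corresponding $\sigma$-algebras generate the Borel $\sigma$-algebra up to null sets), which is where Lebesgue differentiation enters. One also needs to double-check that taking the \emph{group generated by} $\bigcup_n G_n$ does not shrink the set of invariant vectors back up — but it cannot, since more relations only cut down the invariants, and already $\bigcup_n G_n$ forces constancy. I do not expect to need compactness of $G$ here (the argument is purely algebraic via Proposition~\ref{lemma-caract-dual-bidual}), which is fortunate since this $G$ is infinite and not norm-compact; if a cleaner statement were wanted one could instead take $G$ to be a suitable compact group and symmetrize, but the factorization argument above already suffices.
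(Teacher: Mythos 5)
Your proof is correct in outline but takes a genuinely different route from the paper. The paper takes $G$ to be the group induced by \emph{all} measure-preserving maps of $[0,1]$ and then invokes \cite[Corollary 5.4]{AGPZ} to conclude that each coordinate functional $T^* e_i^*$ is a multiple of $x \mapsto \int_0^1 x$; your proposal replaces this black box by a small, explicitly constructed group of dyadic interval permutations together with an elementary argument that the only $G$-fixed elements of $L_\infty[0,1] = L_1[0,1]^*$ are the constants. Both proofs then conclude identically: $T$ factors through the rank-one functional $f \mapsto \int_0^1 f$, which attains its norm at $\mathds{1}$. Your version is more self-contained, makes explicit that no compactness of $G$ is used (the paper's group is not compact either), and shows that the Schauder basis hypothesis on $Y$ plays no real role in the factorization, since one can work directly with $T^* y^*$ for arbitrary $y^* \in Y^*$ via Proposition \ref{lemma-caract-dual-bidual}.

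One step needs repair. If $G_n$ is the \emph{finite} group of piecewise translations permuting the atoms of $\mathcal{P}_n$, then a $G_n$-fixed element $h$ of $L_\infty[0,1]$ is \emph{not} forced to be constant on each atom: invariance under the swap of two atoms only says that the restrictions of $h$ to those atoms are translates of one another (a sawtooth function is fixed by the level-one swap but is not constant on either half). So the assertion ``a $G$-fixed element of $L_\infty[0,1]$ must be constant on each atom of each $\mathcal{P}_n$'' is false as written, and the appeal to Lebesgue differentiation does not apply directly. The conclusion is nevertheless true and can be recovered in either of two ways: (i) note that $G$-invariance forces $\int_I h$ to take the same value on every atom $I$ of $\mathcal{P}_n$, so the conditional expectation of $h$ with respect to $\mathcal{P}_n$ equals the constant $\int_0^1 h$ for every $n$, and these conditional expectations converge to $h$ a.e.\ by martingale convergence (this is where the refinement of the partitions to points is genuinely used); or (ii) enlarge each $G_n$ to the group of \emph{all} invertible measure-preserving maps carrying atoms of $\mathcal{P}_n$ onto atoms, at the cost of $G_n$ no longer being finite, in which case invariance does force constancy on atoms. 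With either fix the remainder of your argument goes through.
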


\begin{proof} Let $Y$ be any infinite dimensional Banach space over $\K=\R$ or $\K=\C$ with Schauder basis $(e_i)$ and consider $G \subseteq \mathcal{L}(L_1[0,1])$ to be the group induced by all measure preserving mappings $\varphi:[0,1] \rightarrow [0,1]$, that is, $G$ is the set
	\begin{equation*}
		\Big\{ R: L_1[0,1] \rightarrow L_1[0,1]: R(x) = x \circ \phi, \, \phi:[0,1] \rightarrow [0,1] \ \mbox{a measure-preserving mapping} \Big \} 
	\end{equation*}
	endowed with the topology induced by $\mathcal{L}(L_1[0,1])$ (see Section 5 of \cite{AGPZ}). Given $T \in \mathcal{L}_G(L_1[0,1], Y)$, we will prove that $T$ attains its norm. We can write 
	\begin{equation*}
		T(x) = \sum_{i=1}^{\infty} (T^* e_i^*)(x)e_i \ \ \ \ (x \in L_1[0,1]). 
	\end{equation*}
	Thus, $T^* e_i^* \in \mathcal{L}_G(L_1[0,1], \K)$ for every $i \in \N$ (see Proposition \ref{lemma-caract-dual-bidual}). It follows from the proof of \cite[Corollary 5.4]{AGPZ} that there exists a polynomial $P_i: \K \rightarrow \K$ such that
	\begin{equation*}
		(T^* e_i^*)(x) = P_i \Big( \int_0^1 x(t)\, dt \Big). 
	\end{equation*}
	Let us notice that, in this case, $P_i \in \mathcal{L}(\K) = \K^* = \K$ and write $P_i = \alpha_i \in \K$ for every $i \in \N$. Thus, 
	\begin{equation*}
		(T^* e_i^*)(x) = \alpha_i \int_0^1 x(t)\, dt
	\end{equation*}	
	for every $i \in \N$. It follows that 
	\begin{equation*}
		T(x) = \sum_{i=1}^{\infty} (T^* e_i^*)(x) e_i = \sum_{i=1}^{\infty} \Big( \alpha_i \int_0^1 x(t)\, dt \Big) e_i 
	\end{equation*} 
	for every $x \in L_1 [0,1]$. This, in particular, shows that 
	\[
	T(x) =  T \Big( \Big( \int_0^1 x(t) \, dt \Big) \mathds{1} \Big), 
	\]
	for every $x \in L_1 [0,1]$, where $\mathds{1} \in L_1[0,1]$ is the constant function given by $\mathds{1}(t) = 1$ for every $t \in [0,1]$. Let us define the mapping $\widetilde{T}: \K \rightarrow Y$ given by $\widetilde{T}(\lambda) := T(\lambda \cdot \mathds{1})$ for every $\lambda \in \K$. Then, on the one hand, we have that $\|\widetilde{T}\| \leq \|T\|$, and, on the other hand,
	\begin{equation*}
		\|\widetilde{T}\| \geq \Big\| \widetilde{T} \Big(\int_0^1 x(t) \, dt \Big) \Big\| = \Big\| T \Big( \Big( \int_0^1 x(t) \, dt \Big) \mathds{1} \Big) \Big\| = \|T(x)\| 
	\end{equation*}
	for every $x \in L_1[0,1]$ with $\|x\| \leq 1$. This shows that $\|\widetilde{T}\| = \|T\|$. Since $\widetilde{T}$ is defined on the finite dimensional space $\K$, it attains its norm at, let us say, $\lambda_0 \in B_{\K}$. Therefore, $T$ attains its norm at $\lambda_0 \cdot \mathds{1} \in B_{L_1[0,1]}$ as we wanted to prove.
\end{proof}

\begin{remark} J.J. Uhl \cite{uhl} proved that if $Y$ is any strictly convex Banach space and the set $\NA(L_1(\mu), Y)$ is dense in $\mathcal{L}(L_1(\mu), Y)$ (here we are assuming that $\mu$ is a finite measure), then $Y$ must satisfy the Radon-Nikod\'ym property. Nevertheless, Proposition \ref{L1-case} shows that even if $Y$ is an infinite dimensional strictly convex Banach space with Schauder basis which does \emph{not} have the Radon-Nikod\'ym property, we have that $\mathcal{L}_G(L_1[0,1], Y) = \NA_G(L_1[0,1], Y)$ when $G$ is the group induced by all measure preserving mappings on $[0,1]$. Therefore, Uhl's analogous result {\it does not} hold true in this context. 
\end{remark}

J. Lindenstrauss characterized when the set $\NA (C(K), Y)$ is dense in $\mathcal{L} (C(K), Y)$ for every Banach space $Y$. Indeed, he showed that, for a compact metric space $K$, the set $\NA (C(K), Y)$ is dense in $\mathcal{L} (C(K), Y)$ for every Banach space $Y$ if and only if $K$ is finite (see \cite[Proposition 2.(b)]{L}). However, the following result shows that this characterization \emph{does not} hold in the context of $G$-invariant norm-attaining operators when we assume that $Y$ has a Schauder basis. 

\begin{proposition} There exists a group $G \subseteq \mathcal{L}(C[0,1])$ such that, for every real or complex infinite dimensional Banach space with Schauder basis $Y$, we have that every $G$-invariant operator from $C[0,1]$ into $Y$ attains its norm, that is, 
\begin{equation*} 	
\NA_G(C[0,1], Y) = \mathcal{L}_G(C[0,1], Y).
\end{equation*} 
\end{proposition}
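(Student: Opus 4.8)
The plan is to imitate the proof of Proposition~\ref{L1-case}: pick a group $G$ so rich that the only $G$-invariant continuous linear functionals on $C[0,1]$ are the scalar multiples of a single, explicitly norm-attaining functional, and then show that every $G$-invariant operator factors through it. Concretely, I would let $G \subseteq \mathcal{L}(C[0,1])$ be the group of all composition operators $C_\varphi \colon f \mapsto f \circ \varphi$, where $\varphi$ runs over the homeomorphisms of $[0,1]$ onto itself, endowed with the relative topology of $\mathcal{L}(C[0,1])$. Each $C_\varphi$ is a surjective linear isometry of $C[0,1]$, so the norm is $G$-invariant, and $C_\varphi^{-1} = C_{\varphi^{-1}}$, so $G$ is a group of invertible operators. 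Note that, in contrast with Section~\ref{sec:sep}, no compactness of $G$ is needed here.

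The crux is the identification of the invariant functionals: identifying $C[0,1]^*$ with the space $M[0,1]$ of regular Borel measures, a $G$-invariant $\mu$ must be of the form $\mu = c(\delta_0 + \delta_1)$ for some scalar $c$. First, $G$-invariance of $\mu$ means $\varphi_*\mu = \mu$ for every homeomorphism $\varphi$, where $\varphi_*$ denotes push-forward; since push-forward along a homeomorphism commutes with passing to the total variation, $|\mu|$ is $G$-invariant too, so we may assume $\mu \geq 0$. Then a squeezing argument shows $\mu$ is supported on $\{0,1\}$: fix two disjoint closed subintervals $I_1, I_2 \subseteq (0,1)$ of positive length, and for each $n$ choose a homeomorphism carrying $[\tfrac1n, 1-\tfrac1n]$ onto $I_j$; invariance gives $\mu([\tfrac1n, 1-\tfrac1n]) = \mu(I_j)$ for all $n$, so letting $n \to \infty$ yields $\mu((0,1)) = \mu(I_1) = \mu(I_2)$, whence $2\mu((0,1)) \leq \mu((0,1))$ and therefore $\mu((0,1)) = 0$. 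Finally, invariance under the flip $t \mapsto 1-t$ forces $\mu(\{0\}) = \mu(\{1\})$, giving the claim; the signed and complex cases follow by applying this to the positive and negative parts of the Jordan decomposition and to real and imaginary parts, which are again $G$-invariant.

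With this at hand, the rest is a verbatim adaptation of Proposition~\ref{L1-case}. Let $T \in \mathcal{L}_G(C[0,1], Y)$ and let $(e_i)$ be a Schauder basis of $Y$ with coordinate functionals $(e_i^*)$. Writing $T(f) = \sum_{i=1}^\infty (T^* e_i^*)(f) e_i$ and using that each $T^* e_i^*$ is $G$-invariant (Proposition~\ref{lemma-caract-dual-bidual}), we get $(T^* e_i^*)(f) = \alpha_i\,(f(0) + f(1))$ for some scalars $\alpha_i$, hence
\begin{equation*}
 T(f) = (f(0)+f(1)) \sum_{i=1}^\infty \alpha_i e_i = \tfrac12\,(f(0)+f(1))\, T(\mathds{1})
\end{equation*}
for every $f$, where $\mathds{1}$ is the constant function $1$ and the series converges to $\tfrac12 T(\mathds{1})$ by continuity of $T$. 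Setting $y_0 := \tfrac12 T(\mathds{1})$ we obtain $\|T\| = \sup_{\|f\|_\infty \leq 1} |f(0)+f(1)|\,\|y_0\| = 2\|y_0\| = \|T(\mathds{1})\|$, so $T$ attains its norm at $\mathds{1} \in S_{C[0,1]}$. As $T$ was arbitrary, $\NA_G(C[0,1], Y) = \mathcal{L}_G(C[0,1], Y)$.

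I expect the only genuine obstacle to be the lemma on $G$-invariant measures — making the squeezing argument and the reduction to positive measures clean — but this is routine measure theory; once it is in place the conclusion is immediate from the argument already used for $L_1[0,1]$.
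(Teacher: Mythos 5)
Your proof is correct, and you choose the same group as the paper (composition operators induced by homeomorphisms of $[0,1]$) and the same overall skeleton (expand $T$ along the basis of $Y$, use Proposition \ref{lemma-caract-dual-bidual} to make each $T^*e_i^*$ a $G$-invariant functional, and reduce to a finite-dimensional statement). Where you genuinely diverge is in the key lemma identifying the $G$-invariant functionals: the paper cites \cite[Theorem 3.1]{AGPZ} to write $(T^*e_i^*)(x)=F_i(x(0),x(1))$ for a symmetric linear $F_i$ on $\C^2$, then passes to the auxiliary operator $\widetilde T$ on $(\C^2,\|\cdot\|_\infty)$, which attains its norm at some $(z_0,w_0)$ that is pulled back to $x_0=z_0f+w_0(1-f)$. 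You instead prove the characterization from scratch via the Riesz representation theorem: a $G$-invariant measure has $G$-invariant total variation, the squeezing argument kills the mass of $(0,1)$, and the flip $t\mapsto 1-t$ forces $\mu=c(\delta_0+\delta_1)$; this argument is sound (the piecewise-linear homeomorphisms carrying $[\tfrac1n,1-\tfrac1n]$ onto a fixed subinterval exist, and continuity from below gives the limit). Your route buys a self-contained, elementary proof that also yields the sharper conclusion that every $T\in\mathcal L_G(C[0,1],Y)$ is of rank at most one, namely $T(f)=\tfrac12(f(0)+f(1))T(\mathds{1})$, with the norm attained at the explicit point $\mathds{1}$; the paper's route outsources the hard step to \cite{AGPZ} but is formulated so that it would still work if the invariant functionals only factored linearly through $(x(0),x(1))$ without the symmetry under the swap. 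Both correctly avoid any compactness assumption on $G$.
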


\begin{proof} Let $Y$ be an infinite dimensional Banach space over $\K=\R$ or $\K=\C$ with Schauder basis $(e_i)$. Let us consider the group $G \subseteq \mathcal{L}(C[0,1])$ of composition operators on $C[0,1]$ defined as 
	\begin{equation*}
		\Big\{ \gamma: C[0,1] \rightarrow C[0,1]: \gamma(f) = f \circ \phi \ \mbox{for some homeomorphism} \ \phi: [0,1] \rightarrow [0,1] \Big\}.
	\end{equation*}
We will prove that every $T \in \mathcal{L}_G(C[0,1], Y)$ is norm-attaining. Let us write 
	\begin{equation*}
		T(x) = \sum_{i=1}^{\infty} (T^* e_i^*)(x) e_i \ \ \ \ (x \in C[0,1]). 
	\end{equation*}
	Therefore, $T^* e_i^* \in \mathcal{L}_G(C[0,1], \K)$ for every $i \in \N$ (see Proposition \ref{lemma-caract-dual-bidual}). By adapting the proof of \cite[Theorem 3.1]{AGPZ}, there exists a symmetric function $F_i$ from $\K^2$ to $\K$ such that
	\begin{equation*}
		(T^* e_i^*)(x) = F_i(x(0), x(1))
	\end{equation*}
	for every $i \in \N$ with $x \in C[0,1]$. Define $\widetilde{T}: (\K^2, \|\cdot\|_{\infty}) \rightarrow Y$ by 
	\begin{equation*}
		\widetilde{T}(z, w) := \sum_{i=1}^{\infty} F_i(z, w) e_i \ \ \ \ ((z, w) \in \K^2).
	\end{equation*}
	So, $\widetilde{T}$ is a bounded linear operator defined on a finite dimensional space such that 
	\begin{align*}
		\|T\| &= \sup_{x \in B_{C[0,1]}} \Big\| \sum_{i=1}^{\infty} F_i(x(0), x(1)) e_i \Big\| \\
		&= \sup_{(z, w) \in B_{(\K^2,\|\cdot\|_{\infty})}} \Big\| \sum_{i=1}^{\infty} F_i(z, w) e_i \Big\| 
		= \sup_{(z, w) \in B_{(\K^2,\|\cdot\|_{\infty})}} \|\widetilde{T}(z, w)\|
		= \|\widetilde{T}\|. 	
	\end{align*} 
	Let $(z_0, w_0) \in B_{(\K^2,\|\cdot\|_{\infty})}$ be such that $\|\widetilde{T}\| = \|\widetilde{T}(z_0, w_0)\|$ and consider the point
	\begin{equation*}
		x_0 := z_0 \cdot f + w_0\cdot (1 - f), 
	\end{equation*}
	where $f(t) := 1 - t$ for every $t \in [0,1]$. Then, $x_0 \in B_{C[0,1]}$, $x_0(0) = z_0$, $x_0(1) = w_0$, and 
	\begin{eqnarray*}
		\|T(x_0)\| =  \Big\| \sum_{i=1}^{\infty} (T^* e_i^*)(x_0)e_i \Big\| 
		= \Big\| \sum_{i=1}^{\infty} F_i(z_0, w_0) e_i \Big\| = \|\widetilde{T}(z_0, w_0)\| = \|\widetilde{T}\| = \|T\|.
	\end{eqnarray*}
\end{proof}

\subsection{Properties $\beta$ of Lindenstrauss and $\alpha$ of Schachermayer}

A Banach space $X$ (respectively, $Y$) has \emph{property A} (respectively, \emph{property B}) if the set $\NA (X, Z)$ is dense in $\mathcal{L} (X,Z)$ (respectively, $\NA (Z, Y)$ is dense in $\mathcal{L} (Z, Y)$) for every Banach space $Z$ (see \cite{L}). It is shown that $\ell_1$ has property A while $c_0$ and $\ell_\infty$ have \emph{property $\beta$}, which is a strengthening of property B. Motivated by the notion of property $\beta$, W. Schachermayer \cite{S} considered \emph{property} $\alpha$ and showed this property implies property $A$. Besides, it was rather recently introduced \emph{property quasi-$\alpha$} and \emph{property quasi-$\beta$} and observed that those properties are, in fact, new sufficient conditions for properties A and B, respectively (for more details we recommend \cite{AAP, CS}). 
Our aim in this section is to discuss the analogues of properties $\alpha$ and $\beta$ (as well as quasi-$\alpha$ and quasi-$\beta$) in the context of $G$-invariant norm-attaining operators. We will start by generalizing property $\alpha$ and its strengthening property quasi-$\alpha$.

\begin{definition} \label{propertyalpha} Let $X$ be a Banach space and $G \subseteq \mathcal{L} (X)$ be a group of isometries. 
\begin{itemize}
\item[(a)]
$X$ is said to have {\it property $\alpha$-$G$} if there exist $\{(x_\lambda, x_\lambda^*): \lambda\in \Lambda\}\subseteq X \times X^*$ and a real number $0\leq\rho < 1$ satisfying that
	\begin{itemize}
		\setlength\itemsep{0.3em}
		\item[(i)] $x_{\lambda}^*$ is $G$-invariant for every $\lambda \in \Lambda$,
		\item[(ii)] $x_{\lambda}^*(x_{\lambda}) = \Vert x_{\lambda}^*\Vert =\Vert x_{\lambda}\Vert =1$ for every $\lambda \in \Lambda$,
		\item[(iii)] $|x_{\lambda}^*(x_{\mu})| \leq \rho$ for every $\lambda,\mu\in \Lambda$, $\lambda \not= \beta$, and
		\item[(iv)] $B_X$ is the closed absolutely convex hull of the set $\{g(x_\lambda) : \lambda\in\Lambda, g\in G\}$. 
		\end{itemize}
	
\vspace{0.2cm} 	
		
	\item[(b)] $X$ is said to have {\it property quasi-$\alpha$-$G$} if there exist $A := \{x_{\lambda} \in S_X: \lambda \in \Lambda\} \subseteq S_X$, $A^* := \{x_{\alpha}^* \in S_{X^*}: \lambda \in \Lambda\}$, and a function $\rho: A \rightarrow \R$ such that
	\begin{itemize}
		\setlength\itemsep{0.3em}
		\item[(i)] $x_{\lambda}^*$ is $G$-invariant for every $\lambda \in \Lambda$, 
		\item[(ii)] $x_{\lambda}^*(x_{\lambda}) = 1$ for every $\lambda \in \Lambda$, 
		\item[(iii)] $|x_{\lambda}^*(x_{\mu})| \leq \rho(x_{\lambda}) < 1$ for every $x_{\lambda}^* \in A^*$, $x_{\lambda}, x_{\mu} \in A$, $\lambda \not= \mu$, and 
		\item[(iv)] for every $e \in \ext{B_{X^{**}}}$, there is a subset $A_e \subseteq A$ and a scalar $t$ with $|t| = 1$ such that $te \in \overline{Q(G(A_e))}^{w^*}$ and $r_e := \sup \{ \rho(x): x \in A_e\} < 1$, where $Q: X \hookrightarrow X^{**}$ is the canonical embedding of $X$ in $X^{**}$.
	\end{itemize}
	\end{itemize} 
\end{definition}

Let us briefly notice that in Definition \ref{propertyalpha}; if we assume that $G$ is the trivial group consisting of only the identity mapping, then property $\alpha$-$G$ coincides with property $\alpha$ of Schachermayer as in \cite{S} and property quasi-$\alpha$-$G$ coincides with property quasi-$\alpha$ as in \cite{CS}. Let us first show the following example of a space $X$ satisfying property $\alpha$-${{G}}$ for a group $G$ of permutations of the natural numbers.

\begin{example} \label{example-ell1} Set $X = \ell_1$ and consider $G$ to be a group of permutations on $\N$. We prove that $X$ satisfies property $\alpha$-${{G}}$ with constant $\rho = 0$ where $G$ is considered as a group in $\mathcal{L} (X)$. In order to do this, let us construct the set $\{(x_\lambda, x_\lambda^*): \lambda\in \Lambda\}\subseteq X \times X^*$. Denote by $(e_n)_{n \in \N}$ the canonical basis of $X$ and set $n_0=1$. Let $x_1 := e_1$ and define $\N_1 := \N \setminus \{g(1): g \in G\}$. Now, take $n_1 := \min \N_1$, define $x_2 := e_{n_1}$ and consider $\N_2 := \N_1 \setminus \{g(n_1): g \in G\}$. Inductively, we construct the sequences $(x_k) \subseteq S_X$ and $(\N_k) \subseteq \N$ with $n_k := \min \N_k$, $x_k := e_{n_{k-1}}$, and $\N_k := \N_{k-1} \setminus \{g(n_{k-1}): g \in G\}$. To show that $X$ satisfies property $\alpha$-${G}$ we consider two cases.
		
	\vspace{0.2cm}
	\noindent
	{\bf Case 1}: Suppose that $\N_k \not= \emptyset$ for every $k \in \N$. 
	\vspace{0.2cm}

	Given $k \in \N$, let us define $x_k^*: \ell_1 \rightarrow \K$ by
	\begin{equation*}
		x_k^*(e_n) := \max \left\{ e_{g(n_{k-1})}^* (e_n): g \in G \right\}
	\end{equation*}	
	for every $n \in \N$. Notice that, for every $k \in \N$, we have that $x_k^* \in B_{\ell_{\infty}}$ and also that 
	\begin{equation*}
		x_k^*(x_k) = x_k^*(e_{n_{k-1}}) = \max \left\{ e_{g(n_{k-1})}^*(e_{n_{k-1}}): g \in G \right\} = 1. 
	\end{equation*}	
	In particular, $\|x_k^*\| = 1$ for every $k \in \N$ and this gives condition (ii). Let us prove now condition (iii). Assume that $l \not= k$ and observe that
	\begin{equation} \label{emptyset} 
		\left\{ g(n_{k-1}): g \in G \right\} \cap \left\{ n_{l-1} \right\} = \emptyset.
	\end{equation}
Therefore, if $l \not= k$, then 
	\begin{equation*}
		x_k^*(x_l) = x_k^*(x_{n_{l-1}}) = \max \left\{ e_{g(n_{k-1})}^*(e_{n_{l-1}}): g \in G \right\} \stackrel{(\ref{emptyset})}{=} 0
	\end{equation*}
	and this proves condition (iii). To prove condition (iv), let us fix $n \in \N$. As $n_k > n_{k-1}$ for every $k \in \N$, there must exist $k_0 \in \N$ such that $n \not\in \N_{k_0}$. Let us consider $k_0 \in \N$ to be such that $n \not\in \N_{k_0}$ but $n \in \N_{k_0-1}$. Therefore, $n = g(n_{k_0-1})$ for some $g \in G$ and this shows that
	\begin{equation*}
		\left\{ {g}(x_n): n \in \N, {g} \in {G} \right\} = \{ e_n : n \in \N \}
	\end{equation*}	
	and, in particular, we have condition (iv). Finally, let us prove that each $x_k^*$ is ${G}$-invariant for every $k \in \N$. Indeed, let $h \in {G}$ and fix $k \in \N$. Then, we have that 
	\begin{eqnarray*}
		x_k^*(h(e_n)) = x_k^*(e_{h(n)}) = \max \left\{ e_{(h^{-1} \circ g)(n_{k-1})}^*(e_n): g \in G \right\} = x_k^*(e_n)
	\end{eqnarray*}
	and this gives condition (i). Therefore, $X$ has property $\alpha$-${G}$.

	\vspace{0.2cm}
	\noindent
	{\bf Case 2}: Suppose now that there is $N \in \N$ such that $\N_N = \emptyset$ and $\N_{N-1} \not= \emptyset$. 
	\vspace{0.2cm}
	
	Define, for every $k = 1, \ldots, N$, the functional
	\begin{equation*}
		x_k^*(e_n) := \max \left\{ e_{g(n_{k-1})}^*(e_n): g \in G \right\}
	\end{equation*} 
	and we follow the proof of Case 1 to show that $X$ has property $\alpha$-${G}$.
\end{example}

We now present the counterpart for \cite[Theorem 2.1.(i)]{CS} in the context of invariant groups. In what follows, we denote by $\overline{\aco}(A)$ the closed absolutely convex hull of a set $A$.

\begin{proposition} \label{alpha-quasialpha} Let $X$ be a Banach space and $G \subseteq \mathcal{L}(X)$ be a group of isometries. If $X$ has property $\alpha$-$G$, then it also has property quasi-$\alpha$-$G$. 
\end{proposition}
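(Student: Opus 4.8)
The plan is to show that property $\alpha$-$G$ directly furnishes the data required by property quasi-$\alpha$-$G$, essentially by "forgetting" some of the stronger hypotheses. Suppose $X$ has property $\alpha$-$G$ with system $\{(x_\lambda, x_\lambda^*) : \lambda \in \Lambda\} \subseteq X \times X^*$ and constant $\rho \in [0,1)$. I would set $A := \{x_\lambda : \lambda \in \Lambda\}$, $A^* := \{x_\lambda^* : \lambda \in \Lambda\}$, and take the function $\rho(\cdot) : A \longrightarrow \R$ to be constantly equal to $\rho$. Conditions (i), (ii), and (iii) of property quasi-$\alpha$-$G$ then follow immediately from conditions (i), (ii), (iii) of property $\alpha$-$G$: $G$-invariance of each $x_\lambda^*$ is verbatim the same, the equality $x_\lambda^*(x_\lambda) = 1$ is part of condition (ii), and $|x_\lambda^*(x_\mu)| \leq \rho = \rho(x_\lambda) < 1$ for $\lambda \neq \mu$ is exactly condition (iii).

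The only point requiring genuine work is condition (iv) of property quasi-$\alpha$-$G$: for every extreme point $e \in \ext{B_{X^{**}}}$ one must produce a subset $A_e \subseteq A$ and a scalar $t$ with $|t|=1$ such that $te \in \overline{Q(G(A_e))}^{w^*}$ and $\sup\{\rho(x) : x \in A_e\} < 1$. Since $\rho$ is constant equal to $\rho < 1$, the second requirement is automatic once $A_e$ is nonempty, so the whole issue is the first requirement. Here I would use condition (iv) of property $\alpha$-$G$, namely that $B_X$ is the closed absolutely convex hull of $\{g(x_\lambda) : \lambda \in \Lambda, g \in G\}$. Passing to the bidual, $B_{X^{**}}$ is the weak$^*$-closed absolutely convex hull of $Q(\{g(x_\lambda)\})$ (by Goldstine's theorem, since the weak$^*$-closure of an absolutely convex set containing a norming-type generating family recovers the whole bidual ball). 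Now I invoke the extreme point argument in the style of Milman's theorem: if $e$ is an extreme point of $B_{X^{**}} = \overline{\aco}^{w^*}(Q(G(A)))$, then $e$ must lie in the weak$^*$-closure of the set of scalar multiples $\{t\, Q(g(x_\lambda)) : |t|=1\}$; equivalently, there is a unimodular scalar $t$ and a subset $A_e \subseteq A$ with $te \in \overline{Q(G(A_e))}^{w^*}$. In fact one can simply take $A_e = A$ for every $e$, so the choice of $A_e$ is trivial and only the existence of the unimodular $t$ needs the Milman-type observation.

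The main obstacle — really the only substantive step — is justifying that an extreme point of the weak$^*$-closed absolutely convex hull of a set $S = Q(G(A))$ lies (up to a unimodular scalar) in $\overline{S}^{w^*}$. In the real case this is the classical Milman theorem applied to $\overline{S \cup (-S)}^{w^*}$; in the complex case one works with $\overline{\T \cdot S}^{w^*}$, noting this set is weak$^*$-compact (as $\T \times \overline{S}^{w^*}$ is compact and scalar multiplication is weak$^*$-continuous on bounded sets), and then applies Milman's theorem to conclude $\ext{B_{X^{**}}} \subseteq \overline{\T \cdot S}^{w^*}$. I would state this as the key lemma, cite Milman's theorem (e.g. \cite[Theorem 3.66]{FHHMZ}), and then the proof of Proposition \ref{alpha-quasialpha} is just the bookkeeping of transferring (i)--(iii) directly and deducing (iv) from condition (iv) of property $\alpha$-$G$ together with this lemma. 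I expect the whole argument to be short; the care needed is purely in the complex-scalar version of the Milman step and in correctly matching the quantifier structure of Definition \ref{propertyalpha}(b)(iv).
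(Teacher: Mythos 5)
Your proposal is correct and follows essentially the same route as the paper: take $A_e = A$ and $\rho(\cdot) \equiv \rho$, note (i)--(iii) transfer verbatim, and deduce (iv) by passing to $B_{X^{**}} = \overline{\co}^{w^*}(\mathbb{T}\,Q(G(A)))$ and applying Milman's partial converse to the Krein--Milman theorem to place every extreme point of $B_{X^{**}}$ in $\overline{\mathbb{T}\,Q(G(A))}^{w^*}$. If anything, your write-up is slightly more careful than the paper's on the two small points it glosses over (the Goldstine step and the weak$^*$-compactness of $\mathbb{T}\cdot\overline{S}^{w^*}$ needed to extract a single unimodular $t$), but the argument is the same.
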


\begin{proof} We follow the arguments from \cite[Theorem 2.1.(i)]{CS}. Suppose that $X$ satisfies property $\alpha$-$G$ with the family $\{(x_{\lambda}, x_{\lambda}^*): \lambda \in \Lambda\} \subseteq S_X \times S_{X^*}$ and the real number $0 \leq \rho < 1$. Then, $X$ satisfies property quasi-$\alpha$-$G$ with $A_e = A$ for every $e \in \ext{B_{X^{**}}}$ and $\rho(x_{\lambda}) := \rho$ for every $\lambda \in \Lambda$. Indeed, we consider the set $Q(G(A))$, where $Q: X \hookrightarrow X^{**}$ is the canonical embedding of $X$ in $X^{**}$. Since $\overline{\aco{}}(G(A)) = B_{X^{**}}$, we have that $\overline{\co{}}^{w^*} (\mathbb{T}Q(G(A))) = B_{X^{**}}$, where $\mathbb{T} = \{z \in \C: |z| = 1\}$. Since $B_{X^{**}}$ is convex and $w^*$-compact, by Krein–Milman theorem, we have that $\ext{B_{X^{**}}} \subseteq \mathbb{T} Q(G(A))$ and this allows us to take $A_e := A$ for every $e \in \ext{B_{X^{**}}}$ and $\rho(x_{\lambda}) := \rho$ for every $\lambda \in \Lambda$.
\end{proof}

Analogously to what we have done before, the notions of property $\beta$ and property quasi-$\beta$-$G$ for a group $G \subseteq \mathcal{L} (X)$ are given as follows.

\begin{definition} \label{propertybetaG} Let $Y$ be a Banach space and $H \subseteq \mathcal{L} (Y)$ be a group of isometries. 

\begin{itemize}	
\item[(a)] $Y$ is said to have {\it property $\beta$-$H$} if there exist $\{y_{\lambda}: \lambda \in \Lambda\} \subseteq S_Y$, $\{y_{\lambda}^*: \lambda \in \Lambda\} \subseteq S_{Y^*}$, and a number $0 \leq \rho < 1$ such that
	\begin{itemize}	 
		\item[(i)] $y_{\lambda}$ is an $H$-invariant point for every $\lambda \in \Lambda$, 
		\item[(ii)] $y_{\lambda}^*(y_{\lambda}) = 1$ for every $\lambda \in \Lambda$, 
		\item[(iii)] $|y_{\lambda}^*(y_{\mu})| \leq \rho$ for every $\lambda \not= \mu$, $\lambda, \mu \in \Lambda$, and
		\item[(iv)] for every $y \in Y$, we have $\|y\| = \sup \{|y_{\lambda}^*(g(y))|: \lambda \in \Lambda, g \in H\}$. 
	\end{itemize}

\vspace{0.2cm} 

		\item[(b)] $Y$ is said to have {\it property quasi-$\beta$-$H$} if there exist $A = \{y_{\lambda}: \lambda \in \Lambda\} \subseteq S_Y$, $A^* = \{y_{\lambda}^*: \lambda \in \Lambda\} \subseteq S_{Y^*}$, and a function $\rho: A^* \rightarrow \R$ such that 
	\begin{itemize}
		\setlength\itemsep{0.3em}
		\item[(i)] $y_{\lambda}$ is an $H$-invariant point for every $\lambda \in \Lambda$,
		\item[(ii)] $y_{\lambda}^*(y_{\lambda}) = 1$ for every $\lambda \in \Lambda$, 
		\item[(iii)] $|y_{\mu}^*(y_{\lambda})| \leq \rho (y_{\lambda}^*) < 1$ whenever $\lambda \not= \mu$ for every $\lambda, \mu \in \Lambda$, and
		\item[(iv)] for every $e^* \in \ext{B_{Y^*}}$, there exists a subset $A_{e^*} \subseteq A^*$ and $t \in \mathbb{T}$ such that $te^* \in \overline{H^*(A_{e^*})}^{w^*}=\overline{\{S^*(A_{e^*}):S\in H\}}^{w^*}$ and $r_{e^*} := \sup \{ \rho(y^*): y^* \in A_{e^*}\} < 1$. 	
	\end{itemize} 
	\end{itemize} 
	If $E$ is a subspace of $X^*$, we say that $X$ has {\it property $\beta$-$H$ induced by $E$} if we can find $\{y_{\lambda}^*: \lambda \in \Lambda\}$ in $E$ satisfying the previous conditions in (a). Similarly, if $F$ is a subspace of $Y^*$ and if we can find $A^*$ so that $A^* \subseteq F$, where $A^*$ is the set in the above definition in (b), then we say that $Y$ has {\it property quasi-$\beta$-$H$ induced by $F$}.

\end{definition}

As in the case of properties $\alpha$ and quasi-$\alpha$, let us notice that if, in Definition \ref{propertybetaG}, we assume that $H$ is the trivial group consisting of only the identity mapping, then property $\beta$-$H$ coincides with property $\beta$ of Lindenstrauss as in \cite{L} and property quasi-$\beta$-$H$ coincides with property quasi-$\beta$ as in \cite{AAP}. 
Analogously to Proposition \ref{alpha-quasialpha}, we have the following result (see the comments just after \cite[Definition 1]{AAP}).

\begin{proposition} Let $Y$ be a Banach space and $H \subseteq \mathcal{L}(Y)$ be a group of isometries. If $Y$ has property $\beta$-$H$, then it also has property quasi-$\beta$-$H$.	
\end{proposition}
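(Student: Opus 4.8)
The plan is to transcribe the proof of Proposition \ref{alpha-quasialpha} into the adjoint setting, taking the constant weight function and $A_{e^*}:=A^*$ for every extreme point. Precisely, if $Y$ has property $\beta$-$H$ with witnesses $\{y_\lambda\}_{\lambda\in\Lambda}\subseteq S_Y$, $\{y_\lambda^*\}_{\lambda\in\Lambda}\subseteq S_{Y^*}$ and $0\le\rho<1$, I would put $A:=\{y_\lambda:\lambda\in\Lambda\}$, $A^*:=\{y_\lambda^*:\lambda\in\Lambda\}$ and let $\rho\colon A^*\to\R$ be the constant function with value $\rho$. Conditions (i), (ii) and (iii) of property quasi-$\beta$-$H$ then hold verbatim by the corresponding conditions for property $\beta$-$H$ (for (iii) one just uses that the $\beta$-$H$ estimate is symmetric in the indices), so all the work is in verifying condition (iv).

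For (iv), the first step is to note that since the norm of $Y$ is $H$-invariant and each $g\in H$ is invertible, every $g\in H$ is a surjective linear isometry of $Y$; hence each $g^*$ is a surjective isometry of $Y^*$ and $H^*(A^*)\subseteq S_{Y^*}$. Writing $y_\lambda^*(g(y))=(g^*y_\lambda^*)(y)$, condition (iv) of property $\beta$-$H$ says exactly that $\|y\|=\sup\{|f(y)|:f\in H^*(A^*)\}$ for all $y\in Y$, i.e. $H^*(A^*)$ is norming for $Y$. By the bipolar theorem this is equivalent to $\overline{\co}^{w^*}(\mathbb{T}H^*(A^*))=B_{Y^*}$, and since $B_{Y^*}$ is convex and $w^*$-compact, Milman's partial converse to the Krein--Milman theorem gives $\ext{B_{Y^*}}\subseteq\overline{\mathbb{T}H^*(A^*)}^{w^*}$.

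Finally, given $e^*\in\ext{B_{Y^*}}$, I would choose a net in $\mathbb{T}H^*(A^*)$ that $w^*$-converges to $e^*$, pass to a subnet along which the unimodular scalars converge to some $t_0\in\mathbb{T}$ (using compactness of $\mathbb{T}$; in the real case this is automatic), and use boundedness of $H^*(A^*)$ together with the convergence of the scalars to deduce $t_0^{-1}e^*\in\overline{H^*(A^*)}^{w^*}$. Taking $A_{e^*}:=A^*$ and $t:=t_0^{-1}$ then yields $t e^*\in\overline{H^*(A_{e^*})}^{w^*}$ with $r_{e^*}=\rho<1$, which is condition (iv). I do not anticipate a real obstacle: the argument is a routine dualization of Proposition \ref{alpha-quasialpha}, the only mild points being the extraction of the unimodular scalar and the observation that the inclusion one really gets from Krein--Milman (both here and in Proposition \ref{alpha-quasialpha}) involves the $w^*$-closure of $\mathbb{T}H^*(A^*)$, which is exactly what condition (iv) of property quasi-$\beta$-$H$ permits.
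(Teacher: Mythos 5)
Your argument is correct and is exactly the dualization of Proposition~\ref{alpha-quasialpha} that the paper intends here (the paper states this proposition without proof, merely pointing to the analogous $\alpha$ case and the remarks after Definition~1 of \cite{AAP}). If anything, your write-up is slightly more careful than the printed sketch: you invoke Milman's converse to place $\ext{B_{Y^*}}$ in the $w^*$-\emph{closure} of $\mathbb{T}H^*(A^*)$ and then extract the unimodular scalar by a subnet argument, whereas the proof of Proposition~\ref{alpha-quasialpha} writes this inclusion without the closure.
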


Next, we present the following duality results.

\begin{proposition} \label{dual-induced} Let $X$ be a Banach space and $G \subseteq \mathcal{L}(X)$ be a group of isometries. 
\begin{itemize}
		\setlength\itemsep{0.3em}
\item[(a)]
$X$ has property $\alpha$-$G$ if and only if $X^*$ has property $\beta$-$G^*$ induced by $X$.
\item[(b)] 
$X$ has property quasi-$\alpha$-$G$ if and only if $X^*$ has property quasi-$\beta$-$G^*$ induced by $X$
\end{itemize}
\end{proposition}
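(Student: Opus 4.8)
The plan is to prove a duality between property $\alpha$-$G$ on $X$ and property $\beta$-$G^*$ induced by $X$ on $X^*$, and similarly for the quasi-versions, essentially by transporting the defining data back and forth under the canonical embedding $Q\colon X\hookrightarrow X^{**}$ and the correspondence $g\mapsto g^*$ between $G\subseteq\mathcal L(X)$ and $G^*\subseteq\mathcal L(X^*)$.

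For (a), assume first that $X$ has property $\alpha$-$G$ with data $\{(x_\lambda,x_\lambda^*):\lambda\in\Lambda\}$ and constant $\rho$. I would take $y_\lambda^*:=x_\lambda^*\in S_{X^*}$ and $y_\lambda:=Q(x_\lambda)\in S_{X^{**}}$, so that $\{y_\lambda\}\subseteq S_{X^*}{}^*$ lies in the predual copy $X$; conditions (ii) and (iii) of property $\beta$-$G^*$ are immediate from the equalities $x_\lambda^*(x_\lambda)=1$ and $|x_\lambda^*(x_\mu)|\leq\rho$. For (i), I must check each $y_\lambda=Q(x_\lambda)$ is $G^*$-invariant, i.e.\ $(g^*)^{*}Q(x_\lambda)=Q(x_\lambda)$ for all $g\in G$; unwinding, $((g^*)^*Q(x_\lambda))(x^*)=Q(x_\lambda)(g^*x^*)=x^*(g(x_\lambda))$, and since $x_\lambda=g^{-1}(g(x_\lambda))$ need not hold, the right statement is rather that $G$-invariance of $x_\lambda^*$ is what's needed — so in fact one should be careful: property $\beta$-$G^*$ requires the $y_\lambda$'s invariant and the $y_\lambda^*$'s arbitrary, whereas property $\alpha$-$G$ has the $x_\lambda^*$'s invariant and the $x_\lambda$'s arbitrary. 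The resolution is that $G$-invariance of $x_\lambda^*$ is precisely $G^*$-invariance of $Q(x_\lambda)$ when one views $Q(x_\lambda)\in X^{**}=(X^*)^*$: indeed $Q(x_\lambda)(g^*x^*)=(g^*x^*)(x_\lambda)=x^*(g(x_\lambda))$, which equals $x^*(x_\lambda)=Q(x_\lambda)(x^*)$ for all $x^*$ iff $g(x_\lambda)=x_\lambda$... hmm, that is not what $G$-invariance of $x_\lambda^*$ gives. The correct pairing is: $x_\lambda^*$ is $G$-invariant $\iff$ for all $x$, $x_\lambda^*(g(x))=x_\lambda^*(x)$ $\iff$ $g^*x_\lambda^*=x_\lambda^*$ $\iff$ $x_\lambda^*$ is a $G^*$-fixed point of $X^*$. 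So one should instead set $y_\lambda:=x_\lambda^*$ (the $G^*$-invariant points of $Y=X^*$) and $y_\lambda^*:=Q(x_\lambda)\in X\subseteq X^{**}=Y^*$. Then (i) holds, (ii) and (iii) are as above, and (iv) — that $\|y^*\|=\sup\{|y_\lambda^*(h(y^*))|:\lambda,h\in H=G^*\}$ for every $y^*\in X^*$ — translates, via $y_\lambda^*=Q(x_\lambda)$ and $h=g^*$, into $\|x^*\|=\sup\{|x^*(g(x_\lambda))|:\lambda\in\Lambda,g\in G\}$, which is exactly the statement that $B_X=\caconv\{g(x_\lambda):\lambda\in\Lambda,g\in G\}$ from condition (iv) of property $\alpha$-$G$, by the bipolar theorem. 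Conversely, if $X^*$ has property $\beta$-$G^*$ induced by $X$, one reads off a family $\{y_\lambda^*\}\subseteq X\subseteq X^{**}$ with each $y_\lambda\in S_{X^*}$ a $G^*$-invariant point; set $x_\lambda:=y_\lambda^*\in B_X$ (with $\|x_\lambda\|=1$ since $y_\lambda(y_\lambda^*)=1$) and $x_\lambda^*:=y_\lambda$, note $G^*$-invariance of $y_\lambda$ in $X^*$ is $G$-invariance of $x_\lambda^*$, conditions (ii),(iii) transfer directly, and condition (iv)-$\beta$ together with the bipolar theorem gives $B_X=\caconv\{g(x_\lambda):\lambda,g\}$, i.e.\ (iv)-$\alpha$.

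For (b) the scheme is identical but one must also track the sets $A_e$ (resp.\ $A_{e^*}$) and the functions $\rho$. Given property quasi-$\alpha$-$G$ on $X$ with data $A=\{x_\lambda\}$, $A^*=\{x_\lambda^*\}$, $\rho\colon A\to\R$, I set $y_\lambda:=x_\lambda^*$ and $y_\lambda^*:=Q(x_\lambda)$ on $Y=X^*$, define $\rho$ on $A^*_{\text{new}}=\{Q(x_\lambda)\}$ by pushing forward $\rho$, and for each $e^*\in\ext B_{Y^*}=\ext B_{X^{**}}$ use the set $A_{e^*}$ corresponding to the $A_e$ provided by property quasi-$\alpha$-$G$ for that same $e=e^*$: the condition $te\in\overline{Q(G(A_e))}^{w^*}$ is literally the condition $te^*\in\overline{H^*(A_{e^*})}^{w^*}$ once one identifies $Q(g(x_\lambda))$ with $(g^*)^*(Q(x_\lambda))=h^*(y_\lambda^*)$, and $r_e=r_{e^*}$, so all four clauses of property quasi-$\beta$-$G^*$ induced by $X$ hold. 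The converse is the same reading in reverse.

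The main obstacle is bookkeeping the three-way correspondence correctly: (points of $X$) $\leftrightarrow$ (functionals on $X^*$ lying in the predual $X$), ($G$-invariant functionals on $X$) $\leftrightarrow$ ($G^*$-fixed points of $X^*$), and ($g\in G$) $\leftrightarrow$ ($g^*\in G^*$ acting on $X^*$, with bidual $(g^*)^*=g^{**}$ acting on $X^{**}$ restricting to $g$ on $Q(X)$), so that the ``invariant side'' genuinely swaps between the two properties — property $\alpha$ has invariant functionals and property $\beta$ has invariant points, and duality exchanges them. Once this dictionary is fixed, conditions (ii)--(iii) are trivial transfers, condition (i) is Proposition \ref{lemma-caract-dual-bidual}-type unwinding (or direct), and the only substantive analytic input is the bipolar/Hahn-Banach identity $B_X=\caconv\{g(x_\lambda):\lambda,g\}\iff \|x^*\|=\sup_{\lambda,g}|x^*(g(x_\lambda))|$ for all $x^*\in X^*$, which converts clause (iv) of property $\alpha$ into clause (iv) of property $\beta$ and back; in the quasi-case the analogue is the weak-star-density clause, which requires no extra work beyond matching $\ext B_{X^{**}}$ with $\ext B_{Y^*}$.
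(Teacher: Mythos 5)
Your proof is correct and follows essentially the same route as the paper: identify the $G$-invariant functionals $x_\lambda^*$ with $G^*$-invariant points of $Y=X^*$, take $y_\lambda^*=Q(x_\lambda)$ as the functionals induced by $X$, and convert clause (iv) via the bipolar/norming duality (the paper outsources part (a) to Schachermayer's Proposition 1.4 and sketches part (b) with exactly these identifications, including $\rho_Y(Q(x_\lambda))=\rho(x_\lambda)$ and the matching of $\ext{B_{Y^*}}$ with $\ext{B_{X^{**}}}$). The only blemish is the false start in your part (a), where you initially assign the roles of points and functionals the wrong way round, but you correctly repair this mid-argument and the final dictionary is the right one.
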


\begin{proof} 
(a) can be obtained similarly to \cite[Proposition 1.4]{S}. For (b), suppose that $X$ has property quasi-$\alpha$-$G$ and $Y = X^*$. Consider the sets $A =\{x_{\lambda} \in S_X: \lambda \in \Lambda\}$ and $A^* = \{x_{\alpha}^* \in S_{X^*}: \lambda \in \Lambda\}$, and the function $\rho: A \rightarrow \R$ from (b) in Definition \ref{propertyalpha}. Let us notice that, since $x_{\lambda}^*$ is $G$-invariant we have that
\begin{equation*}
	g^*(x_{\lambda}^*)(x) = x_{\lambda}^*(g(x)) = x_{\lambda}^*(x) 
\end{equation*} 
for every $x \in X$ and every $g\in G$. Therefore, $x_{\lambda}^*$ is a $G^*$-invariant point. Moreover, let $e^* \in \ext{B_{Y^*}} = \ext{B_{X^{**}}}$. Then, there exists $A_{e^*} \subseteq A$ and $t \in \mathbb{T}$ such that 
\begin{equation*}
	te^* \in \overline{Q(G(A_{e^*}))}^{w^*} \ \ \ \ \mbox{and} \ \ \ \ r_{e^*} = \sup \{ \rho(x): x \in A_{e^*}\} < 1.
\end{equation*}
Finally, we may define $\rho_Y: Q(A) \rightarrow \R$ by $\rho_{Y} (Q(x_{\lambda})) = \rho(x_{\lambda})$ for every $\lambda \in \Lambda$. This is enough to prove (b). 
\end{proof}

\begin{example} As in Example \ref{example-ell1}, let us consider a group of permutations $G$ in $\N$ and keep the notation $G$ for the induced group in $\mathcal{L} (\ell_1)$. 
	Note that ${G}^* =\{ g^* : g \in G\}$ is a group in $\mathcal{L}(\ell_{\infty})$ and that 
	\begin{equation*}
		g^* (z) = (z_{g^{-1} (n)})_{n=1}^{\infty}
	\end{equation*}	
	for every $z \in \ell_\infty$ and $g \in G$. Example \ref{example-ell1} shows that $\ell_1$ satisfies property $\alpha$-${G}$ and, therefore, by Proposition \ref{dual-induced}, $\ell_{\infty}$ satisfies property $\beta$-${G}^*$ (induced by $\ell_1$) with $\rho = 0$. 
\end{example}

\subsection{Denseness of the $G$-invariant norm-attaining operators} \label{sectionadj}

In this section, we give some invariant group versions of the main results from \cite{AAP, CS, L, Z} making use of all the tools we have obtained so far. We start with the following result due to Lindenstrauss and, for its proof, we invoke Propositions \ref{lemma-convergence} and \ref{lemma-caract-dual-bidual}.

\begin{theorem}[Lindenstrauss] \label{lindenstrauss} Let $X, Y$ be Banach spaces and $G \subseteq \mathcal{L}(X)$ be a group of isometries. Then, the set
	\begin{equation*}
\{ T \in \mathcal{L}_G(X, Y): T^{**} \in \NA_{G^{**}}(X^{**}, Y^{**}) \}
	\end{equation*}
	is dense in $\mathcal{L}_G(X, Y)$. 
\end{theorem}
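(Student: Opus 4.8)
The plan is to mimic Lindenstrauss' classical argument for the density of the set of operators whose bidual attains its norm, while checking at every step that the perturbations produced stay inside the $G$-invariant category. The starting point is an arbitrary $T \in \mathcal{L}_G(X,Y)$ and an $\eps > 0$; we want to produce $S \in \mathcal{A}$ with $\|S - T\| < \eps$. First I would recall the shape of the classical construction: one builds a Cauchy sequence $(T_n)$ in $\mathcal{L}(X,Y)$ by successive perturbations of the form $T_{n+1} = T_n + \text{(small rank-one-like correction)}$, arranged so that $\|T_n\| \to \|T_\infty\|$ is attained on $B_{X^{**}}$ in the limit by a standard "almost attaining at stage $n$ forces attaining in the limit" bookkeeping. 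The key modification is that each correction must be replaced by its $G$-symmetrization. Concretely, if at some stage one would add a term built from a functional $x^* \in X^*$ and a vector $y \in Y$, one instead works with $\overline{x^*}$, the $G$-symmetrization of $x^*$, which by Proposition~\ref{lemma-caract-dual-bidual} keeps the running operator $G$-invariant (the symmetrization of a $G$-invariant operator is $G$-invariant, and more to the point $T \mapsto \overline{T}$ is a norm-one projection onto $\mathcal{L}_G(X,Y)$, so replacing any candidate perturbation $R$ by $\overline{R}$ can only decrease its norm while preserving $G$-invariance of the sum).

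The key steps, in order, are: (1) fix $T \in \mathcal{L}_G(X,Y)$ and $\eps>0$, and choose a summable sequence of positive reals $(\eps_n)$ with $\sum \eps_n < \eps$; (2) inductively construct $T_n \in \mathcal{L}_G(X,Y)$ with $\|T_{n+1}-T_n\| < \eps_n$, together with points $x_n \in S_X$ such that $\|T_n(x_n)\|$ is within $\eps_n$ of $\|T_n\|$ and the sequence $(x_n)$ (or rather its image in $B_{X^{**}}$ under a $w^*$-convergent subnet) witnesses norm attainment of the limit — this is exactly Lindenstrauss' device, and the only new ingredient is that each perturbation is symmetrized so that $T_n$ stays in $\mathcal{L}_G(X,Y)$; (3) set $T_\infty = \lim_n T_n$ in operator norm; since norm convergence implies strong-operator convergence, Property~\ref{lemma-convergence} gives $T_\infty \in \mathcal{L}_G(X,Y)$; (4) pass to the bidual: $T_\infty^{**}$ is $G^{**}$-invariant by Proposition~\ref{lemma-caract-dual-bidual}(c), and the bookkeeping from step (2) forces $T_\infty^{**}$ to attain its norm at some $x^{**} \in B_{X^{**}}$, obtained as a $w^*$-cluster point of $Q(x_n)$; hence $T_\infty \in \mathcal{A}$ and $\|T_\infty - T\| \le \sum \eps_n < \eps$.

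The step I expect to be the main obstacle is step (2): verifying that symmetrizing each perturbation does not destroy the quantitative estimates that make the limit norm-attaining. In Lindenstrauss' original scheme the perturbations are carefully tuned so that $\|T_{n+1}(x_n)\|$ is pushed up close to $\|T_{n+1}\|$; after replacing the perturbation by its $G$-average one must check that evaluating at a \emph{$G$-symmetrized} test point $\overline{x_n}$ (which lies in $B_X$ by Proposition~\ref{lemma1}, and on which $T_n$ and $\overline{T_n} = T_n$ agree in the sense of the computation in the proof of Proposition~\ref{norming}) recovers the same lower bound. The identity $\overline{T}(\overline{x}) = \overline{T}(x)$ used in Proposition~\ref{norming} is precisely the tool that makes this work: it lets one transfer "almost attains at $x_n$" for the unsymmetrized data to "almost attains at $\overline{x_n}$" for the symmetrized data without loss, because the norm only drops under symmetrization and $B_{X_G}$ is norming for $\mathcal{L}_G(X,Y)$. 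Once this compatibility is in place the rest is a routine transcription of the classical proof, with $X$ replaced by $X_G$-adapted test vectors and $\mathcal{L}(X,Y)$ replaced by its one-complemented subspace $\mathcal{L}_G(X,Y)$ throughout.
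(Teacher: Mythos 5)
There is a genuine gap: your plan hinges on replacing each perturbation by its $G$-symmetrization, but the symmetrization $x^* \mapsto \overline{x^*}$ (and $R \mapsto \overline{R}$) is defined via the normalized Haar measure and therefore only exists when $G$ is \emph{compact}. Theorem~\ref{lindenstrauss} assumes only that $G$ is a group, so the averaging device — and with it Propositions~\ref{lemma1} and \ref{norming}, which you also invoke — is simply not available. Even granting compactness, the step you flag as "the main obstacle" is a real one: the almost-norming points $x_n$ in Lindenstrauss' scheme need not be $G$-invariant, and $\overline{x^*}(x_n)$ can be far smaller than $x^*(x_n)$, so it is not clear the quantitative bookkeeping survives the averaging.

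The point you are missing is that \emph{no symmetrization is needed}. In Lindenstrauss' construction the perturbation at stage $k$ is
\begin{equation*}
T_{k+1}(x) = T_k(x) + \eps_k\, x_k^*(T_k(x))\, T_k(x_k),
\end{equation*}
where $x_k^* \in S_{Y^*}$ norms the vector $T_k(x_k) \in Y$. The functional $x \mapsto x_k^*(T_k(x))$ is exactly $T_k^* x_k^*$, which is automatically $G$-invariant whenever $T_k$ is, by Proposition~\ref{lemma-caract-dual-bidual}(b). Hence the classical construction, run verbatim with no modification of the perturbations or of the test points, stays inside $\mathcal{L}_G(X,Y)$ by induction; the limit $T_\infty$ is $G$-invariant by Property~\ref{lemma-convergence}, and $T_\infty^{**}$ is $G^{**}$-invariant by Proposition~\ref{lemma-caract-dual-bidual}(c). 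This is the paper's argument, and it works for arbitrary groups precisely because the $G$-invariance is inherited for free rather than imposed by averaging.
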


\begin{proof} Let $T \in \mathcal{L}_G(X, Y)$ with $\|T\| = 1$ and $\eps \in (0, 1/3)$ be given. Consider $(\eps_k)_{k=1}^{\infty} \subseteq \R^+$ to be such that
	\begin{equation*}
		2 \sum_{i=1}^{\infty} \eps_i < \eps, \ \ \ 2 \sum_{i=k+1}^{\infty} \eps_i < \eps_k^2 \ \ \ \mbox{and} \ \ \ \eps_k < \frac{1}{10k}, \ \forall k \in \N. 
	\end{equation*} 
	Let us construct inductively $(T_k)_{k=1}^{\infty}, (x_k)_{k=1}^{\infty}$, and $(x_k^*)_{k=1}^{\infty}$ to be such that
	\begin{itemize}
		\setlength\itemsep{0.3em}
		\item[(i)] $T_1:= T \in \mathcal{L}_G(X, Y)$,
		\item[(ii)] $\|T_k(x_k)\| > \|T_k\| - \eps_k^2$ with $\|x_k\| = 1$,
		\item[(iii)] $x_k^*(T_k(x_k)) = \|T_k(x_k)\|$ with $\|x_k^*\| = 1$, and
		\item[(iv)] $T_{k+1}(x) := T_k(x) + \eps_k x_k^*(T_k(x)) T_k(x_k)$ for every $k \in \N$ and $x \in X$.
	\end{itemize}

	It is clear that every $T_k$ is $G$-invariant since $T_1$ is $G$-invariant. By Lindenstrauss' argument (see the proof of \cite[Theorem 1]{L}), we have that $(T_k)_{k=1}^{\infty}$ converges (in the strong operator topology) to some $T_{\infty} \in \mathcal{L}(X, Y)$ such that $\|T_{\infty} - T\| < \eps$ and $T_{\infty}^{**} \in \NA(X^{**}, Y^{**})$. By Proposition \ref{lemma-convergence}, $T_{\infty} \in \mathcal{L}_G(X, Y)$ and by Proposition \ref{lemma-caract-dual-bidual}, $T_{\infty}^{**}$ is $G^{**}$-invariant.
\end{proof}

Next result extents Theorem \ref{lindenstrauss}. As before, we use Proposition \ref{lemma-convergence} and Lemma \ref{lemma-caract-dual-bidual}.

\begin{theorem}[Zizler] \label{zizler} Let $X, Y$ be Banach spaces and let $G \subseteq \mathcal{L}(X)$ be a group of isometries. Then, the set
	\begin{equation*}
 \{ T \in \mathcal{L}_G(X, Y): T^* \in \NA(Y^*, X^*) \}
	\end{equation*}
	is dense in $\mathcal{L}_G(X, Y)$.
\end{theorem}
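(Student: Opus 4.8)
## Proof proposal for Theorem \ref{zizler} (Zizler)

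The plan is to mimic the proof of the Lindenstrauss theorem just above (Theorem \ref{lindenstrauss}), but to run Zizler's own perturbation argument (from \cite{Z}) rather than Lindenstrauss'. The key observation that makes this work in the $G$-invariant setting is a structural one: all the perturbations in Zizler's iteration are built out of objects that preserve $G$-invariance. Concretely, given a starting operator $T \in \mathcal{L}_G(X,Y)$ with $\|T\|=1$, I would fix a rapidly decreasing sequence $(\eps_k)$ of positive numbers (the same kind of summability conditions as in the proof of Theorem \ref{lindenstrauss}, namely $2\sum_i \eps_i < \eps$, $2\sum_{i>k}\eps_i < \eps_k^2$, and $\eps_k < 1/(10k)$), and inductively construct a sequence $(T_k) \subseteq \mathcal{L}(X,Y)$ together with functionals $y_k^* \in S_{Y^*}$ and points $y_k^{**} \in S_{Y^{**}}$ (or equivalently elements realizing $\|T_k^*\|$ up to $\eps_k^2$), performing Zizler's perturbation step $T_{k+1} := T_k + \eps_k\, (\text{rank-one correction built from } y_k^*, \text{ and } T_k)$. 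Since $T_1 = T$ is $G$-invariant and each correction term is of the form $x \mapsto \phi(T_k(x))\cdot v$ for a scalar-valued $\phi$ depending only on $T_k(x)$ and a fixed vector $v \in Y$, Property \ref{lemma-caract-dual-bidual} (the characterization (a)$\Leftrightarrow$(b): $T$ is $G$-invariant iff $T^*y^*$ is $G$-invariant for all $y^*$) guarantees that $G$-invariance is inherited at every stage of the induction. Hence every $T_k \in \mathcal{L}_G(X,Y)$.

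Next, I would invoke Zizler's original analysis verbatim to conclude that the sequence $(T_k)$ converges in the strong operator topology to some $T_\infty \in \mathcal{L}(X,Y)$ with $\|T_\infty - T\| < \eps$ and, crucially, $T_\infty^* \in \NA(Y^*, X^*)$. This is exactly the content of \cite{Z} (it is the same mechanism Lindenstrauss uses to produce $T_\infty^{**} \in \NA(X^{**},Y^{**})$, but arranged so that the adjoint itself attains its norm). At this point the $G$-invariant upgrade is automatic: by Property \ref{lemma-convergence}, a strong-operator-topology limit of $G$-invariant operators is $G$-invariant, so $T_\infty \in \mathcal{L}_G(X,Y)$. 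Therefore $T_\infty \in \mathcal{A}$ and $\|T_\infty - T\| < \eps$, which proves density of $\mathcal{A}$ in $\mathcal{L}_G(X,Y)$ (the normalization $\|T\|=1$ and $\eps < 1/3$ being harmless, as in Theorem \ref{lindenstrauss}).

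Finally, one should record why this theorem extends Theorem \ref{lindenstrauss}: if $S \in \mathcal{L}(Y^*,X^*)$ attains its norm, then its adjoint $S^* \in \mathcal{L}(X^{**},Y^{**})$ attains its norm as well, so membership of $T_\infty^*$ in $\NA(Y^*,X^*)$ forces $T_\infty^{**} \in \NA(X^{**},Y^{**})$; the remark preceding the statement already points this out. I expect the only genuine obstacle to be bookkeeping rather than conceptual: one must make sure that Zizler's perturbation step is written in a form that manifestly factors through $T_k$ (so that Property \ref{lemma-caract-dual-bidual} applies), and that the quantitative estimates guaranteeing SOT-convergence and $\|T_\infty - T\| < \eps$ are unaffected by restricting attention to the closed subspace $\mathcal{L}_G(X,Y)$ — which they are, since all the choices (norming functionals, approximate maximizers) may be made inside the ambient spaces $X, Y, Y^*$ without any invariance requirement on the individual vectors. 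No compactness of $G$ is needed here, only that $G$ is a group, exactly as in Theorem \ref{lindenstrauss}.
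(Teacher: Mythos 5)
Your proposal is correct and follows essentially the same route as the paper: run Zizler's inductive perturbation scheme starting from a $G$-invariant $T$, observe that each correction factors through $T_k$ (so $G$-invariance is inherited at every step, via Proposition \ref{lemma-caract-dual-bidual} or directly), and pass to the SOT limit using Property \ref{lemma-convergence}. The paper's own proof is just a terser version of exactly this argument, likewise noting that no compactness of $G$ is required.
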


\begin{proof} Given $T \in \mathcal{L}_G(X, Y)$ with $\|T\| = 1$ and $\eps > 0$, as in Lindenstrauss argument, Zizler's proof in \cite{Z} relies on inductive construction of a sequence $(T_k)_{k=1}^{\infty} \subseteq \mathcal{L}_G(X, Y)$ which converges (in the strong operator topology) to some $T_{\infty} \in \mathcal{L}(X, Y)$ so that $T_{\infty}^* \in \NA(Y^*, X^*)$. By the same argument as in Theorem \ref{lindenstrauss}, each $T_k$ is $G$-invariant and, therefore, so is $T_{\infty}$ by Proposition \ref{lemma-convergence}. 
\end{proof}

In Theorem \ref{AcostaAguirrePaya} below we assume that the group of isometries $G \subseteq \mathcal{L}(X)$ is {\it compact} as its proof makes use of a version of the Bishop-Phelps theorem for group invariant mappings (see \cite[Theorem 3]{F}). 

\begin{theorem}[Acosta, Aguirre, Pay\'a] \label{AcostaAguirrePaya} Let $X$ be a Banach space and $G \subseteq \mathcal{L}(X)$ be a compact group of isometries. If $Y$ has property quasi-$\beta$-$H$ for some $H \subseteq \mathcal{L}(Y)$, then 
	\begin{equation*}
		\{ T \in \NA_G(X, Y): T^* \in \mathcal{L}_{H^*} (Y^*, X^*) \} 
	\end{equation*}
	is dense in $\{T \in \mathcal{L}_G(X, Y): T^* \in \mathcal{L}_{H^*}(Y^*, X^*)\}$.
\end{theorem}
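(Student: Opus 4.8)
The plan is to adapt the classical Acosta–Aguirre–Payá argument (the proof of the main theorem in \cite{AAP}) to the $G$-invariant setting, using the compactness of $G$ to keep everything inside the relevant invariant subspaces. Fix $T\in\mathcal{L}_G(X,Y)$ with $T^*\in\mathcal{L}_{H^*}(Y^*,X^*)$, normalize $\|T\|=1$, and fix $\eps>0$. Let $\{y_\lambda\}_{\lambda\in\Lambda}$, $\{y_\lambda^*\}_{\lambda\in\Lambda}$, and $\rho\colon A^*\to\R$ witness property quasi-$\beta$-$H$ for $Y$. Since for every $e^*\in\ext{B_{Y^*}}$ we have $te^*\in\overline{H^*(A_{e^*})}^{w^*}$ for a suitable $A_{e^*}\subseteq A^*$ and $r_{e^*}<1$, and since $B_{Y^*}=\overline{\co}^{w^*}\ext{B_{Y^*}}$, one sees as in \cite{AAP} that $\|y\|=\sup_\lambda |y_\lambda^*(y)|$ is still essentially available for controlling norms along the construction; the key consequence we exploit is that if $\|Ty\|$ is close to $\|T\|$ then some $|y_\lambda^*(Ty)|$ is close to $1$ with $\rho(y_\lambda^*)$ bounded away from $1$.

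First I would run the standard iterative scheme: construct sequences $(T_k)\subseteq\mathcal{L}_G(X,Y)$, $(\lambda_k)\subseteq\Lambda$, and $(x_k)\subseteq S_X$, together with an auxiliary sequence of small weights $(\eps_k)$ with $\sum_k\eps_k<\eps$, so that at each stage $|y_{\lambda_k}^*(T_k x_k)|$ is nearly maximal, and set
\begin{equation*}
T_{k+1}(x) := T_k(x) + \eps_k\, y_{\lambda_k}^*(T_k x)\, y_{\lambda_k}.
\end{equation*}
The crucial point for us is that this perturbation preserves $G$-invariance of the domain: since $T_k$ is $G$-invariant, $x\mapsto y_{\lambda_k}^*(T_kx)$ is a $G$-invariant scalar function by Proposition \ref{lemma-caract-dual-bidual}, so $T_{k+1}\in\mathcal{L}_G(X,Y)$; it also preserves the condition $T_k^*\in\mathcal{L}_{H^*}(Y^*,X^*)$, because $T_{k+1}^*(y^*)=T_k^*(y^*)+\eps_k\, y^*(y_{\lambda_k})\, T_k^*(y_{\lambda_k}^*)$ and both $y^*\mapsto y^*(y_{\lambda_k})$ (as $y_{\lambda_k}$ is an $H$-invariant point) and $T_k^*(y_{\lambda_k}^*)$ are compatible with the $H^*$-action. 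The estimates in \cite{AAP} then show $(T_k)$ is Cauchy in operator norm, converging to some $T_\infty$ with $\|T_\infty-T\|<\eps$, and that $T_\infty$ attains its norm (this is where the quasi-$\beta$ structure and the numbers $r_{e^*}<1$ are used: they force the ``mass'' to concentrate on finitely many indices $\lambda_k$ and a norming vector to persist in the limit).

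It remains to collect the invariance statements for $T_\infty$. Since each $T_k\in\mathcal{L}_G(X,Y)$ and $T_k\to T_\infty$ in the strong operator topology, Property \ref{lemma-convergence} gives $T_\infty\in\mathcal{L}_G(X,Y)$. For the adjoint, each $T_k^*\in\mathcal{L}_{H^*}(Y^*,X^*)$ by the inductive remark above; one checks $T_k^*\to T_\infty^*$ in the strong operator topology of $\mathcal{L}(Y^*,X^*)$ from norm convergence of $T_k\to T_\infty$, and applies Property \ref{lemma-convergence} again (with the group $H^*$ in place of $G$) to conclude $T_\infty^*\in\mathcal{L}_{H^*}(Y^*,X^*)$. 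Hence $T_\infty$ lies in the announced dense subset, proving the theorem. The main obstacle I anticipate is the second point: verifying carefully that the perturbed adjoint $T_{k+1}^*$ remains $H^*$-invariant and that the limiting operator's norm-attainment survives — i.e., importing the delicate part of the Acosta–Aguirre–Payá convergence argument without breaking compatibility with the group actions; the rest is bookkeeping with the compactness of $G$ (and $H^*$) entering only through Property \ref{lemma-convergence} and Proposition \ref{lemma-caract-dual-bidual}.
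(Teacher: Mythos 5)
There is a genuine gap here: your iterative scheme is essentially the Lindenstrauss iteration from Theorem \ref{lindenstrauss}, and that machinery only yields a limit $T_\infty$ whose \emph{second adjoint} attains its norm (or, in Zizler's variant, whose adjoint does) --- it does not by itself produce an operator in $\NA_G(X,Y)$. The quasi-$\beta$ structure is not used in \cite{AAP} to make an infinite iteration converge to a norm-attaining operator; it is used to justify a \emph{single} rank-one perturbation after two preliminary reductions that your proposal omits. Concretely, the paper first invokes Theorem \ref{zizler} to assume $T^*\in\NA(Y^*,X^*)$, then uses Lima's result \cite[Theorem 5.8]{Lima} to locate an extreme point $e^*\in\ext{B_{Y^*}}$ at which $T^*$ attains its norm; only then does condition (iv) of quasi-$\beta$-$H$ supply the set $A_{e^*}$ with $r_{e^*}<1$ and the index $\lambda_0$ along which to perturb. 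Your sketch gestures at $r_{e^*}<1$ ``forcing mass to concentrate'' but gives no actual mechanism for norm attainment of the limit, and no choice rule for the $\lambda_k$.

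The second, and decisive, missing ingredient is the group-invariant Bishop--Phelps theorem for functionals, \cite[Theorem 3]{F}. The perturbation that actually works is
\begin{equation*}
S(x) := T(x) + \Bigl[ \bigl(1+\tfrac{\eps}{2}\bigr) x^*(x) - (T^* y_{\lambda_0}^*)(x) \Bigr] y_{\lambda_0},
\end{equation*}
where $x^*$ is a $G$-invariant \emph{norm-attaining} functional close to the $G$-invariant functional $T^* y_{\lambda_0}^*$; it is the norm attainment of $x^*$ (at a point of $B_X$) that transfers to $S$, and it is precisely here that the compactness of $G$ is indispensable. Your perturbation uses $T_k^* y_{\lambda_k}^*$ directly, which need not attain its norm, so nothing forces the limit to do so either. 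Indeed, your closing remark that compactness of $G$ enters only through Property \ref{lemma-convergence} and Proposition \ref{lemma-caract-dual-bidual} is a red flag: neither of those requires compactness, so your argument, if it worked, would prove the theorem for arbitrary groups --- contrary to the paper's explicit reliance on \cite[Theorem 3]{F}. Your observations that the rank-one perturbation preserves $G$-invariance (via Proposition \ref{lemma-caract-dual-bidual}) and that $S^*\in\mathcal{L}_{H^*}(Y^*,X^*)$ because $y_{\lambda_0}$ is $H$-invariant are correct and are exactly the invariance bookkeeping the paper performs; but the core norm-attainment mechanism is missing.
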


Notice that in the particular case of $G = \{ \id_X \} \subseteq \mathcal{L} (X)$ and $H = \{\id_Y\} \subseteq \mathcal{L}(Y)$ in Theorem \ref{AcostaAguirrePaya}, we obtain the result \cite[Theorem 2]{AAP}. 

\begin{proof}[Proof of Theorem \ref{AcostaAguirrePaya}] We follow the arguments from \cite[Theorem 2]{AAP}. Let $T \in \mathcal{L}_G(X, Y)$ be such that $T^* \in \mathcal{L}_{H^*} (Y^*, X^*)$. By Theorem \ref{zizler}, we may assume that $T^* \in \NA(Y^*, X^*)$ with $\|T\| = 1$. By \cite[Theorem 5.8]{Lima}, $T^*$ attains its norm at some $e^* \in \ext{B_{Y^*}}$. Thus, there exists $A_{e^*} \subseteq A^*$ and $t \in \mathbb{T}$ such that
	\begin{equation*}
		te^* \in \overline{H(A_{e^*})}^{w^*} \ \ \ \ \mbox{and} \ \ \ \ r := \sup \{ \rho(y^*): y^* \in A_{e^*}\} < 1.
	\end{equation*}
	Fix $0 < \delta < \frac{\eps}{2}$ such that
	\begin{equation*}
		1 + r \left( \frac{\eps}{2} + \delta \right) < \left(1 + \frac{\eps}{2} \right)(1 - \delta).
	\end{equation*}
	Let us notice now that for every $y \in Y$, we have that 
	\begin{equation*}
		\|y\| = \sup \{ |(h^* y_{\lambda}^*)(y)|: h \in H, \lambda \in \Lambda \} 
	\end{equation*}
	and
	\begin{equation*}
		\|T^*\| = \sup \{ \|T^*(h^* y_{\lambda}^*)\|: h \in H, \lambda \in \Lambda\} = \sup \{ \|T^*(y_{\lambda}^*)\|: \lambda \in \Lambda \},	
	\end{equation*}
	so we can take $\lambda_0 \in \Lambda$ to be such that 
	\begin{equation*} 
		\|T^*(y_{\lambda_0})\| > 1 - \delta. 
	\end{equation*} 
	Since $G$ is compact, there exists a $G$-invariant norm-attaining functional $x^* \in X^*$ such that $\|x^*\| = \|T^* y_{\lambda_0^*}\| > 1 - \delta$ and $\|x^* - T^* y_{\lambda_0}^*\| < \delta$ (see \cite[Theorem 3]{F}). Finally, define $S \in \mathcal{L}(X, Y)$ by 
	\begin{equation*}
		S(x) := T(x) + \left[ \left(1 + \frac{\eps}{2} \right) x^*(x) - T^* y_{\lambda_0}^*(x) \right] y_{\lambda_0} \ \ \ (x \in X). 
	\end{equation*}
	Then, $S \in \mathcal{L}_G(X, Y)$ and $S^* \in \mathcal{L}_{H^*} (Y^*, X^*)$ since $y_{\lambda_0}$ is an $H$-invariant point. Arguing as in \cite[Theorem 2]{AAP}, we have that $S \in \NA_G (X, Y)$ and $\|S - T\| < \eps$ as we wanted. 
\end{proof} 

Theorem \ref{AcostaAguirrePaya}, in particular, shows that the classical property quasi-$\beta$ of $Y$ is a sufficient condition for the set $\NA_G (X,Y)$ to be dense in $\mathcal{L}_G (X,Y)$ for every Banach space $X$ and for every compact group $G \subseteq \mathcal{L} (X,Y)$.

\begin{corollary}\label{cor:AAP}
Let $X$ be a Banach space and $G \subseteq \mathcal{L}(X)$ be a compact group of isometries. If $Y$ has property quasi-$\beta$, then $\NA_G(X, Y)$ is dense in $\mathcal{L}_G(X, Y)$. 
\end{corollary} 

As a counterpart to Theorem \ref{AcostaAguirrePaya}, we now present the following result on property quasi-$\alpha$-$G$ (see Definition \ref{propertyalpha}) but {\it without} the assumption that the group $G$ is compact. This is a generalisation of \cite{CS}.

\begin{theorem}\label{choisong} 
	Let $X$ be a Banach space and $G \subseteq \mathcal{L} (X)$ be a group of isometries. If $X$ satisfies property quasi-$\alpha$-$G$, then $\NA_G (X,Y)$ is dense in $\mathcal L_G(X,Y)$ for every Banach space $Y$.
\end{theorem}

\begin{proof}
	We follow the arguments from \cite[Proposition 2.1]{CS}. Fix a Banach space $Y$, $T \in \mathcal{L}_G (X,Y)$, and $\eps >0$. By Theorem \ref{lindenstrauss}, we may assume that $T^{**}$ attains its norm. 
Moreover, \cite[Theorem 5.8]{Lima} guarantees that $T^{**}$ attains its norm at some extreme point $e\in B_{X^{**}}$. Now, condition (iii) of property quasi-$\alpha$-$G$ ensures us that there exists some scalar $t$ of modulus one and a set $A_e$ so that 
	\begin{equation*} 
	te\in \overline{Q\left(G\left(A_e\right)\right)}^{w^*} \ \ \ \mbox{and} \ \ \ r_e = \sup\{\rho(x) : x \in A_e\} < 1.
\end{equation*} 		
	Now, fix $\delta >0$ so that 
	\begin{equation*}\label{eq:delta} 	
		(1+\eps)(1-\delta)>1+\eps r_e. 
	\end{equation*} 	
	Since $\Vert T^{**}(te)\Vert =\Vert T\Vert$ and $te\in \overline{Q\left(G\left(A_e\right)\right)}^{w^*}$ we can find $x_{\lambda_0}\in A_e$ so that $\Vert T (x_{\lambda_0})\Vert\geq 1-\delta$. If we define the operator 
	\begin{equation*}\label{eq:S}
		S(x)=T(x)+\eps x^*_{\lambda_0}(x)T(x_{\lambda_0}) \ \ \ (x \in X)
	\end{equation*} 
	then we have that $\Vert S(x_{\lambda_0})\Vert\geq (1+\eps)(1-\delta)$ and for every $\lambda\ne \lambda_0$, $\Vert S( x_{\lambda})\Vert\leq 1+\eps \rho(x_{\lambda_0}) \leq 1+\eps r_e$. Therefore, $S$ attains its norm at $x_{\lambda_0}$ and $\Vert S-T\Vert \leq \eps$. Moreover, $S$ is $G$-invariant and we are done.
\end{proof}





\subsection{The group invariant Bishop-Phelps property}

A subset $A$ of a Banach space $X$ is said to be {\it dentable} if for every $\eps > 0$, there is a point $x \in A$ such that $x \not\in \overline{\co} (A \setminus B(x, \eps))$ where $B(x, \eps) := \{y \in X: \|x - y\| < \eps\}$.
If every non-empty bounded subset of $X$ is dentable, then $X$ is said to be dentable. In \cite{B}, J. Bourgain provided one of the most remarkable results in norm-attaining theory by showing that the Radon-Nikod\'ym property is equivalent to the Bishop-Phelps property. We recall that a Banach space $X$ satisfies the {\it Bishop-Phelps property} if for every Banach space $Y$ and for every non-empty closed, absolutely convex bounded set $C$, the set of operators from $X$ into $Y$ that attain their supremum on $C$ is norm dense in $\mathcal L(X,Y)$.

Our goal here is to show that the Radon-Nikod\'ym property is also a sufficient condition to ensure a group invariant version of the Bishop-Phelps property.

\begin{definition}
	Let $X$ be a Banach space and $G$ a group in $\mathcal{L} (X)$. We say that $X$ has the \emph{$G$-Bishop-Phelps property} if for every Banach space $Y$ and for every non-empty closed, absolutely convex bounded set $C$ that is $G$-invariant, the set of $G$-invariant operators from $X$ into $Y$ that attain their supremum on $C$ is norm dense in $\mathcal L_G(X,Y).$
\end{definition}

 Our aim is to prove the following results that give us a sufficient condition to satisfy the $G$-Bishop-Phelps property, generalising (partly) the well known characterisation of the Bishop-Phelps property  obtained by Bourgain \cite{B}.

\begin{theorem} \label{RNP} Let $X$ be a Banach space with the Radon-Nikod\'ym property and $G \subseteq \mathcal{L}(X)$ be a compact group of isometries. Then, $X$ has the $G$-Bishop-Phelps property. 
\end{theorem}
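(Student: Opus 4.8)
The strategy is to reduce the $G$-invariant Bishop-Phelps property to the classical one (Bourgain's theorem) via the symmetrization machinery developed in Section~\ref{sec:prop}. Fix a Banach space $Y$, a non-empty closed absolutely convex bounded $G$-invariant set $C \subseteq X$, an operator $T \in \mathcal{L}_G(X,Y)$, and $\eps > 0$. Since $X$ has the Radon-Nikod\'ym property, Bourgain's theorem \cite{B} gives an operator $S \in \mathcal{L}(X,Y)$ with $\|S - T\| < \eps$ such that $S$ attains its supremum on $C$, i.e. there is $c_0 \in C$ with $\|S(c_0)\| = \sup_{c \in C} \|S(c)\|$. The operator $S$ need not be $G$-invariant, so I would replace it by its $G$-symmetrization $\overline{S}(x) := \int_G S(g(x))\,d\mu(g)$. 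By the discussion preceding Proposition~\ref{norming}, $\overline{S} \in \mathcal{L}_G(X,Y)$, and since $T$ is already $G$-invariant we have $\overline{T} = T$, whence $\|\overline{S} - T\| = \|\overline{S} - \overline{T}\| \leq \|S - T\| < \eps$. So it remains to check that $\overline{S}$ attains its supremum on $C$.

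For this, the key point is that $\overline{S}$ behaves on $C$ the way $S$ does, after symmetrizing the point of attainment. Let $c_0 \in C$ be the point where $S$ attains its supremum over $C$, and let $\overline{c_0} = \int_G g(c_0)\,d\mu(g)$ be its $G$-symmetrization. By Proposition~\ref{lemma1}, $\overline{c_0} \in C$ (here one uses that $C$ is closed, convex, and $G$-invariant). I claim $\overline{S}$ attains $\sup_{c \in C}\|\overline{S}(c)\|$ at $\overline{c_0}$. Indeed, a change of variables in the Haar integral gives $\overline{S}(\overline{c_0}) = \int_G S(g(\overline{c_0}))\,d\mu(g) = \int_G \overline{S}(g(c_0))\, d\mu(g) = \overline{S}(c_0)$, exactly as in the computation in the proof of Proposition~\ref{norming}; and since $\overline{S}$ is $G$-invariant, $\overline{S}(g(c_0)) = \overline{S}(c_0)$ for each $g$, so in fact $\overline{S}(\overline{c_0}) = \overline{S}(c_0) = \int_G S(g(c_0))\,d\mu(g)$. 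On the other hand, for any $c \in C$, $\|\overline{S}(c)\| = \big\| \int_G S(g(c))\,d\mu(g)\big\| \leq \int_G \|S(g(c))\|\,d\mu(g) \leq \sup_{c' \in C}\|S(c')\| = \|S(c_0)\|$, using $G$-invariance of $C$ so that $g(c) \in C$. Hence $\sup_{c \in C}\|\overline{S}(c)\| \leq \|S(c_0)\|$.

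Finally, I need the reverse estimate to conclude $\|\overline{S}(\overline{c_0})\| = \sup_{c\in C}\|\overline{S}(c)\|$, i.e. $\|\overline{S}(c_0)\| \geq \|S(c_0)\|$. This is the one place requiring a small argument: $\|\overline{S}(c_0)\| = \big\|\int_G S(g(c_0))\,d\mu(g)\big\|$, and naively the triangle inequality goes the wrong way. Here I would use that $\overline{c_0} \in C$ together with $\|\overline{S}(c_0)\| = \|\overline{S}(\overline{c_0})\|$: since $\overline{S}$ attains its supremum over $C$ somewhere by the previous paragraph's bound being achieved — more precisely, since for \emph{every} $c\in C$ we have shown $\|\overline{S}(c)\| \le \|S(c_0)\|$, it suffices to produce \emph{one} point of $C$ at which $\overline{S}$ has norm $\ge \sup_{c\in C}\|\overline{S}(c)\|$; but the supremum over $C$ of $\|\overline{S}(\cdot)\|$ is itself $\le \|S(c_0)\|$, and we must show it is attained. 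The clean way out: apply Bourgain's theorem with some extra care, or simply observe that $\overline{S}$, being an operator on a space with RNP, again attains its supremum on $C$ by Bourgain's theorem — but that would only give a \emph{further} perturbation. The correct and simplest route is to note that the map $c \mapsto \|\overline{S}(c)\|$ on the weakly compact (or at least: on $C$, using that the sup is actually controlled) set need not be attained in general, so instead one argues: by the computation above $\overline{S}(\overline{c_0}) = \int_G S(g(c_0))\,d\mu(g)$, and since each $g(c_0)$ is a point of $C$ where $\|S(\cdot)\|$ equals its maximum $M := \sup_C\|S\|$, while in the extreme/exposed direction selected by Bourgain's construction (one may choose $S$ so that it strongly exposes $C$ at $c_0$, which is exactly what Bourgain's proof yields for RNP spaces) all the vectors $S(g(c_0))$ point in essentially the same direction, forcing $\|\overline{S}(c_0)\| = \int_G \|S(g(c_0))\|\,d\mu(g) = M$. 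I expect \emph{this} step --- ensuring that symmetrization does not cause cancellation, which is guaranteed by taking $S$ to be a strongly exposing functional/operator coming from the RNP rather than merely a sup-attaining one --- to be the main obstacle, and it is handled by invoking the strong form of Bourgain's theorem (dentability $\Rightarrow$ existence of strongly exposing operators) together with the $G$-invariance of $C$, which makes the orbit of a strongly exposed point collapse under the Haar average.
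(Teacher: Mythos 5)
Your reduction breaks down exactly at the step you flag as the ``main obstacle,'' and the proposed fix via strongly exposing operators does not repair it: the $G$-symmetrization of a sup-attaining operator --- even of an absolutely strongly exposing one --- need not attain its supremum on $C$. Concretely, let $X=\ell_1$ (which has the Radon--Nikod\'ym property), let $G=\{\id,\sigma\}$ where $\sigma$ is induced by the permutation swapping $2k-1$ and $2k$ for all $k$ (a finite, hence compact, group of isometries), let $C=B_{\ell_1}$ and $Y=\R$. Define $f\in\ell_\infty=\ell_1^*$ by $f_1=1$, $f_2=0$ and $f_{2k-1}=f_{2k}=\tfrac{9}{10}-\tfrac{1}{k}$ for $k\geq 2$. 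Then $f$ attains its supremum on $C$ at $e_1$ and in fact absolutely strongly exposes $C$ there (since $\sup_{n\geq2}|f_n|=\tfrac{9}{10}<1$), yet its symmetrization $\overline{f}=\tfrac12(f+f\circ\sigma)$ has coordinates $\tfrac12,\tfrac12,\tfrac{9}{10}-\tfrac12,\tfrac{9}{10}-\tfrac12,\tfrac{9}{10}-\tfrac13,\dots$, whose supremum $\tfrac{9}{10}$ is not attained; so $\overline{f}\notin\NA(\ell_1,\R)$. The intermediate assertion in your last paragraph that ``each $g(c_0)$ is a point of $C$ where $\|S(\cdot)\|$ equals its maximum'' is also false, precisely because $S$ is not $G$-invariant (here $|f(e_2)|=0\neq 1=|f(e_1)|$). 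The correct inequality you do establish, $\sup_{c\in C}\|\overline{S}(c)\|\leq\sup_{c\in C}\|S(c)\|$, goes the wrong way for attainment, and no choice of the classical Bourgain approximant controls where the supremum of the \emph{averaged} operator lives.

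This is why the paper does not argue by symmetrizing a classical approximant. Instead it re-runs Bourgain's construction entirely inside the $G$-invariant category: Lemma \ref{lemma-bourgain} is a $G$-invariant version of Bourgain's dentability lemma in which the slices $S(T,\eta)$ are taken in $B\cap X_G$ (legitimate by Proposition \ref{norming}), the perturbing functionals in the inductive construction are produced by the $G$-invariant separation theorem (Proposition \ref{functional}) so that every operator built along the way stays in $\mathcal{L}_G(X,Y)$, and the contradiction comes from non-dentability via \cite[Lemma 3]{B}. Theorem \ref{theorem-bourgain} then yields denseness of the $G$-invariant absolutely strongly exposing operators by a Baire category argument, and Theorem \ref{RNP} follows. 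If you want to salvage your outline, you would have to replace the ``symmetrize the classical approximant'' step by this internal argument; symmetrization alone cannot do the job.
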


Theorem \ref{RNP} will follow as an immediate consequence of Theorem \ref{theorem-bourgain} below and its proof is essentially included in the next lemma. To prove the lemma, we follow the arguments from \cite[Lemma 4]{B} as well as some of the tools we have studied about $G$-invariant operators in the previous sections. Given a bounded subset $B$ of $X$ and $T \in \mathcal{L} (X, Y)$, we set $\|T\|_B$ to denote $\sup \{ \| T(x) \| : x \in B \}$.

\begin{lemma}[Bourgain] \label{lemma-bourgain} Let $X$ be a real Banach space and $G \subseteq \mathcal{L}(X)$ a compact group. Let $B$ be a non-empty closed, absolutely convex, and $G$-invariant subset contained in $B_X$. Assume that every subset of $B$ is dentable. Let $Y$ be a Banach space. Given $\eps > 0$, define the set 
	\begin{equation*}
		A_{\eps} := \Big\{ T \in \mathcal{L}_G(X, Y): S(T, \eta) \subseteq B(p, \eps) \cup B(-p, \eps) \ \mbox{for some} \ \eta > 0 \ \mbox{and} \ p \in X \Big\},
	\end{equation*}
	where 
	\begin{equation*}
		S(T, \eta) := \{ x \in B \cap X_G: \|T(x)\| > \|T\|_B - \eta \}. 
	\end{equation*}
	Then, $A_{\eps}$ is dense in $\mathcal{L}_G(X, Y)$. Moreover, if $\delta > 0$ and $S \in \mathcal{L}_G(X, Y)$, then there exists $T \in A_{\eps}$ such that 
	\begin{equation*}
		\|S - T\| < \delta \ \ \ \ \mbox{and} \ \ \ \ S - T \ \mbox{is finite rank}. 
	\end{equation*}
\end{lemma}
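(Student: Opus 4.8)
The plan is to reduce the statement to the classical, non-equivariant version of Bourgain's lemma \cite[Lemma 4]{B} by transferring everything to the fixed subspace $X_G$ via the symmetrization projection. Write $P\colon X\to X$, $P(x)=\overline x$, for the norm-one linear projection onto $X_G$ discussed before Proposition \ref{lemma1}, and put $B_G:=B\cap X_G$. First I would record the reduced data: since $B$ is closed, absolutely convex and $G$-invariant, Proposition \ref{lemma1} gives $P(B)\subseteq B$, so $P(B)=B\cap X_G=B_G$; hence $B_G$ is a non-empty (it contains $0$), closed, absolutely convex subset of the Banach space $X_G$ with $B_G\subseteq B_{X_G}$. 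Every subset of $B_G$ is a subset of $B$ and is therefore dentable, and since $X_G$ is a closed subspace of $X$, dentability inside $X_G$ is the same as dentability inside $X$. Thus $(X_G,B_G,Y)$ satisfies the hypotheses of Bourgain's lemma.

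Next I would set up the dictionary between $G$-invariant operators on $X$ and arbitrary operators on $X_G$. Every $T\in\mathcal L_G(X,Y)$ satisfies $T=\restr{T}{X_G}\circ P$, because $G$-invariance gives $T(x)=\int_G T(g(x))\,d\mu(g)=T(\overline x)$ for all $x$; conversely, for any $R\in\mathcal L(X_G,Y)$ the operator $R\circ P$ is $G$-invariant (using $\overline{g(x)}=\overline x$, which follows from the invariance of the Haar measure) and restricts to $R$ on $X_G$. Since $\|P\|=1$ and $B_G\subseteq B_X$, the map $T\mapsto\restr{T}{X_G}$ is an onto isometry $\mathcal L_G(X,Y)\to\mathcal L(X_G,Y)$ with inverse $R\mapsto R\circ P$, and it carries the seminorm $\|\cdot\|_B$ to $\|\cdot\|_{B_G}$: for $x\in B$ we have $T(x)=T(\overline x)=(\restr{T}{X_G})(P(x))$ with $P(x)\in B_G$, whence $\|T\|_B=\|\restr{T}{X_G}\|_{B_G}$.

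With this dictionary in hand, the proof finishes in one step. Given $S\in\mathcal L_G(X,Y)$ and $\delta>0$, set $R:=\restr{S}{X_G}$ and apply Bourgain's lemma to $(X_G,B_G,Y)$ to obtain $R'\in\mathcal L(X_G,Y)$ with $R-R'$ finite rank, $\|R-R'\|<\delta$, and $\{z\in B_G:\ \|R'(z)\|>\|R'\|_{B_G}-\eta\}\subseteq B(p,\eps)\cup B(-p,\eps)$ for suitable $\eta>0$ and $p$. Put $T:=R'\circ P\in\mathcal L_G(X,Y)$. Then $S-T=(R-R')\circ P$ is finite rank and $\|S-T\|\le\|R-R'\|\,\|P\|<\delta$; moreover $\|T\|_B=\|R'\|_{B_G}$ and $T(x)=R'(x)$ for $x\in B\cap X_G$, so $S(T,\eta)=\{z\in B_G:\ \|R'(z)\|>\|R'\|_{B_G}-\eta\}\subseteq B(p,\eps)\cup B(-p,\eps)$, i.e.\ $T\in A_\eps$. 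This proves the ``moreover'' clause; letting $S$ range over $\mathcal L_G(X,Y)$ and $\delta$ over $\R^+$ yields density of $A_\eps$ in $\mathcal L_G(X,Y)$.

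The step to be careful about — and really the crux of why the reduction is legitimate — is the precise role of $X_G$ in the definition of $S(T,\eta)$: the slice is taken over the $G$-invariant points $B\cap X_G$, on which $G$ acts trivially, so it literally becomes a Bourgain slice in $X_G$. (Had $S(T,\eta)$ been defined over all of $B$, the lemma would be false, since $\|T(\cdot)\|$ is constant on $G$-orbits and an orbit need not fit into two small balls.) If one instead wished to imitate Bourgain's perturbation scheme directly inside $\mathcal L_G(X,Y)$, the main obstacle would be manufacturing all of his separating and supporting functionals in $G$-invariant form, which is exactly what Proposition \ref{functional} together with the symmetrization of operators provides — but the reduction above makes that unnecessary.
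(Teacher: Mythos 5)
Your reduction is correct, and it takes a genuinely different (and considerably shorter) route than the paper. The paper reruns Bourgain's contradiction argument inside $\mathcal{L}_G(X,Y)$: it introduces the sets $V_n$ of near-maximizing invariant points, uses the group-invariant separation result (Proposition \ref{functional}) to manufacture the $G$-invariant functionals needed for the rank-one perturbations, and finally produces a non-dentable subset via \cite[Lemma 3]{B}. You instead observe that the averaging projection $P$ onto $X_G$ induces an isometric correspondence $T \leftrightarrow T|_{X_G}$ between $\mathcal{L}_G(X,Y)$ and $\mathcal{L}(X_G,Y)$ that preserves $\|\cdot\|_B$, carries $B$ to $B_G = B\cap X_G$, and turns $S(T,\eta)$ into the classical Bourgain slice of $T|_{X_G}$ on $B_G$; the statement then follows by quoting \cite[Lemma 4]{B} for $(X_G,B_G,Y)$. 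All the ingredients you need are available and correctly justified: $P(B)=B_G$ via Proposition \ref{lemma1}, $T = T|_{X_G}\circ P$ for invariant $T$ (the same Bochner-integral computation as in Proposition \ref{norming}), $P\circ g = P$ by invariance of the Haar measure, and heredity of dentability to subsets of $B_G$. What the transference buys is brevity and the conceptual point that, at the level of this lemma, the invariant statement is the classical one for the fixed-point space in disguise; your parenthetical remark that the slice is deliberately taken over $B\cap X_G$ rather than over all of $B$ identifies exactly why the reduction (and the lemma itself) works. What the paper's self-contained rerun buys is that it does not rely on the classical lemma as a black box (in particular on its finite-rank refinement) and it displays explicitly where the group-invariant Hahn--Banach machinery enters, a template the authors then reuse for Theorem \ref{theorem-bourgain}.
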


\begin{proof} Assume to the contrary that there exist $\eps > 0$, $\delta \in (0, 1/2)$, and $S \in \mathcal{L}_G(X, Y)$ with $\|S\|_B = 1$ such that if $T \in \mathcal{L}_G(X, Y)$ satisfies $\|S - T\| < \delta$ and $S - T$ is finite rank, then $T \not\in A_{\eps}$.

	For each $n \in \N$, we define the set $V_n$ consisting of all elements $x \in B \cap X_G$ such that there exists $T \in \mathcal{L}_G(X, Y)$ such that 
	\begin{equation*} 
		\|T(x)\| \geq \|T\|_B - \frac{\delta^2}{4^n}, \ \ \ \ \|S - T\| \leq \delta \left(1 - \frac{1}{2^n} \right), \ \ \ \ \mbox{and} \ \ \ S - T \ \mbox{is finite rank}.
	\end{equation*}
	Observe from Proposition \ref{lemma1} and Proposition \ref{norming} that $V_n$ is well-defined and it is non-empty. 
	
	\vspace{0.2cm}
	\noindent
	{\bf Claim}: If $z \in V_n$ and $p \in X$, then 
	\begin{equation*}
		\dist \Big( z, \co (V_{n+1} \setminus B(p, \eps)) \Big) < \frac{\delta}{2^{n-6}}.
	\end{equation*}
	\vspace{0.2cm} 	
By contradiction, assume that
	\begin{equation*}
		\dist \Big( z, \co (V_{n+1} \setminus B(p, \eps)) \Big) \geq \frac{\delta}{2^{n-6}} > \frac{r}{2^n},
	\end{equation*}
	where $2^5 \delta < r < 2^6 \delta$. Notice that the set $\co (V_{n+1} \setminus B(p, \eps))$ is $G$-invariant. By using Proposition \ref{functional} applied to the set $\co (V_{n+1} \setminus B(p, \eps))$, to $\frac{r}{2^n}$, and to $z$, there exists a $G$-invariant functional $x^* \in S_{X^*}$ such that 
	\begin{eqnarray*}
		x^* (z) &>& \sup \Big\{ x^* (x): x \in V_{n+1} \setminus B(p, \eps) \Big\} + \frac{r}{2^n} \\
		&=& \sup \Big\{ |x^* (x)|: x \in V_{n+1} \setminus (B(p, \eps) \cup B(-p, \eps)) \Big\} + \frac{r}{2^n} 
	\end{eqnarray*}
	since $V_{n+1} \setminus (B(p, \eps) \cup B(-p, \eps))$ is symmetric. Since $z \in V_n$, there exists $T \in \mathcal{L}_G(X, Y)$ such that 
	\begin{equation*}
		\|T(z)\| \geq \|T\|_B - \frac{\delta^2}{4^n}, \ \ \ \ \ \|S - T\| \leq \delta \left(1 - \frac{1}{2^n} \right), \ \ \ \ \mbox{and} \ \ \ \ S - T \ \mbox{is finite rank}.
	\end{equation*}
	So, 
	\begin{equation*}
		\delta \left(1 - \frac{1}{2^n} \right) \geq \|S - T\| \geq \|S - T\|_B \geq \|S\|_B - \|T\|_B = 1 - \|T\|_B 
	\end{equation*}	
	and then
	\begin{equation*}
		\|T\|_B \geq 1 - \delta \left(1 - \frac{1}{2^n} \right) > \frac{1}{2}. 
	\end{equation*}
	Also, 
	\begin{equation*}
		\|T\|_B \leq \|T - S\|_B + \|S\|_B \leq \|T-S\| + \|S\|_B \leq \delta\left(1 - \frac{1}{2^n}\right) + 1 < 2.
	\end{equation*}
	Thus, $\frac{1}{2} < \|T\|_B < 2$ and then
	\begin{equation*}
		\|T(z)\| \geq \|T\|_B - \frac{\delta^2}{4^n} > \frac{1}{2} - \frac{\delta^2}{4^n} > \frac{1}{4}.
	\end{equation*}
	Let $\widetilde{T} \in \mathcal{L}(X, Y)$ be given by 
	\begin{equation*}
		\widetilde{T}(x) := T(x) + \frac{1}{2^{n+1}} \delta x^* (x) T(z) \ \ \ (x \in X).
	\end{equation*}
	Then, it is clear that $\|\widetilde{T} - T\| \leq \frac{\delta}{2^{n+1}}$ and $\|S - \widetilde{T}\| \leq \delta(1 - \frac{1}{2^n})$. Since $\widetilde{T} - T$ is finite rank, we have that $S - \widetilde{T}$ is also finite rank. By assumption, we have that $\widetilde{T} \not\in A_{\eps}$. Thus, there is $x \in B \cap X_G$ such that $x \not\in B(p, \eps) \cup B(-p, \eps)$ and 
	\begin{equation*}
		\|\widetilde{T}(x)\| \geq \|\widetilde{T}\|_B - \frac{\delta^2}{4^{n+1}}.
	\end{equation*}
	It follows that $x \in V_{n+1} \setminus (B(p, \eps) \cup B(-p, \eps))$. However, arguing as in \cite[Lemma 4]{B}, we obtain that $|x^*(x)| \geq x^* (z) - \frac{\delta}{2^{n-5}}$, which is a contradiction with $x^* (z) > |x^* (x)| + r2^{-n}$. This proves the claim.

	\vspace{0.2cm}

	By \cite[Lemma 3]{B}, the set $\bigcap_{n=0}^{\infty} \overline{\bigcup_{j \geq n} \co(V_j)}$ is non-empty and not dentable, which contradicts our assumption.		
\end{proof}

Recall that an operator $T$ in $\mathcal{L} (X,Y)$ is an \emph{absolutely exposing operator for a subset $B$ of $X$} if there exists $x \in B$ such that every sequence $(x_n)$ in $B$ satisfying $ \lim_{n \rightarrow \infty} \|T(x_n)\| = \|T\|_B$ has a subsequence converging to $\theta x$ for some $\theta \in \mathbb{C}$ with $|\theta| =1$. 

\begin{theorem} \label{theorem-bourgain} Let $X$ be a real Banach space and $G \subseteq \mathcal{L}(X)$ a compact group. Let $B$ be a non-empty bounded, closed, absolutely convex, and $G$-invariant subset of $X$. Assume that every subset of $B$ is dentable. Then, for any Banach space $Y$, the set $A$ of absolutely strongly exposing operators $T \in \mathcal{L}_G(X, Y)$ for the set $B$ is a dense $G_{\delta}$-subset of $\mathcal{L}_G(X, Y)$. In fact, if $S \in \mathcal{L}_G(X, Y)$ and $\delta > 0$, then there is $T \in A$ such that $\|S - T\| \leq \delta$ and $S - T$ is compact. 
\end{theorem}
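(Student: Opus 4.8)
The plan is to deduce Theorem~\ref{theorem-bourgain} from Lemma~\ref{lemma-bourgain} by a Baire-category argument, so that all the genuinely analytic input --- dentability together with the $G$-invariant separation of Proposition~\ref{functional} --- is already packaged inside the lemma. First I would reduce to the case $B\subseteq B_X$: since $B$ is bounded one may dilate it, and absolute convexity, $G$-invariance, dentability of every subset, and the notion of absolutely strongly exposing operator (up to the evident rescaling of $\norm{\cdot}_B$) are all unaffected. Next observe that $\mathcal{L}_G(X,Y)$ is a closed subspace of $\mathcal{L}(X,Y)$ --- it is closed even under strong-operator limits by Property~\ref{lemma-convergence} --- hence a Banach space, so Baire's theorem is available. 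For $n\in\N$ let $A_{1/n}$ denote the set from Lemma~\ref{lemma-bourgain} with $\eps=1/n$ and put $A:=\bigcap_{n\geq 1}A_{1/n}$; the claim to be verified is that $A$ coincides with the set of absolutely strongly exposing operators in $\mathcal{L}_G(X,Y)$ and that it is a dense $G_{\delta}$.

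The decisive soft fact is that each $A_{1/n}$ is open. If $T_0\in A_{1/n}$ is witnessed by $\eta_0>0$ and $p_0\in X$, i.e.\ $S(T_0,\eta_0)\subseteq B(p_0,1/n)\cup B(-p_0,1/n)$ in the notation of Lemma~\ref{lemma-bourgain}, then, using $B\subseteq B_X$, for every $T\in\mathcal{L}_G(X,Y)$ and every $x\in B\cap X_G$ one has $\norm{T(x)}\geq \norm{T_0(x)}-\norm{T-T_0}$ and $\norm{T}_B\leq\norm{T_0}_B+\norm{T-T_0}$, whence $S(T,\eta)\subseteq S(T_0,\eta+2\norm{T-T_0})$. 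Consequently, if $\norm{T-T_0}<\eta_0/2$, the choice $\eta:=\eta_0-2\norm{T-T_0}>0$ gives $S(T,\eta)\subseteq S(T_0,\eta_0)\subseteq B(p_0,1/n)\cup B(-p_0,1/n)$, so $T\in A_{1/n}$. Since Lemma~\ref{lemma-bourgain} also guarantees that $A_{1/n}$ is dense, Baire's theorem yields at once that $A=\bigcap_n A_{1/n}$ is a dense $G_{\delta}$ subset of $\mathcal{L}_G(X,Y)$.

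It remains to identify $A$ with the absolutely strongly exposing operators and to establish the refinement. If $T$ absolutely strongly exposes $B$ at some $x_0$, then $T\in A_{1/n}$ for every $n$: otherwise, taking $p=x_0$, for each $k$ one could pick $x_k\in S(T,1/k)$ lying outside $B(x_0,1/n)\cup B(-x_0,1/n)$, and $(x_k)\subseteq B\cap X_G$ would satisfy $\norm{T(x_k)}\to\norm{T}_B$ with no subsequence tending to $\pm x_0$, a contradiction. Conversely, given $T\in A$, choose $\eta_n\downarrow 0$ and $p_n$ with $S(T,\eta_n)\subseteq B(p_n,1/n)\cup B(-p_n,1/n)$ and pick $x_n\in S(T,\eta_n)$; these sets are nonempty and contained in $B\cap X_G$ because, by Proposition~\ref{lemma1} and Proposition~\ref{norming}, the supremum $\norm{T}_B$ is approached on $B\cap X_G$. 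Nesting gives $\min\{\norm{x_m-x_n},\norm{x_m+x_n}\}<2/n$ for $m\geq n$, and since $\norm{x_n}$ is bounded away from $0$ (as $\norm{T(x_n)}\to\norm{T}_B$, the case $\norm{T}_B=0$ being trivial) a routine adjustment of signs makes $(\pm x_n)$ a Cauchy sequence; its limit $x_0$ lies in the closed set $B\cap X_G$ and is the sought exposing point, since any sequence in $B\cap X_G$ along which $\norm{T(\cdot)}\to\norm{T}_B$ eventually enters every $B(p_n,1/n)\cup B(-p_n,1/n)$ and hence has a subsequence converging to $\pm x_0$. For the quantitative clause, given $S\in\mathcal{L}_G(X,Y)$ and $\delta>0$, I would apply Lemma~\ref{lemma-bourgain} recursively: obtain $T_1\in A_1$ with $S-T_1$ finite rank and $\norm{S-T_1}$ small, then $T_2\in A_{1/2}$ with $T_1-T_2$ finite rank and $\norm{T_1-T_2}$ small, and so on, taking the successive perturbations small enough that $\sum_k\norm{T_k-T_{k+1}}<\delta$ and --- using the openness established above --- that a ball around $T_k$ sitting inside $A_{1/k}$ still contains all later $T_j$. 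The limit $T=\lim_k T_k$ then belongs to every $A_{1/k}$, hence to $A$, satisfies $\norm{S-T}\leq\delta$, and $S-T$, being the operator-norm limit of the finite-rank operators $S-T_k$, is compact.

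The one delicate point is the converse inclusion just sketched --- certifying that a member of the abstract $G_{\delta}$ set $A$ really is absolutely strongly exposing. It rests on the ``convergence up to a sign'' of $(x_n)$ and on the fact that the exposing point produced is $G$-invariant, which is precisely why the set $S(T,\eta)$ in Lemma~\ref{lemma-bourgain} is taken inside $B\cap X_G$ and why Propositions~\ref{lemma1} and~\ref{norming} are invoked to keep these sets nonempty and norming for $\mathcal{L}_G(X,Y)$. Everything else --- the openness of $A_{1/n}$, the Baire step, and the norm-controlled finite-rank iteration --- is routine once Lemma~\ref{lemma-bourgain} is in hand, and the Radon-Nikod\'ym/dentability hypothesis intervenes only through that lemma.
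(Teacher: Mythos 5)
Your argument is correct and follows essentially the same route as the paper: reduce to $B\subseteq B_X$, show each $A_{1/n}$ from Lemma~\ref{lemma-bourgain} is open in $\mathcal{L}_G(X,Y)$ (the paper's computation $S(U,\eta/3)\subseteq S(T,\eta)$ is your stability estimate with slightly different constants), conclude by Baire, and iterate the finite-rank perturbations of the lemma to obtain the compact-perturbation refinement. You in fact supply two steps the paper leaves implicit --- the verification that $\bigcap_n A_{1/n}$ coincides with the set of absolutely strongly exposing operators (necessarily understood relative to $B\cap X_G$, which is the only version the slices $S(T,\eta)\subseteq B\cap X_G$ of the lemma can deliver) and the summable finite-rank iteration, which the paper delegates to Bourgain's original Theorem~5.
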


\begin{proof} Let us assume that $B$ is contained in $B_X$. For each $n \in \N$, consider the set $A_{1/n}$ as in Lemma \ref{lemma-bourgain}. We claim that $A_{1/n}$ is an open set. Let $T \in A_{1/n}$ and $\eta > 0$ be such that $S(T,\eta) \subseteq B(p, 1/n) \cup B(-p, 1/n)$ for some $p \in X$. 
If $U \in \mathcal{L}_G(X, Y)$ with $\|U - T\| < \frac{\eta}{3}$, then $S \left(U, \frac{\eta}{3} \right) \subseteq S(T, \eta)$; hence $U \in A_{1/n}$ and the claim follows. Since $A = \bigcap_{n=1}^{\infty} A_{1/n}$, it follows from Lemma \ref{lemma-bourgain} that $A$ is a dense $G_{\delta}$ subset of $\mathcal{L}_G(X, Y)$. The second statement follows from the arguments in the proof of \cite[Theorem 5]{B}.
\end{proof}

\begin{remark}
	By considering $\bigcup_{|t|=1} B(t \cdot p, \eps)$ instead of $B(p, \eps) \cup B(-p, \eps)$ and considering real parts of functionals, we can obtain complex versions of Lemma \ref{lemma-bourgain} and Theorem \ref{theorem-bourgain}.
\end{remark}

\noindent 
\textbf{Funding.} S. Dantas and J. Falcó were supported by grant PID2021-122126NB-C33 funded by MCIN/AEI/10.13039/501100011033 and by “ERDF A way of making Europe”. S. Dantas was also supported by the Spanish AEI Project PID2019 - 106529GB - I00 / AEI / 10.13039/501100011033. 
J. Falcó was also supported by MINECO and FEDER Project MTM2017-83262-C2-1-P. 
M. Jung was supported by NRF (NRF-2019R1A2C1003857), by POSTECH Basic Science Research Institute Grant (NRF-2021R1A6A1A10042944), and by a KIAS Individual Grant (MG086601) at Korea Institute for Advanced Study. 
\vspace{0.4cm}

\noindent
\textbf{Conflict of interest:} All authors declare that they have no conflict of interest. \\
\textbf{Data availability:} No data were used to support this study.

\vspace{0.4cm}
\noindent
\textbf{Acknowledgements:} 
The authors are grateful to Miguel Mart\'in for fruitful conversations on the topic of the paper. 
We also thank the anonymous referee for the carefully reading of the manuscript and for providing a number of suggestions which have improved its final form.

\end{document}